\pgfplotsset{compat=1.18}
\pgfplotsset{%
    ,compat=1.13
    ,colormap={myorange}{rgb255(0cm)=(255,153,51); rgb255(1cm)=(255,153,51)}
    ,colormap={myblue}{rgb255(0cm)=(0,0,255); rgb255(1cm)=(0,0,255)}
    ,colormap={myred}{rgb255(0cm)=(255,0,0); rgb255(1cm)=(255,0,0)}
}
\newcommand{\tangent}[2]{\mathrm{T}_{#1}{#2}}
\newcommand{\normal}[2]{\mathrm{N}_{#1}{#2}}
\newcommand{\Der}{\operatorname{D}}
\newcommand{\dist}{\operatorname{dist}}
\newcommand{\C}{\mathrm{C}}
\newcommand{\maxEigen}{\lambda_{\max}}
\newcommand{\minEigen}{\lambda_{\min}}
\newcommand{\maxSigma}{\sigma_{\max}}
\newcommand{\loc}{\mathrm{loc}}
\newcommand{\Gr}{\operatorname{Gr}}
\def\eqref#1{equation~\ref{#1}}
\def\1{\bm{1}}
\DeclareMathAlphabet{\mathsfit}{\encodingdefault}{\sfdefault}{m}{sl}
\SetMathAlphabet{\mathsfit}{bold}{\encodingdefault}{\sfdefault}{bx}{n}
\newcommand{\tens}[1]{\bm{\mathsfit{#1}}}
\def\tA{{\tens{A}}}
\def\tB{{\tens{B}}}
\def\tC{{\tens{C}}}
\def\tT{{\tens{T}}}
\def\gC{{\mathcal{C}}}
\def\gK{{\mathcal{K}}}
\def\gM{{\mathcal{M}}}
\def\gN{{\mathcal{N}}}
\def\gO{{\mathcal{O}}}
\def\gS{{\mathcal{S}}}
\def\gU{{\mathcal{U}}}
\def\gV{{\mathcal{V}}}
\def\gX{{\mathcal{X}}}
\def\gZ{{\mathcal{Z}}}
\newcommand{\R}{\mathbb{R}}
\DeclareMathOperator*{\argmin}{arg\,min}
\DeclareRobustCommand\onedot{\futurelet\@let@token\@onedot}
\def\@onedot{\ifx\@let@token.\else.\null\fi\xspace}
\def\eg{\emph{e.g}\onedot} 
\def\ie{\emph{i.e}\onedot}
\newcommand{\denselist}{\itemsep -2 pt\partopsep -5pt}
\newtheoremstyle{jmlrstyle}%
  {8pt}
  {8pt}
  {\itshape}
  {}
  {\bfseries}
  {}
  { }
  {%
    \textbf{#1}\ \textbf{#2}%
    \if\relax\detokenize{#3}\relax
    \else\ \textbf{(#3)}%
    \fi
    \hspace{0.5em}%
  }
\theoremstyle{jmlrstyle}
\newtheorem{theorem}{Theorem}
\newtheorem{corollary}{Corollary}
\newtheorem{lemma}{Lemma}
\newtheorem{definition}{Definition}
\newtheorem{remark}{Remark}
\newtheorem{assumption}{Assumption}
\newtheorem*{notation}{Notation}
\renewenvironment{proof}[1][\proofname]{%
  \par\pushQED{\qed}%
  \normalfont \topsep6\p@\@plus6\p@\relax
  \trivlist
  \item[\hskip\labelsep \bfseries #1\hspace{0.5em}]\ignorespaces
}{%
  \popQED\endtrivlist\@endpefalse
}
\title{Sharper Convergence Rates for Nonconvex Optimisation via Reduction Mappings}
\author{%
  Evan Markou\\
  Australian National University\\
  \texttt{evan.markou@anu.edu.au} \\
  \And
  Thalaiyasingam Ajanthan \\
  Australian National University \& Pluralis Research \\
  \texttt{thalaiyasingam.ajanthan@anu.edu.au} \\
  \AND
  Stephen Gould \\
  Australian National University\\
  \texttt{stephen.gould@anu.edu.au} \\
}
\begin{document}

\maketitle

\begin{abstract}
Many high-dimensional optimisation problems exhibit rich geometric structures in their set of minimisers, often forming smooth manifolds due to over-parametrisation or symmetries. When this structure is known, at least locally, it can be exploited through reduction mappings that reparametrise part of the parameter space to lie on the solution manifold. These reductions naturally arise from inner optimisation problems and effectively remove redundant directions, yielding a lower-dimensional objective. In this work, we introduce a general framework to understand how such reductions influence the optimisation landscape. We show that well-designed reduction mappings improve curvature properties of the objective, leading to better-conditioned problems and theoretically faster convergence for gradient-based methods. Our analysis unifies a range of scenarios where structural information at optimality is leveraged to accelerate convergence, offering a principled explanation for the empirical gains observed in such optimisation algorithms.
\end{abstract}

\section{Introduction}
\label{sec:introduction}
First-order gradient methods are the workhorse for large-scale optimisation in machine learning and data science due to their simplicity and scalability. However, the objective functions encountered in these settings often exhibit highly non-convex and intricate loss landscapes, stemming from various factors such as over-parametrisation, compositions of nonlinear functions, and underlying data distributions, to name a few~\cite{choromanska2015loss, kawaguchi2016deep, panageas2016gradient, du2018gradient}. Despite this complexity, gradient-based methods perform remarkably well in practice, achieving local linear convergence under mild regularity conditions---such as the widely studied Polyak-{\L}ojasiewicz (\ref{eqn:PL}) condition~\cite{polyak1963gradient, LIU2022losslandscapesandoptimisation, bassily2018exponential,karimi2016linear, absil2005convergence}. This apparent tension between theoretical difficulty and empirical success motivates a deeper understanding of the geometry of loss landscapes and its role in shaping optimisation dynamics.

Let us consider the geometry of the solution spaces, which are shown to have manifold-like structures, rather than isolated points for many machine learning problems due to over-parametrisation, symmetries, or latent invariances~\cite{cooper2021globalminima, LIU2022losslandscapesandoptimisation}.
Such structured sets of minimisers are not merely theoretical curiosities---they naturally arise in a variety of real-world applications. For instance, in deep neural networks, over-parametrisation often leads to entire manifolds of local minima~\cite{cooper2021globalminima} related by symmetries~\cite{entezari2021role, dinh2017sharp,neyshabur2015path}. Furthermore, recent findings in neural collapse~\cite{papyan2020neuralcollapse} reveal that optimal solutions often exhibit highly regular, symmetric, low-dimensional structures across layers~\cite{sukenik2023deep, sukenik2024neural, garrod2024persistence, rangamani2023intermediateNC, tirer2022extended,fickus2018equiangular}. 
In matrix factorisation problems~\cite{chi2019nonconvex} such as dictionary learning~\cite{sun2016completeI, sun2016completeII, gilboa2019efficient, qu2019analysis}, low-rank matrix completion~\cite{ge2017no, ge2016matrix}, and tensor decomposition problems~\cite{ge2017optimization, ge2015escaping}, solutions are only identifiable up to scaling or orthogonal transformations~\cite{zhang2020symmetry}, reflecting the inherent symmetries of these models. Similar invariances exist in problems such as phase retrieval~\cite{candes2015phase, candes2015bphase, sun2018geometric, fannjiang2020numerics} and blind deconvolution~\cite{ling2017blind, zhang2018structured, kuo2019geometry, lau2019short, li2018global, qu2019nonconvex}.
These symmetries induce structured sets of minimisers, often forming smooth manifolds or discrete equivalence classes, reflecting invariance under these transformations~\cite{zhang2020symmetry}. These examples all share a common theme: {\em the objective exhibits invariances that endow the set of solutions with rich geometric structure.}

In this work, we study how the geometric structure of the solution set can be systematically exploited to accelerate convergence. We focus on reduction mappings---reparametrisations that encode known components of the solution manifold, and study their effect on the local curvature. In practice, these reductions often arise from inner optimisation problems, thereby reformulating the original problem into a bi-level optimisation problem and yielding a lower-dimensional objective. While intuitively promising, not all such reduction mappings are beneficial. Even if optimal in value, a poorly designed reduction can distort local curvature and hinder convergence. We develop a rigorous framework to identify when and how these mappings lead to provable improvements in outer iteration complexity for gradient descent.

To build geometric intuition, Figure~\ref{fig:geometric-intuition} illustrates how different reduction mappings affect the local curvature of the objective along their respective subspaces. For more details on this example, refer to Appendix~\ref{appdx:a-gentle-start}. While some mappings reveal a well-conditioned, flattened profile, others retain or even exaggerate steep curvature. This highlights a central idea of our work: {\em the convergence behaviour of gradient methods depends not just on the presence of structure, but on how effectively it is incorporated into the optimisation process.}

\begin{figure}[t!]
\centering
\begin{tikzpicture}
  \begin{axis}[
    view={30}{30},            
    xlabel={$x_1$},
    ylabel={$x_2$},
    domain=-5:5,
    y domain=-5:5,
    samples=30,
    grid=none,
    colormap/cool,
    width=9.5cm,
    height=8cm,
    enlargelimits=true,
    axis line style={draw=none}, 
    xtick=\empty,               
    ytick=\empty,               
    ztick=\empty,               
    legend style={
      draw=none,
      fill=none,
      font=\scriptsize,
      legend columns=1,
      column sep=0.7cm,
      legend cell align={left},
      at={(1,0.5)},
      anchor=west
    }
  ]

    \addplot3[
      patch,
      patch type=rectangle,
      draw=none,
      fill=lightgray,
      opacity=0.8,
      shader=flat
    ]
    coordinates {
      (-5, -5, 0)
      (-5,  5, 0)
      ( 5,  5, 0)
      ( 5, -5, 0)
    };

    \addplot3[
      surf,
      shader=interp,
      opacity=0.8,
      domain=-5:5,
      y domain=-5:5,
      samples=30
    ]
    {x^2 + 2*(y - x)^2 + 50};

    
    \addplot3[
      domain=-5:5,
      y domain=0:0,
      samples=20,
      thick,
      blue!90!black,
    ]
    ({x},{x},{x^2 + 50});

    \addplot3[
      domain=-5:5,
      y domain=0:0,
      samples=20,
      dashed,
      thick,
      blue!90!black,
    ]
    ({x},{x},{0});

    \addplot3[
      domain=-5:5,
      y domain=0:0,
      samples=20,
      thick,
      orange!90!black,
    ]
    ({x},{0},{3*x^2 + 50});

    \addplot3[
      domain=-5:5,
      y domain=0:0,
      samples=10,
      dashed,
      thick,
      orange!90!black,
    ]
    ({x},{0},{0});

    \addplot3[
      domain=-5:5,
      y domain=0:0,
      samples=50,
      thick,
      YellowGreen!5!black,
    ]
    ({x},{x + 2*sin(deg(x))},{x^2 + 2*(2*sin(deg(x)))^2 + 50});

    \addplot3[
      domain=-5:5,
      y domain=0:0,
      samples=50,
      dashed,
      thick,
      YellowGreen!5!black,
    ]
    ({x},{x + 2*sin(deg(x))},{0});
    
    \addplot3[
      only marks,
      mark=*,
      mark options={scale=1, fill=black, draw=black},
      forget plot
    ]
    coordinates {(0,0,0)} node[below] {$\gS$};
    
    \addplot3[
      only marks,
      mark=*,
      mark options={scale=1, fill=black, draw=black},
      forget plot
    ]
    coordinates {(0,0,50)};
    
    \addplot3[
      dotted,         
      line width=0.5pt,
      black,
      forget plot
    ]
    coordinates {
      (0,0,0)
      (0,0,50)
    };

    \legend{
    \text{Ambient Surface of \text{$f$}}, \text{$f(x_1,x_2)=x_1^2 + 2(x_2 - x_1)^2$},
    \text{$F_1(x_1) = f(x_1, \Psi_1(x_1))$}, \text{$\gM_{F_1} = \{(x_1,x_1): x_1\in\R\}$},
    \text{$F_2(x_1) = f(x_1, \Psi_2(x_1))$}, \text{$\gM_{F_2} = \{(x_1,0): x_1\in\R\}$},
    \text{$F_3(x_1) = f(x_1, \Psi_3(x_1))$}, \text{$\gM_{F_3} = \{(x_1,x_1 + 2\sin(x_1)): x_1\in\R\}$},
    }
    
  \end{axis}
\end{tikzpicture}\caption{%
Illustration of how well-designed reduction mappings iron out worst‑case curvature.  
The opaque surface depicts the graph of the function $f : \R^2 \to \R$, lifted above the ambient domain (grey plane) for visualisation purposes. The function has a single global minimum $\gS = \{(0,0)\}$. In general, $\gS$ can be a set of non-isolated points.
The \textcolor{blue!90!black}{blue} curve shows the restriction of $f$ along the mapping $\Psi_1: x_1 \mapsto x_1$, where the high-curvature quadratic component cancels, yielding a flatter profile in $x_1$.
In contrast, the \textcolor{orange!90!black}{orange} curve corresponds to the mapping $\Psi_2: x_1 \mapsto 0$, which preserves most of the steep curvature of $f$.
The \textcolor{YellowGreen!5!black}{grey-green} curve traces the restriction along a nonlinear sinusoidal mapping---an example of a poorly designed reduction, which introduces additional curvature into the problem.
Dashed curves on the ambient domain represent the images of these mappings as one-dimensional submanifolds $\gM$. The submanifolds have a non-empty intersection with $\gS$.}
\label{fig:geometric-intuition}
\end{figure}

\subsection{Contributions}
We provide a systematic characterisation of when reduction mappings lead to provable gains in optimisation efficiency. Specifically, we identify conditions under which the reduced objective exhibits a strictly smaller smoothness constant and a strictly larger \emph{sharpness} constant\footnote{A general term encompassing Polyak-{\L}ojasiewicz (\ref{eqn:PL}), quadratic growth (\ref{eqn:QG}), and related properties.}. Together, these improvements yield a strictly better condition number, leading to faster worst-case convergence rates for gradient-based methods applied to the reduced problem. The results hold under standard regularity assumptions and apply to both affine and nonlinear mappings. Specifically we contribute the following:
\begin{itemize}
    \denselist
    \item Theorem~\ref{thm:better-smoothness-affine} shows that for affine reduction mappings, the smoothness constant of the reduced objective is strictly smaller than that of the full objective.
    \item Theorem~\ref{thm:better-smoothness-nonlinear} extends this result to general nonlinear mappings, establishing improved smoothness under mild regularity assumptions.
    \item Theorem~\ref{thm:mb-constant-strict-improvement} demonstrates that the sharpness constant of the reduced problem is strictly larger, implying stronger curvature near the minimisers.
    \item Corollary~\ref{cor:faster-linear-convergence-GD} combines the above to show that the condition number of the reduced problem is strictly better, leading to faster convergence of gradient descent under the P{\L} condition.
\end{itemize}
For completeness, we include a number of supporting lemmas and additional theoretical results in the appendices.
\subsection{Related Work}
Reparametrisations and geometry-aware optimisation have long been used to exploit structure and improve conditioning in nonconvex problems~\cite{levin2025effect}. Examples include normalisation methods~\cite{ioffe2015batch, ba2016layer}, which can be interpreted as explicit reduction mappings improving feature geometry and convergence~\cite{santurkar2018does}; neural collapse~\cite{papyan2020neuralcollapse} formulations, where our framework captures both fixed classifier parametrisations~\cite{zhu2021geometric} and dynamic equiangular tight frame (ETF) projections~\cite{markou2024guiding}; and gauge-fixing strategies in problems with symmetries, offering a lightweight alternative to quotient manifold optimisation~\cite{absil2008optimization, boumal2023introduction}. Preconditioned methods also relate closely to our approach, with connections to natural gradient descent~\cite{amari1998natural, zhang2019fast}, adaptive preconditioning~\cite{gupta2018shampoo, jordan2024muon}, and studies of parameter space geometry under reparametrisation~\cite{kristiadi2023geometry}. Our framework generalises these by treating reduction mappings as intrinsic geometry-aware preconditioners, unifying and extending prior work on preconditioning and geometry adaptation. Finally, classical variable elimination and bilevel optimisation methods~\cite{bertsekas1997nonlinear, gould2016differentiating} can be seen as special cases of reduction mappings, where our analysis goes beyond dimensionality reduction to provide precise improvements in conditioning and convergence rates. A more detailed discussion is provided in Appendix~\ref{appdx:ext-related}.

\section{Setup, Notation, and Preliminaries}
\label{sec:setup-and-notation}
We consider general unconstrained optimisation problems of the form
\begin{equation}
\underset{x \in \R^n}{\mathrm{minimise}}\, f(x)\,,
\label{eqn:main-function}
\end{equation}
where $f:\R^n \to \R$ is a $C^2$ (twice differentiable), possibly non-convex function. We assume that the set of minimisers of $f$ is not discrete, but instead forms a non-isolated set. Specifically, we define the set of all local minima in a neighbourhood as
\begin{equation}
    \gS = \{x\in\R^n: \text{$x$ is a local minimum of $f$ and } f(x) = c\}\,.
    \label{eqn:solution-manifold}
\end{equation}
Without loss of generality, we assume the minimum of $f$ is zero, \ie, $c=0$. We further assume that, locally around any minimiser, the function $f$ satisfies the P{\L} condition with constant $\mu > 0$ ($\mu$-P{\L}), that is,
\begin{equation}
    f(x) \leq \frac{1}{2\mu}\|\nabla f(x)\|^2\,.
    \label{eqn:PL}
    \tag{P\L}
\end{equation}
Other related conditions commonly imposed in non-convex optimisation include the quadratic growth (\ref{eqn:QG}) condition~\cite{bonnans1995quadraticgrowth} and the error bound (\ref{eqn:EB}) condition~\cite{luo1993error}. These can be defined as follows, where $\dist$ is the classical Euclidean distance: $f$ satisfies the quadratic growth condition with $\mu>0$ around a minimum if 
\begin{equation}
    \label{eqn:QG}
    f(x) \geq \frac{\mu}{2}\dist^2(x,\gS)\,.
    \tag{QG}
\end{equation}
Also, $f$ satisfies the error bound condition with $\mu>0$ around a minimum if 
\begin{equation}
    \label{eqn:EB}
    \mu \dist(x,\gS) \leq \|\nabla f(x)\|\,.
    \tag{EB}
\end{equation}
Numerous works derive convergence rates for gradient-based methods under these assumptions~\cite{anitescu2000degenerate, necoara2019linear, liu2015asynchronous, drusvyatskiy2018error}. Another important condition is the Morse–Bott property~\cite{bott1954nondegenerate,bott1982lecturesmorse,rebjock2024fastconvergence}, which generalises the classical Morse theory framework~\cite{milnor1969morsetheory}. While Morse functions require all critical points to be isolated and non-degenerate, the Morse–Bott condition relaxes this by allowing the set of critical points to form smooth manifolds, as long as the Hessian is non-degenerate in directions normal to these manifolds~\cite{bott1954nondegenerate}. This setting is particularly relevant in optimisation problems where symmetries or invariances naturally give rise to non-isolated minimisers lying on structured sets and hence singular Hessians. 

\begin{definition}[Morse--Bott Property]
    Let $\bar{x}$ be a local minimiser of $f$ with associated set of minimisers $\gS$ and $\tangent{\bar{x}}{\gS}$ be the tangent space of $\gS$ at $\bar{x}$. Then $f$ satisfies the Morse--Bott property at $\bar{x}$ if 
    \begin{equation}
        \gS \,\text{ is a $C^1$ submanifold around $\bar{x}$}\qquad\text{and}\qquad \ker\nabla^2f(\bar{x}) = \tangent{\bar{x}}{\gS}\,.\tag{MB}
        \label{eqn:MB}
    \end{equation}
 Furthermore, if there exists a uniform $\mu > 0$ such that for all vectors $v$ normal to $\gS$ one has  
\[
\langle v, \nabla^2 f(\bar{x})\,v \rangle \ge \mu\, \|v\|^2\,,
\]  
then we say that $f$ satisfies the $\mu$-MB property.
\end{definition}

Notably, for $C^2$ functions, it has been shown~\cite{rebjock2024fastconvergence} that these conditions are essentially equivalent---up to potential degradations in the constants or reductions in the neighbourhoods where they hold. Therefore, throughout this work, assuming any one of these conditions allows us to invoke the others interchangeably. Under (\ref{eqn:MB}), we refer to set of minimisers $\gS$ as the \emph{solution manifold}.

\begin{assumption}[Standing Assumptions on Function $f$]
\label{assum:main}
Suppose there exist constants $L, \beta, \mu > 0$ and a compact neighbourhood  $\gN \subset \R^n$ around a minimiser $\bar{x}$ such that the following hold:
\begin{enumerate}
    \denselist
    \item \textbf{(Interpolation)} The function  $f$ attains its infimum at zero, and the set of minimisers $\gS$ is non-empty.
    \item \textbf{(Smoothness)} The function $f$ is $L$-Lipschitz continuous and $\beta$-smooth on $\gN$; that is, its gradient is $\beta$-Lipschitz continuous on $\gN$.
    \item \textbf{(P{\L} condition)} The function $f$ satisfies the (\ref{eqn:PL}) condition at every point in $\gN$.
\end{enumerate}
\end{assumption}

\subsection{Reparametrisation via Reduction Mappings}
We begin by decomposing the variable $x$ of the function $f$ into two components, $x_1\in\R^{n_1}$ and $x_2\in\R^{n_2}$, such that $n=n_1+n_2$, with $n_1 < n$. Whenever the problem structure or prior knowledge permits, we assume that the component $x_2$ exhibits a known geometric structure at optimality. This allows us to introduce an appropriate mapping---typically a projection or an implicit parametrisation---that explicitly encodes this structure by setting $x_2$ as a function of $x_1$. This leads to a reduction mapping, where $x_2$ is fixed to lie on its optimal structure, enabling us to reformulate the problem by optimising only over the remaining variables $x_1$.

\begin{definition}[Reduction Mapping and Reduced Function]
\label{def:reduction}
Let $\Psi: \R^{n_1}\to \R^{n_2}$ be a $C^2$ mapping representing the known geometric structure of $x_2$ at optimality. We define the \emph{reduction mapping} $\Phi: \R^{n_1}\to\R^{n}$ as
\begin{equation}
    \Phi(x_1) \coloneqq \bigl(x_1, \Psi(x_1)\bigr)\,.
    \label{eqn:reduction-mapping}
\end{equation}
We then define the reduced objective $F:\R^{n_1}\to \R$ as the pullback of $f$ along $\Phi$ as
\begin{equation}
    F(x_1)\coloneqq f\bigl(\Phi(x_1)\bigr) = f\bigl(x_1, \Psi(x_1)\bigr)\,.
\end{equation}
Since $f$ and $\Phi$ are $C^2$, the reduced function $F$ is also $C^2$.
\end{definition}

\begin{definition}[Graph Manifold]
\label{def:graph-manifold}
We define the graph manifold of the reduction mapping $\Phi$ as
\begin{equation}
    \gM_F \coloneqq \{\Phi(x_1) = \bigl(x_1, \Psi(x_1)\bigr)\mid x_1\in\R^{n_1}\}\,.
    \label{eqn:feasible-manifold}
\end{equation}
Assuming $\Psi$ is $C^2$, the manifold $\gM_F$, which we will refer to as the \emph{feasible manifold}, is a globally embedded $C^2$ submanifold of $\R^n$ of dimension $n_1$, since $\Phi$ is globally injective and $C^2$.
\end{definition}

\begin{assumption}[Standing Assumptions on the Reduced Function $F$]
\label{assum:reduced-problem}
Assume the following hold within a compact neighbourhood $\gN \subset \R^{n}$ defined around a minimiser $\bar{x}$:
\begin{enumerate}
    \denselist
    \item The reduced objective $F$ is $\beta_F$-smooth on $\gN$.
    \item The intersection $\gM_F\cap\gS$ is non-empty. We denote this set as $\gS_F\coloneqq\gM_F\cap\gS$, which forms the set of minimisers of $F$.
\end{enumerate}
\end{assumption}

\begin{notation} 
In the sequel, we use the subscript $f$, to denote all constants associated with the original objective $f$ (\eg, $\beta_f$ for the smoothness constant, $\mu_f$ for the (\ref{eqn:PL})). Similarly, all constants related to the reduced objective $F$ will be subscripted accordingly (\eg, $\beta_F$, $\mu_F$).
\end{notation}

\section{Main Results}
\label{sec:main-results}
We now present the main theoretical contributions of this work. Our results formally demonstrate how exploiting known geometric structure at optimality via reduction mappings leads to improved smoothness and sharpness properties, which in turn result in improved convergence rates for gradient-based methods. Among the four equivalent sharpness conditions discussed previously, we will, for the purpose of this section, adopt the (\ref{eqn:MB}) condition as our reference framework for comparing sharpness constants. When discussing convergence rates, we will switch to the (\ref{eqn:PL}) condition, which is more directly linked to iteration complexity. For clarity of exposition, we organise the results into two parts: smoothness improvements and improvements of the Morse–Bott constant.

\subsection{Smoothness Improvement under Affine Reduction Mappings}
We first establish that affine reduction mappings, by eliminating alignment with the worst-case curvature directions of the original problem, yield strictly improved smoothness constants for the reduced problem. This result is made precise in the following theorem. A noteworthy special case of affine mappings are constant mappings, whose analysis is deferred to Appendix~\ref{appdx:a-gentle-start}.

\begin{theorem}[Sharper Smoothness Constant for Reduced Functions with Affine Mappings]
\label{thm:better-smoothness-affine}
Let $f : \R^n \to \R$ be a $C^2$ function satisfying Assumption~\ref{assum:main} on the compact neighbourhood $\gN$. Let $\Psi : \R^{n_1} \to \R^{n_2}$ be an affine mapping, and define the reduction mapping $\Phi(x_1) = \bigl(x_1, \Psi(x_1)\bigr)$, the reduced function $F$, and the feasible manifold $\gM_F$ as in Definitions~\ref{def:reduction} and~\ref{def:graph-manifold}. We consider the local feasible manifold $\gM_F^\loc \coloneqq \gM_F \cap \gN$.

Let $\sigma_1, \ldots, \sigma_n$ be the singular values of $\nabla^2 f(x)$ arranged in descending order. Suppose that, for every $x \in \gM_F^\loc$, the largest singular value (denoted $\maxSigma$) has multiplicity $p \geq 1$, with associated dominant subspace $\Sigma_{\max}$. Assume the following:
\begin{enumerate}
    \item There exists a uniform constant $\varepsilon\in (0,1]$ such that for all unit vectors $v \in \Sigma_{\max}$,
    \[
    \|\mathrm{P}_{\tangent{x}{\gM_F^\loc}}\,v\| \le 1 - \varepsilon\,,
    \]
    where $\mathrm{P}_{\tangent{x}{\gM_F^\loc}}$ is the orthogonal projection onto the tangent space $\tangent{x}{\gM_F^\loc}$.
    \item There is a uniform spectral gap:
    \[
    \Delta_{\max} \coloneqq \inf_{x\in\gM_F^\loc}\left[\maxSigma\bigl(\nabla^2f(x)\bigr) - \sigma_{p+1}\bigl(\nabla^2 f(x)\bigr)\right] > 0\,.
    \]
\end{enumerate}
Then, equipping $\R^{n_1}$ with the pullback metric induced by the embedding $\Phi$, the Riemannian gradient of $F$ is Lipschitz continuous with constant $\beta_F$ satisfying:
\[
\beta_F \le \beta_f - \Delta_{\max}(2\varepsilon - \varepsilon^2) < \beta_f\,.
\]
\end{theorem}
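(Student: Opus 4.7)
The plan is to exploit the affine structure of $\Phi$ to reduce the smoothness analysis to bounding the operator norm of $\mathrm{P}_T\,\nabla^2 f(x)\,\mathrm{P}_T$ on the tangent space $T = \tangent{x}{\gM_F^\loc}$, and then to combine the angle hypothesis on $\Sigma_{\max}$ with the spectral gap to extract the claimed strict improvement.

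First I would unpack the Riemannian setup. Since $\Psi$ is affine, $\Phi$ has constant Jacobian $J \coloneqq \Der\Phi$, the feasible manifold $\gM_F$ is a global affine subspace of $\R^n$, and the tangent space $T = \operatorname{range}(J)$ is independent of the basepoint. Equipping $\R^{n_1}$ with the pullback metric $J^\top J$ makes $\Phi$ an isometric embedding onto $\gM_F$; under this identification, the Riemannian gradient of $F$ at $x_1$ corresponds via $\Der\Phi$ to $\mathrm{P}_T\bigl(\nabla f(\Phi(x_1))\bigr)$. Because $\gM_F$ is a flat affine subspace with constant tangent projection, differentiating along $\gM_F$ represents the Riemannian Hessian of $F$ on $T$ as $\mathrm{P}_T\,\nabla^2 f(x)\,\mathrm{P}_T$, and hence
\[
\beta_F \;\le\; \sup_{x \in \gM_F^\loc}\,\bigl\|\mathrm{P}_T\,\nabla^2 f(x)\,\mathrm{P}_T\bigr\| .
\]

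Second, I would estimate this norm pointwise. For any unit $u \in T$, write $u = \mathrm{P}_{\Sigma_{\max}} u + \mathrm{P}_{\Sigma_{\max}^\perp} u$ and set $a \coloneqq \|\mathrm{P}_{\Sigma_{\max}} u\|$. Since $\nabla^2 f(x)$ is symmetric, $\Sigma_{\max}$ (as a union of eigenspaces) is invariant, so the cross term vanishes and
\[
\bigl|u^\top \nabla^2 f(x)\,u\bigr| \;\le\; \maxSigma(x)\,a^2 + \sigma_{p+1}(x)\,(1-a^2) \;=\; \sigma_{p+1}(x) + \bigl(\maxSigma(x) - \sigma_{p+1}(x)\bigr)\,a^2 .
\]
By symmetry of principal angles, $\max_{v\in\Sigma_{\max},\,\|v\|=1}\|\mathrm{P}_T v\| = \max_{u\in T,\,\|u\|=1}\|\mathrm{P}_{\Sigma_{\max}} u\|$, so the hypothesis yields $a \le 1-\varepsilon$. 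The right-hand side above is monotone increasing in $a^2$; substituting $a^2 = (1-\varepsilon)^2$ and using $1-(1-\varepsilon)^2 = 2\varepsilon - \varepsilon^2$ gives
\[
\bigl\|\mathrm{P}_T\,\nabla^2 f(x)\,\mathrm{P}_T\bigr\| \;\le\; \maxSigma(x) - \bigl(\maxSigma(x) - \sigma_{p+1}(x)\bigr)(2\varepsilon - \varepsilon^2) .
\]

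Finally, taking the supremum over $x \in \gM_F^\loc$, the first term is bounded by $\beta_f$ via Assumption~\ref{assum:main}, while the infimum of the gap factor is by definition $\Delta_{\max}$. This yields $\beta_F \le \beta_f - \Delta_{\max}(2\varepsilon - \varepsilon^2) < \beta_f$, with strictness following from $\Delta_{\max} > 0$ and $2\varepsilon - \varepsilon^2 > 0$ on $\varepsilon \in (0,1]$. The main technical subtlety I anticipate is the identification of the Riemannian Hessian with $\mathrm{P}_T\,\nabla^2 f\,\mathrm{P}_T$ under the pullback metric: intrinsically this is immediate since affine subspaces are flat with vanishing second fundamental form, but one must verify that operator norms computed against the pullback metric on $\R^{n_1}$ agree with Euclidean operator norms on $T$. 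Working on $\gM_F$ intrinsically and transferring the final estimate back to $F$ via the isometry $\Phi$ circumvents any bookkeeping involving $J^\top J$.
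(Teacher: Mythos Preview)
Your proposal is correct and follows essentially the same approach as the paper: both identify the Riemannian smoothness constant with $\sup_{w\in T,\,\|w\|=1}|w^\top \nabla^2 f(x)\,w|$, decompose $w$ into its components on $\Sigma_{\max}$ and $\Sigma_{\max}^\perp$, bound the two pieces by $\maxSigma$ and $\sigma_{p+1}$ respectively, and use the angle hypothesis to cap the $\Sigma_{\max}$-component at $(1-\varepsilon)^2$. Your phrasing via invariance of $\Sigma_{\max}$ (killing the cross term) and symmetry of principal angles is slightly cleaner than the paper's explicit eigenbasis expansion and Cauchy--Schwarz step, but the underlying argument is identical.
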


\paragraph{Proof Sketch of Theorem~\ref{thm:better-smoothness-affine}}
The proof relies on viewing the smoothness of the reduced function $F$ as the largest curvature of $f$ along the feasible manifold $\gM_F^\loc$. Since the dominant curvature directions of $f$ are not contained in the tangent space of $\gM_F^\loc$, the restriction of the Hessian to $\gM_F^\loc$ exhibits strictly smaller operator norm. By combining this observation with the spectral gap assumption, we obtain a strict improvement in the smoothness constant. The argument is formalised in Appendix~\ref{appdx:main-proofs}. \qed

This result establishes that the smoothness constant of the reduced function $F$ is strictly smaller than that of the original function $f$, with the improvement governed by the geometric properties of the intersection between the dominant curvature directions and the feasible manifold. These assumptions correspond to generic properties under mild conditions, as discussed in Appendices~\ref{appdx:subsec:uniform-angle-bounds} and~\ref{appdx:subsec:genericity-of-single-eigenvalues}. 

We now translate this result into the standard Euclidean setting, where the role of the pullback metric becomes explicit through the following corollary.

\begin{corollary}[Euclidean Smoothness Bound under Affine Mappings]
\label{cor:eucl-smoothness}
Under the setting of Theorem~\ref{thm:better-smoothness-affine}, define the Euclidean smoothness constant of the reduced function $F$ as $\beta_F^{(E)} \coloneqq \underset{x_1}{\sup} \bigl\|\nabla^2 F\bigr\|$, where the derivative of $\Phi$ is $\Der\Phi = \begin{pmatrix}I\\ \Der\Psi\end{pmatrix}$ and $M^{(\Phi)} \coloneqq \maxEigen\bigl(\Der\Phi^\top\Der\Phi\bigr)$. Then,
\[
\beta_F^{(E)} \le M^{(\Phi)}\,\bigl[\beta_f - \Delta_{\max}(2\varepsilon - \varepsilon^2)\bigr] < M^{(\Phi)}\,\beta_f.
\]
In particular, when $\Psi$ is an orthogonal projection, $M^{(\Phi)} = 2$, and the sufficient condition ensuring $\beta_F^{(E)} < \beta_f$ reduces to
\[
\Delta_{\max}(2\varepsilon - \varepsilon^2) > \tfrac12\,\beta_f\,.\tag{$\star$}
\label{eqn:eigenvalue-condition}
\]
\end{corollary}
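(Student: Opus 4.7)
The plan is to reduce the Euclidean statement to the Riemannian estimate of Theorem~\ref{thm:better-smoothness-affine} by expanding the Euclidean Hessian of $F$ via the chain rule and paying the explicit geometric price of passing through the parametrisation $\Phi$.

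First, I would compute $\nabla^2 F(x_1)$. In general the chain rule yields
\[
\nabla^2 F(x_1) \;=\; \Der\Phi(x_1)^\top\,\nabla^2 f\bigl(\Phi(x_1)\bigr)\,\Der\Phi(x_1) \;+\; \sum_{i=1}^{n}\bigl(\partial_i f\bigr)\bigl(\Phi(x_1)\bigr)\,\nabla^2\Phi_i(x_1)\,.
\]
Because $\Psi$ is affine, each coordinate of $\Phi$ is affine, so $\nabla^2\Phi_i\equiv 0$ and the ``second fundamental form'' term vanishes. This is the key structural simplification afforded by the affine assumption: the extrinsic and pullback geometries of $\gM_F^\loc$ coincide and one is left with $\nabla^2 F(x_1) = \Der\Phi^\top\,\nabla^2 f(\Phi(x_1))\,\Der\Phi$.

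Next, I would bound the Euclidean operator norm. For any unit $u\in\R^{n_1}$, set $v=\Der\Phi\,u$; then $\|v\|^2 = u^\top(\Der\Phi^\top\Der\Phi)u \le M^{(\Phi)}$ and $v\in\tangent{\Phi(x_1)}{\gM_F^\loc} = \mathrm{range}(\Der\Phi)$. Since $\nabla^2 F$ is symmetric, it follows that
\[
\bigl|u^\top\nabla^2 F(x_1)\,u\bigr| \;=\; \bigl|v^\top\nabla^2 f(\Phi(x_1))\,v\bigr| \;\le\; M^{(\Phi)}\cdot \sup_{\substack{w\in\tangent{\Phi(x_1)}{\gM_F^\loc}\\\|w\|=1}} \bigl|w^\top\nabla^2 f(\Phi(x_1))\,w\bigr|\,.
\]
The inner supremum is precisely the operator norm of the Riemannian Hessian of $F$ under the pullback metric $G=\Der\Phi^\top\Der\Phi$: because $\Psi$ is affine, the map $\Phi:(\R^{n_1},G)\to\R^n$ is an isometric, totally geodesic embedding, so the Lipschitz constant of the Riemannian gradient collapses to the tangent-restricted operator norm of $\nabla^2 f$. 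Theorem~\ref{thm:better-smoothness-affine} bounds this uniformly by $\beta_f - \Delta_{\max}(2\varepsilon-\varepsilon^2)$, and taking a supremum over $x_1$ yields $\beta_F^{(E)} \le M^{(\Phi)}\,[\beta_f-\Delta_{\max}(2\varepsilon-\varepsilon^2)] < M^{(\Phi)}\,\beta_f$.

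For the orthogonal-projection case, I would write $\Der\Phi^\top\Der\Phi = I_{n_1} + \Der\Psi^\top\Der\Psi$; the nonzero singular values of an orthogonal projection equal one, so $\maxEigen(\Der\Phi^\top\Der\Phi)=2$ and $M^{(\Phi)}=2$. Substituting into the general bound and solving $2\,[\beta_f-\Delta_{\max}(2\varepsilon-\varepsilon^2)] < \beta_f$ for the gap term gives $\Delta_{\max}(2\varepsilon-\varepsilon^2) > \tfrac12\beta_f$, as claimed. The hard part I expect to spell out carefully is the identification in the second step between the tangent-restricted quadratic form and the Riemannian Hessian under the pullback metric: it is precisely the affineness of $\Psi$ that removes the $\sum_i(\partial_i f)\nabla^2\Phi_i$ correction and allows the sharper Riemannian estimate of Theorem~\ref{thm:better-smoothness-affine} to be inserted into the Euclidean bound at the clean multiplicative cost $M^{(\Phi)}$.
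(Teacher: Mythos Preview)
Your proposal is correct and follows essentially the same route as the paper: the paper invokes its Corollary~\ref{cor:euclidean-quotient} (Euclidean Rayleigh bounds via Gram-matrix extremes) to pass from the Euclidean operator norm to the pullback-metric norm at the multiplicative cost $M^{(\Phi)}$, then substitutes the bound from Theorem~\ref{thm:better-smoothness-affine}; you simply unpack that corollary inline by writing $v=\Der\Phi\,u$, bounding $\|v\|^2\le M^{(\Phi)}$, and identifying the resulting tangent-restricted quadratic form with the Riemannian Hessian norm. The treatment of the orthogonal-projection case and the derivation of condition~(\ref{eqn:eigenvalue-condition}) are likewise the same.
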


Condition (\ref{eqn:eigenvalue-condition}) requires the Hessian of $f$ to exhibit a significant spectral gap between its largest and subsequent eigenvalues. This scenario corresponds to highly anisotropic curvature, where a small number of dominant directions govern the largest curvatures of the loss landscape. Such spectral structures are commonly observed in over-parametrised models in machine learning, where empirical studies have consistently reported Hessians with a few large outlier eigenvalues and a bulk of near-zero eigenvalues~\cite{papyan2019measurements, papyan2020traces, sagun2016eigenvalues, sagun2017empirical, ghorbani2019investigation}. This phenomenon supports the practical relevance of our condition in many deep learning settings. We note that the degradation observed in the Euclidean smoothness constant in Corollary~\ref{cor:eucl-smoothness} arises because the reduction mapping induces a non-Euclidean geometry on the reduced space. The natural metric in this setting is the pullback metric induced by the mapping itself, under which the improvement in smoothness is directly captured, as shown in Theorem~\ref{thm:better-smoothness-affine}. We will return to this point and its implications for convergence rates in Section~\ref{sec:convergence-results-and-discussion}.

\subsection{Smoothness Improvement under Nonlinear Reduction Mappings}
We now extend the previous result to the more interesting case where the reduction mapping $\Psi(x_1)$ is nonlinear. In this setting, the feasible manifold $\gM_F^\loc$ becomes curved within the ambient space, introducing an additional curvature contribution to the reduced function $F$. This contribution is captured by the correction term $\C(x_1)$ in the Hessian (see Lemma~\ref{lem:restricted_hessian}). Intuitively, this term quantifies the bending effect of $\gM_F^\loc$ and the extent to which the nonlinearity of $\Psi$ adds curvature to $F$. When this correction remains sufficiently small relative to the spectral gap between the largest curvature directions of $f$ and their restriction to $\gM_F^\loc$, the reduction in the smoothness constant is preserved. This is formalised in the following theorem and the corresponding corollary for the Euclidean case.

\begin{theorem}[Sharper Smoothness Constant for Reduced Functions with Nonlinear Mappings]
\label{thm:better-smoothness-nonlinear}
Let $f$, $\Phi$, $F$, and $\gM_F^\loc$ be as in Theorem~\ref{thm:better-smoothness-affine}, except that $\Psi(x_1)$ is now a general $C^2$ mapping. Assume:
\begin{enumerate}
    \item There exist constants $Q, Z > 0$ such that for all $x_1 \in \gM_F^\loc$,  $\|\Der^2\Psi(x_1)\| \le Q$ and $\|\nabla_{x_2} f(x_1, \Psi(x_1))\| \le Z.$
    \item The correction term satisfies $\tfrac{Q\, Z}{m^{(\Phi)}} < \delta,$ where $m^{(\Phi)} = \minEigen\bigl(\Der\Phi^\top \Der\Phi\bigr)$ and $\delta$ is the curvature gap defined as
    \[
    \delta \coloneqq \maxSigma\bigl(\nabla^2 f(\Phi(x_1))\bigr) - \maxSigma\Bigl(\nabla^2 f(\Phi(x_1))\big|_{\tangent{\Phi(x_1)}{\gM_F^\loc}}\Bigr)\,.
    \]
\end{enumerate}
Then, the reduced function $F$ has a Lipschitz continuous Riemannian gradient with
\[
\beta_F \le \beta_f - \Delta_{\max}(2\varepsilon - \varepsilon^2) + \frac{Q\, Z}{m^{(\Phi)}} < \beta_f.
\]
In particular, when $\Psi$ is an orthogonal projection, $m^{(\Phi)} = 1$.
\end{theorem}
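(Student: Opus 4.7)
The plan is to start from the identity
$$\nabla^2 F(x_1) = \Der\Phi(x_1)^\top \nabla^2 f(\Phi(x_1))\,\Der\Phi(x_1) + \C(x_1)$$
supplied by Lemma~\ref{lem:restricted_hessian}, where the correction $\C(x_1) = \sum_{j=1}^{n_2} \bigl(\nabla_{x_2} f\bigr)_j\,\nabla^2\Psi_j(x_1)$ encodes the bending of $\gM_F^\loc$ induced by the nonlinearity of $\Psi$ and vanishes in the affine case. Bounding the Riemannian smoothness constant in the pullback metric $g = \Der\Phi^\top\Der\Phi$ amounts to controlling $u^\top \nabla^2 F(x_1)\,u$ uniformly over unit vectors $u$ with $g(u,u) = 1$ and over $x_1 \in \gM_F^\loc$. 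I would split this supremum into a tangential Hessian piece and a correction piece and bound each separately.

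For the tangential piece, set $v = \Der\Phi(x_1)\,u$; then $v$ lies in $\tangent{\Phi(x_1)}{\gM_F^\loc}$ with $\|v\|^2 = g(u,u) = 1$, so $v^\top \nabla^2 f(\Phi(x_1))\,v \le \maxSigma\bigl(\nabla^2 f(\Phi(x_1))\big|_{\tangent{\Phi(x_1)}{\gM_F^\loc}}\bigr)$. The argument from the proof of Theorem~\ref{thm:better-smoothness-affine} is pointwise in $x_1$: it uses only the angle bound on the dominant subspace $\Sigma_{\max}$ and the uniform spectral gap $\Delta_{\max}$, both of which are imposed uniformly on $\gM_F^\loc$. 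Hence the same estimate carries through even though $\tangent{\Phi(x_1)}{\gM_F^\loc}$ now varies with $x_1$, giving $\maxSigma\bigl(\nabla^2 f(\Phi(x_1))\big|_{\tangent{\Phi(x_1)}{\gM_F^\loc}}\bigr) \le \beta_f - \Delta_{\max}(2\varepsilon - \varepsilon^2)$.

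For the correction piece, rewrite $u^\top \C(x_1)\,u = \langle \nabla_{x_2} f(\Phi(x_1)),\,\Der^2\Psi(x_1)(u,u)\rangle$ and invoke Cauchy--Schwarz together with $\|\nabla_{x_2} f\| \le Z$ and $\|\Der^2\Psi\| \le Q$ to get $|u^\top \C(x_1)\,u| \le Q\,Z\,\|u\|^2$. Since $g(u,u) = 1$ and $g \succeq m^{(\Phi)}\,I$, we have $\|u\|^2 \le 1/m^{(\Phi)}$, and therefore $|u^\top \C(x_1)\,u| \le Q\,Z/m^{(\Phi)}$. Summing the two bounds and taking the supremum over $x_1 \in \gM_F^\loc$ delivers the stated inequality; the strict inequality $\beta_F < \beta_f$ then follows from assumption~2, since $\delta \ge \Delta_{\max}(2\varepsilon - \varepsilon^2)$. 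The orthogonal-projection case reduces to $m^{(\Phi)} = 1$ because $\Der\Phi^\top\Der\Phi = I + P$ has minimum eigenvalue one.

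The main obstacle is the extrinsic curvature contribution. In the affine setting $\gM_F^\loc$ is flat and $\C$ vanishes identically, so the spectral-gap gain is pure profit. A nonlinear $\Psi$, however, bends the feasible manifold inside $\R^n$, and any nonzero component of $\nabla f$ in the $x_2$ direction feeds this bending into $\nabla^2 F$ through the second fundamental form. The delicate step is ensuring that, once measured in the pullback metric, this extrinsic cost stays strictly below the tangential gain coming from Theorem~\ref{thm:better-smoothness-affine}; that is precisely what the hypothesis $Q\,Z/m^{(\Phi)} < \delta$ guarantees. A secondary technicality is that $\|\nabla_{x_2} f\|$, not the true normal component of $\nabla f$, enters $\C$, so the bound is slightly conservative away from $\gS$; the two coincide when $\Psi$ is an orthogonal projection, which motivates singling out that case.
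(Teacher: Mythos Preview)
Your proposal is correct and follows essentially the same approach as the paper: decompose $\nabla^2 F$ via Lemma~\ref{lem:restricted_hessian} into the tangential part $\Der\Phi^\top\nabla^2 f\,\Der\Phi$ plus the correction $\C$, bound the former by reusing the pointwise argument of Theorem~\ref{thm:better-smoothness-affine}, and bound the latter by $QZ/m^{(\Phi)}$ using Cauchy--Schwarz and the metric inequality $\|u\|^2\le 1/m^{(\Phi)}$. The only cosmetic difference is that the paper packages the two-term split through Weyl's inequality $\maxSigma(B+\C)\le\maxSigma(B)+\|\C\|_R$ in the pullback operator norm, whereas you work directly with the Rayleigh quotient $u^\top\nabla^2 F\,u$ over $g$-unit vectors; the computations are identical.
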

\paragraph{Proof Sketch of Theorem~\ref{thm:better-smoothness-nonlinear}} The proof extends the affine case by accounting for the additional curvature induced by the nonlinearity of $\Psi$. This is captured by the correction term $\C(x_1)$ in the Hessian of $F$, which arises due to the curvature of the feasible manifold $\gM_F^\loc$. By controlling $\|\C(x_1)\|$ via bounds on $\Der^2\Psi$ and $\nabla_{x_2}f$, and ensuring that it remains strictly smaller than the curvature gap $\delta$ obtained from the projection step, we show that the overall smoothness constant of $F$ is still strictly lower than that of $f$. The result follows by combining these bounds and applying Weyl's inequality for perturbed operators. Full details are given in Appendix~\ref{appdx:main-proofs}.\qed

\begin{corollary}[Euclidean Smoothness Bound under Nonlinear Mappings]
\label{cor:euclidean-smoothness-nonlinear}
Under the setting of Theorem~\ref{thm:better-smoothness-nonlinear}, the Euclidean smoothness constant of the reduced function $F$ satisfies
\[
\beta_F^{(E)} \le M^{(\Phi)}\,\bigl[\beta_f - \Delta_{\max}(2\varepsilon - \varepsilon^2)\bigr] + Q\,Z < M^{(\Phi)}\,\beta_f\,,
\]
where $M^{(\Phi)}$ is the metric distortion factor defined as in Corollary~\ref{cor:eucl-smoothness}. In particular, when $\Psi$ is an orthogonal projection, $M^{(\Phi)} = 2$, and the sufficient condition ensuring $\beta_F^{(E)} < \beta_f$ simplifies to
\[
\Delta_{\max}(2\varepsilon - \varepsilon^2) > \tfrac12\,(\beta_f + Q\,Z)\,.
\]
\end{corollary}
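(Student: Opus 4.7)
}
The plan is to apply the second-order chain rule to $F = f \circ \Phi$ and then bound each resulting block separately in the Euclidean operator norm, reusing the tangential spectral-gap bound already established in the proof of Theorem~\ref{thm:better-smoothness-nonlinear}. First, I would expand
\[
\nabla^2 F(x_1) \;=\; \Der\Phi(x_1)^\top\, \nabla^2 f(\Phi(x_1))\, \Der\Phi(x_1) \;+\; \C(x_1)\,,
\]
where the correction block $\C(x_1) = \sum_{k=1}^{n_2} \bigl[\nabla_{x_2} f(\Phi(x_1))\bigr]_k\, \nabla^2 \Psi_k(x_1)$ collects the second derivatives of the nonlinear part $\Psi$; the identity block of $\Phi$ contributes nothing since its second derivative vanishes.

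For the quadratic-form block, I would exploit that the image of $\Der\Phi(x_1)$ coincides with $\tangent{\Phi(x_1)}{\gM_F^\loc}$. Inserting the orthogonal projection $\mathrm{P}_T$ onto this tangent space factorises the block as $\Der\Phi^\top\, \mathrm{P}_T\, \nabla^2 f\, \mathrm{P}_T\, \Der\Phi$, and submultiplicativity together with $\|\Der\Phi\|^2 = \maxEigen(\Der\Phi^\top\Der\Phi) = M^{(\Phi)}$ then yields
\[
\bigl\|\Der\Phi^\top\, \nabla^2 f\, \Der\Phi\bigr\|_{\mathrm{op}} \;\le\; M^{(\Phi)}\, \maxSigma\!\Bigl(\nabla^2 f\big|_{\tangent{\Phi(x_1)}{\gM_F^\loc}}\Bigr) \;\le\; M^{(\Phi)}\,\bigl[\beta_f - \Delta_{\max}(2\varepsilon - \varepsilon^2)\bigr]\,,
\]
the final inequality being exactly the tangential spectral bound produced within the proof of Theorem~\ref{thm:better-smoothness-nonlinear}. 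For the correction block, the uniform estimates $\|\Der^2\Psi\| \le Q$ and $\|\nabla_{x_2} f\| \le Z$ give $\|\C(x_1)\|_{\mathrm{op}} \le QZ$ directly, with no $m^{(\Phi)}$ appearing in the denominator because the ambient inner product carries no pullback-metric normalisation. Summing the two bounds via the triangle inequality produces the first claimed estimate; the strict inequality $\beta_F^{(E)} < M^{(\Phi)} \beta_f$ is then inherited from the hypothesis $QZ/m^{(\Phi)} < \delta$ of Theorem~\ref{thm:better-smoothness-nonlinear}, together with $\delta \ge \Delta_{\max}(2\varepsilon - \varepsilon^2)$ and the relation $m^{(\Phi)} \le M^{(\Phi)}$ between the extreme eigenvalues of the metric tensor.

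The projection specialisation then follows by substituting $M^{(\Phi)} = 2$ and rearranging $\beta_F^{(E)} < \beta_f$ algebraically into $\Delta_{\max}(2\varepsilon - \varepsilon^2) > \tfrac12(\beta_f + QZ)$. The main subtlety I anticipate is in the quadratic-form step: the factor $M^{(\Phi)}$ must cleanly separate from the tangential curvature improvement, which requires carefully using that $\Der\Phi$ annihilates directions normal to $\tangent{\Phi(x_1)}{\gM_F^\loc}$ so that the spectral-gap improvement is preserved without paying a second multiplicative factor. The correction block is comparatively straightforward, since its Euclidean bound is essentially the raw, un-normalised counterpart of the $m^{(\Phi)}$-scaled Riemannian estimate used in the proof of Theorem~\ref{thm:better-smoothness-nonlinear}.
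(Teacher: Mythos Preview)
Your plan is correct and is precisely the argument needed to obtain the stated bound. The paper does not spell out a proof of this corollary, but your decomposition $\nabla^2 F = B + \C$ followed by separate Euclidean bounds---$\|B\| \le M^{(\Phi)}\,\maxSigma\bigl(\nabla^2 f\big|_{\tangent{x}{\gM_F^\loc}}\bigr)$ via the projection/submultiplicativity step, and $\|\C\| \le QZ$ directly---is exactly right. Note in particular that the naive route of multiplying the full Riemannian bound from Theorem~\ref{thm:better-smoothness-nonlinear} by $M^{(\Phi)}$ (as in the proof of Corollary~\ref{cor:eucl-smoothness}) would yield the looser correction $M^{(\Phi)} QZ/m^{(\Phi)}$ rather than $QZ$; your splitting is what recovers the sharper additive term.

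One caveat on the strict inequality: your chain $QZ < m^{(\Phi)}\delta$, $\delta \ge \Delta_{\max}(2\varepsilon-\varepsilon^2)$, $m^{(\Phi)} \le M^{(\Phi)}$ does not by itself force $QZ < M^{(\Phi)}\Delta_{\max}(2\varepsilon-\varepsilon^2)$, since the two comparisons run in opposite directions. This is not really your gap, however---the same tension is already present in the paper's Theorem~\ref{thm:better-smoothness-nonlinear}, where the displayed strict inequality $\beta_F < \beta_f$ likewise requires $QZ/m^{(\Phi)} < \Delta_{\max}(2\varepsilon-\varepsilon^2)$ rather than merely $QZ/m^{(\Phi)} < \delta$. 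The cleanest fix (in both places) is to carry the actual gap $\delta$ through the bound on $\|B\|$ rather than its lower estimate $\Delta_{\max}(2\varepsilon-\varepsilon^2)$, after which the strict inequality follows immediately from the hypothesis.
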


Reduction mappings often naturally arise in bilevel optimisation settings, where $x_2 = \Psi(x_1)$ is implicitly defined as the solution to an inner problem. Depending on the problem structure, these mappings can be affine or nonlinear, and our results apply equally in both cases. We formalise this in the remark below.

\begin{remark}[Inner Mappings as Argmin Problems]
\label{remark:mappings-as-argmin}
Reduction mappings often arise when $\Psi(x_1)$ is defined implicitly as a local solution to an inner optimisation problem
\[
\Psi(x_1) \in \argmin_{u \in \gC} G(x_1, u)\,.
\]
Under standard regularity conditions, such as constraint qualifications and strict second-order sufficiency (SSOSC), classical sensitivity results ensure that $\Psi(x_1)$ is locally $C^2$~\cite{bonnans2013perturbation}. Thus, our previous theorems for affine and nonlinear mappings apply directly in such settings.
\end{remark}

To better understand the bounds on the correction term, when the mapping $\Psi$ arises from an inner \texttt{argmin} problem, the constants $Q$ and $Z$ have been explicitly quantified in Theorem~\ref{thm:correction-term-bound} in Appendix~\ref{appdx:additional-theorems}.

\subsection{Morse--Bott Constant Improvement under Reduction Mappings}
In Appendix~\ref{appdx:additional-theorems}, we establish that if the original function $f$ satisfies the (\ref{eqn:MB}) property, then the reduced function $F$ obtained via reduction mappings also satisfies the (\ref{eqn:MB}) property, albeit potentially with a different constant. In the following theorem, we strengthen this result by showing that the (\ref{eqn:MB}) constant of the reduced problem is in fact strictly improved compared to that of the original problem.

\begin{theorem}[Strict Improvement of the Morse--Bott Constant under Smooth Reduction]
\label{thm:mb-constant-strict-improvement}
Let $f : \R^n \to \R$ be a $C^2$ function where the solution manifold $\gS$ satisfies the $\mu_f$-\ref{eqn:MB} property within a compact neighbourhood $\gN$. Let $\Psi : \R^{n_1} \to \R^{n_2}$ be a $C^2$ mapping and define the reduced function $F(x_1) = f(x_1, \Psi(x_1))$, with local feasible manifold $\gM_F^\loc = \gM_F \cap \gN$.

At each $\bar{x} \in \gS \cap \gM_F^\loc$, let $H_{\bar{x}}$ denote the restriction of $\nabla^2 f(\bar{x})$ to $\normal{\bar{x}}{\gS}$. Assume:
\begin{enumerate}
    \denselist
    \item The smallest eigenvalue $\minEigen(H_{\bar{x}})$ has multiplicity $m$, with eigenspace $E_{\min}$, and for all $v \in E_{\min}$, $\|v\| = 1$,
    \[
    \|\mathrm{P}_{\tangent{\bar{x}}{\gM_F^\loc}}\,v\| \le 1-\varepsilon\,,
    \]
    holds uniformly for some $\varepsilon \in (0, 1]$.
    \item The spectral gap $\Delta_{\min} \coloneqq \underset{\bar{x} \in \gS \cap \gM_F^\loc}{\inf} \left[\lambda_{n-m}(H_{\bar{x}}) - \minEigen(H_{\bar{x}})\right] > 0.$
\end{enumerate}
Then, for the (\ref{eqn:MB}) property of the reduced function $F$, we obtain a strictly improved constant
\[
\mu_F \ge \mu_f + \Delta_{\min}(2\varepsilon - \varepsilon^2) > \mu_f\,.
\]
\end{theorem}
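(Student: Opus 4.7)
The plan is to mirror the variational argument used for the smoothness bound in Theorem~\ref{thm:better-smoothness-affine}, but now track the \emph{smallest} eigenvalue on the admissible normal cone of the reduced solution manifold rather than the largest on its tangent. Fix $\bar{x}\in\gS\cap\gM_F^\loc$ and let $\bar{x}_1=\Phi^{-1}(\bar{x})$. Since $\nabla f(\bar{x})=0$ at any minimiser, the nonlinear correction term in the Hessian of $F$ vanishes, giving $\nabla^2 F(\bar{x}_1)=\Der\Phi(\bar{x}_1)^{\top}\nabla^2 f(\bar{x})\,\Der\Phi(\bar{x}_1)$. Endowing $\R^{n_1}$ with the pullback metric and identifying tangent directions via $\Der\Phi$, the Morse--Bott constant $\mu_F$ equals
\[
\inf_{\bar{x}\in\gS\cap\gM_F^\loc}\;\min_{\substack{u\in\tangent{\bar{x}}{\gM_F^\loc},\; u\perp\tangent{\bar{x}}{\gS_F}\\ \|u\|=1}}\langle u,\nabla^2 f(\bar{x})\,u\rangle,
\]
with $\perp$ taken in the ambient inner product.

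Next I reduce this inner minimisation to a Rayleigh quotient on a subspace of $\normal{\bar{x}}{\gS}$. Splitting $u=u_{T}+u_{N}$ with $u_{T}\in\tangent{\bar{x}}{\gS}$ and $u_{N}\in\normal{\bar{x}}{\gS}$, the Morse--Bott hypothesis on $f$ gives $\ker\nabla^2 f(\bar{x})=\tangent{\bar{x}}{\gS}$, so the numerator collapses to $\langle u_{N},H_{\bar{x}}u_{N}\rangle$. Using the clean intersection $\tangent{\bar{x}}{\gS_F}=\tangent{\bar{x}}{\gS}\cap\tangent{\bar{x}}{\gM_F^\loc}$, the joint constraints $u\in\tangent{\bar{x}}{\gM_F^\loc}$ and $u\perp\tangent{\bar{x}}{\gS_F}$ force $u_T\in\tangent{\bar{x}}{\gS_F}\cap\tangent{\bar{x}}{\gS_F}^{\perp}=\{0\}$, so the minimum is attained on $\tangent{\bar{x}}{\gM_F^\loc}\cap\normal{\bar{x}}{\gS}$.

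I then apply a Courant--Fischer argument on this intersection. Decompose a unit vector $u$ there as $u=u_1+u_2$ with $u_1\in E_{\min}$ and $u_2\in E_{\min}^{\perp}\cap\normal{\bar{x}}{\gS}$. By the symmetry of sines of principal angles between two subspaces, Assumption~1 is equivalent to the dual bound $\|\mathrm{P}_{E_{\min}}u\|\leq 1-\varepsilon$ for every unit $u\in\tangent{\bar{x}}{\gM_F^\loc}$, hence $\|u_1\|^{2}\leq(1-\varepsilon)^{2}$ and $\|u_2\|^{2}\geq 2\varepsilon-\varepsilon^{2}$. The uniform spectral gap in Assumption~2 yields $\langle u_2,H_{\bar{x}}u_2\rangle\geq(\mu_f+\Delta_{\min})\|u_2\|^{2}$, and combining,
\[
\langle u,H_{\bar{x}}u\rangle=\mu_f\|u_1\|^{2}+\langle u_2,H_{\bar{x}}u_2\rangle\geq\mu_f+\Delta_{\min}\|u_2\|^{2}\geq\mu_f+\Delta_{\min}(2\varepsilon-\varepsilon^{2}).
\]
Taking the infimum over $\bar{x}\in\gS\cap\gM_F^\loc$ delivers the claim.

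The main obstacle is the reduction to $\normal{\bar{x}}{\gS}$: one must argue that the admissible set $\tangent{\bar{x}}{\gM_F^\loc}\cap\tangent{\bar{x}}{\gS_F}^{\perp}$ is genuinely contained in $\normal{\bar{x}}{\gS}$. This rests on a clean transversality of $\gM_F^\loc$ with $\gS$ so that $\tangent{\bar{x}}{\gS_F}=\tangent{\bar{x}}{\gS}\cap\tangent{\bar{x}}{\gM_F^\loc}$; absent this, an admissible test vector could pick up a kernel-direction component of $\nabla^2 f(\bar{x})$ that would degrade the Rayleigh quotient and possibly destroy the strict inequality. All remaining steps are essentially the Morse--Bott analogue of the spectral calculation used in Theorem~\ref{thm:better-smoothness-affine}, so I expect the technical work to be concentrated in verifying this transversality and the angle-symmetry lemma.
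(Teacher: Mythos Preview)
Your overall strategy matches the paper's: mirror the Rayleigh--quotient argument of Theorem~\ref{thm:better-smoothness-affine}, now for the smallest eigenvalue of $H_{\bar x}$ on $\normal{\bar x}{\gS}$, using that the nonlinear correction term vanishes on $\gS$. The paper's own proof is a two-line sketch invoking exactly this analogy, so at the level of ideas you are aligned; your principal-angle duality step and the subsequent spectral estimate are both sound.

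The gap is precisely in the reduction step you flag. Your argument that $u_T=0$ rests on $u_T\in\tangent{\bar x}{\gS_F}$; clean intersection gives $\tangent{\bar x}{\gS_F}=\tangent{\bar x}{\gS}\cap\tangent{\bar x}{\gM_F^\loc}$, so you would also need $u_T\in\tangent{\bar x}{\gM_F^\loc}$. But $u_T=\mathrm{P}_{\tangent{\bar x}{\gS}}u$, and orthogonal projection onto $\tangent{\bar x}{\gS}$ need not preserve the subspace $\tangent{\bar x}{\gM_F^\loc}$. Concretely, take $f(x,y,z)=\mu y^2+\lambda z^2$ with $0<\mu<\lambda$ (so $\gS=\mathrm{span}(e_1)$, $\mu_f=2\mu$, $E_{\min}=\mathrm{span}(e_2)$) and $\Psi(x_1)=(ax_1,0)$ with $a>0$. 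Then $\gS_F=\{0\}$, the unique admissible unit vector is $u=(1,a,0)/\sqrt{1+a^2}$, yet $u_T=e_1/\sqrt{1+a^2}\neq0$, and the Rayleigh quotient $\langle u,\nabla^2f(0)u\rangle=2\mu a^2/(1+a^2)<2\mu$. Clean intersection holds here ($\tangent{0}{\gS}\cap\tangent{0}{\gM_F}=\{0\}=\tangent{0}{\gS_F}$), so it does not suffice; what would actually force $u_T=0$ is the stronger splitting $\tangent{\bar x}{\gM_F^\loc}=(\tangent{\bar x}{\gM_F^\loc}\cap\tangent{\bar x}{\gS})\oplus(\tangent{\bar x}{\gM_F^\loc}\cap\normal{\bar x}{\gS})$. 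You correctly anticipated that this is the crux, but your proposed resolution via clean intersection alone does not close it---and the paper's brief proof does not address this point either.
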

\begin{proof}
The result follows by applying the same geometric argument as in Theorem~\ref{thm:better-smoothness-affine}, now to the positive definite Hessian $H_{\bar{x}}$ restricted to $\normal{\bar{x}}{\gS}$. Since we operate entirely within the normal space and on the solution manifold, only eigenvalues and eigenspaces matter, and the nonlinearity of $\Psi$ has no effect.
\end{proof}

In the Euclidean setting, the improvement in the Morse--Bott constant is further scaled by the pullback metric. Specifically, since the Euclidean constant is related to the intrinsic pullback constant via $\mu_F^{(E)} \geq m^{(\Phi)}\mu_F$, the distortion introduced by the metric, through its smallest eigenvalue $m^{(\Phi)} \ge 1$, amplifies the effective (\ref{eqn:MB}) constant. This shows that the pullback metric does not only improve the intrinsic condition number but also results in a better constant when measured under the Euclidean metric. In particular, when $\Psi$ is a projection mapping, we have $m^{(\Phi)} =1$, and the Euclidean result coincides exactly with the pullback metric case.

\begin{remark}[Morse--Bott Constant Equivalence~\cite{rebjock2024fastconvergence}]
    Since in Theorem~\ref{thm:mb-constant-strict-improvement} we showed that $\mu_F > \mu_f$ for the (\ref{eqn:MB}) property, by equivalence it follows that the constants $\mu$ for the (\ref{eqn:PL}), (\ref{eqn:EB}), and (\ref{eqn:QG}) conditions will follow the same strict inequality.
\end{remark}

\section{Convergence Rate Results and Discussion}
\label{sec:convergence-results-and-discussion}
In this section, we discuss the algorithmic implications of our geometric analysis of reduction mappings. In particular, we focus on the impact of the improved smoothness and (\ref{eqn:MB})---and by equivalence (\ref{eqn:PL})---constants obtained in Theorems~\ref{thm:better-smoothness-affine}, \ref{thm:better-smoothness-nonlinear}, and \ref{thm:mb-constant-strict-improvement} on the convergence behaviour of first-order methods.

When the reduced problem is equipped with the pullback metric induced by the reduction mapping $\Phi$, our results show that the condition number of the reduced function $F$ is strictly improved relative to that of the original problem $f$. This theoretical gain directly translates into faster local linear convergence rates when applying geometrically preconditioned gradient descent (\ref{eqn:preconditionedGD}) on the reduced objective. We formally state this result below. A broader discussion on metric choices, algorithmic implications, and practical considerations follows.

\begin{corollary}[Faster Linear Convergence of the Reduced Function]
\label{cor:faster-linear-convergence-GD}
Under the settings of Theorems~\ref{thm:better-smoothness-affine}, \ref{thm:better-smoothness-nonlinear}, and \ref{thm:mb-constant-strict-improvement}, equipping $\R^{n_1}$ with the pullback metric induced by $\Phi$ yields a strictly improved condition number for the reduced problem
\[
\kappa_F \coloneqq\frac{\beta_F}{\mu_F} < \frac{\beta_f} {\mu_f} \eqqcolon \kappa_f\,.
\]
As a result, preconditioned gradient descent applied to $F$ achieves a strictly faster local linear convergence rate under (\ref{eqn:PL}) condition compared to gradient descent on $f$~\cite{polyak1963gradient}. Specifically:
\begin{itemize}
    \denselist
    \item The rate factor improves from $\gO(\exp(-t/\kappa_f))$ to $\gO(\exp(-t/\kappa_F))$.
    \item The iteration complexity to achieve accuracy $\epsilon$ improves from $\gO(\kappa_f\log(1/\epsilon))$ to $\gO(\kappa_F\log(1/\epsilon))$.
\end{itemize}
\end{corollary}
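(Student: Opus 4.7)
The plan is to chain together the three main theorems and then invoke the classical linear convergence guarantee for gradient descent under the P{\L} condition, being careful that all quantities are measured in the Riemannian geometry induced by the pullback metric. First I would combine the smoothness bounds of Theorems~\ref{thm:better-smoothness-affine} and~\ref{thm:better-smoothness-nonlinear} with the sharpness bound of Theorem~\ref{thm:mb-constant-strict-improvement}. The former give $\beta_F \le \beta_f - \Delta_{\max}(2\varepsilon-\varepsilon^2) + \tfrac{QZ}{m^{(\Phi)}}$, which is strictly less than $\beta_f$ by the hypotheses of those theorems, while the latter yields $\mu_F \ge \mu_f + \Delta_{\min}(2\varepsilon-\varepsilon^2) > \mu_f$ (using the Morse--Bott/P{\L} equivalence of~\cite{rebjock2024fastconvergence} to carry the improvement to the P{\L} constant). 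Dividing these two strict inequalities termwise would give $\kappa_F = \beta_F/\mu_F < \beta_f/\mu_f = \kappa_f$.

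Next I would invoke the classical result of Polyak~\cite{polyak1963gradient}: if a $C^2$ function $h$ is $\beta$-smooth and $\mu$-P{\L} on a sublevel set, then gradient descent with step size $1/\beta$ satisfies
\[
h(x_{t+1}) - h^\star \le \left(1 - \tfrac{1}{\kappa}\right)\bigl(h(x_t)-h^\star\bigr)\,,
\]
where $\kappa = \beta/\mu$. Applied to $F$ in the pullback geometry, the associated descent iteration $x_1^{(t+1)} = x_1^{(t)} - \tfrac{1}{\beta_F} g^{-1}(x_1^{(t)})\nabla F(x_1^{(t)})$, where $g = \Der\Phi^\top \Der\Phi$ is the pullback metric tensor, is exactly the \ref{eqn:preconditionedGD} iteration alluded to in the statement. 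This gives the contraction factor $1 - 1/\kappa_F < 1 - 1/\kappa_f$, and telescoping yields the rate $\gO(\exp(-t/\kappa_F))$. Inverting the bound $(1-1/\kappa_F)^T \le \epsilon$ via $\log(1-1/\kappa_F) \le -1/\kappa_F$ then delivers the iteration complexity $\gO(\kappa_F \log(1/\epsilon))$, strictly better than the $\gO(\kappa_f \log(1/\epsilon))$ rate for vanilla gradient descent on $f$.

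The main obstacle, and the point I would treat with care, is locality: the smoothness and sharpness constants provided by the three theorems are only guaranteed on the compact neighbourhood $\gN$ of a minimiser, so one must verify that the preconditioned gradient descent trajectory on $F$ remains inside $\gM_F^\loc$ for all $t$. The standard argument is to pick an initial iterate whose sublevel set $\{x_1 : F(x_1) \le F(x_1^{(0)})\}$ is contained in $\gN$; monotone decrease of $F$ along the iterates (which follows from $\beta_F$-smoothness and the step size $1/\beta_F$) then keeps the trajectory trapped in this sublevel set, so the local P{\L} and smoothness bounds remain in force throughout. A secondary subtlety worth remarking on is that, in the pullback metric, what looks like ordinary gradient descent intrinsically on $\R^{n_1}$ corresponds, extrinsically in Euclidean coordinates, to preconditioning by $g^{-1}$; the improved condition number $\kappa_F$ is the intrinsic one, which is precisely why \ref{eqn:preconditionedGD} rather than vanilla gradient descent on $F$ is the algorithm that realises the accelerated rate.
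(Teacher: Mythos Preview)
Your proposal is correct and follows essentially the same approach as the paper, which does not give an explicit proof but treats the corollary as an immediate consequence of combining Theorems~\ref{thm:better-smoothness-affine}--\ref{thm:mb-constant-strict-improvement} with Polyak's classical linear-convergence result under the P{\L} condition. Your additional care regarding locality (trapping the iterates in a sublevel set contained in $\gN$) and the intrinsic-versus-extrinsic interpretation of \ref{eqn:preconditionedGD} in the pullback metric are welcome elaborations that the paper leaves implicit.
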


Below, we make explicit the form of the geometrically preconditioned gradient descent algorithm applied to the reduced problem $F$ under the pullback metric induced by $\Phi$
\begin{equation}
    x_1^{(t+1)} = x_1^{(t)} - \eta\, R^{-1}\,\nabla F\left(x_1^{(t)}\right),\quad \text{with} \quad R \coloneqq \Der\Phi^\top\Der\Phi\,.\tag{GeoPrecGD}
    \label{eqn:preconditionedGD}
\end{equation}

This method performs steepest descent under the pullback metric, aligning the descent directions with the intrinsic geometry induced by the reduction mapping---a fundamental principle in optimisation~\cite{bernstein2024old}. While preconditioning introduces additional computational cost due to the inversion of $R$, such overhead can often be mitigated by exploiting the structure of $R$, as shown in prior works~\cite{xu2023power, cai2019gram, zhang2019fast, zhang2019algorithmic, tong2021accelerating, jordan2024muon, gupta2018shampoo}. In particular, for affine mappings, $R$ is constant and cheap to apply, whereas for nonlinear mappings, structured or approximate solvers can be used.
Designing efficient implementations of these methods is primarily mapping dependent and beyond the scope of this work, where our focus is on the iteration complexity benefits arising from the improved conditioning. 

An instructive special case is when $\Psi$ is an orthogonal projection; a common choice for many reparametrisations. If $\Psi$ is affine, then $R$ simplifies, and its inverse is given by ${R^{-1} = I - \tfrac{1}{2}\Der\Psi^\top\Der\Psi}$, implying no additional computational overhead. More generally, if $\Psi$ is nonlinear but still defines an orthogonal projection onto a manifold, the situation remains favourable, particularly near convergence in the defined neighbourhood. As iterates approach the solution manifold, the derivative $\Der\Phi$ locally behaves as a linear orthogonal projection onto the tangent space of the solution manifold, with the condition number of $R$ remaining close to two. This favourable and stable spectrum near the solution means the preconditioned linear system within the (\ref{eqn:preconditionedGD}) step ($Rz=\nabla F$) can be solved efficiently. Specifically, iterative solvers (\eg, Conjugate Gradient) applied to this system benefit from rapid convergence due to the low condition number. Also, the structure of $R$ allows for an efficient and cheap inversion via the Woodbury formula, depending on whether $n_1 > n_2$ or vice versa.

We validate our theory with synthetic experiments in Appendix~\ref{appdx:experimental-results}. Notably, for quadratic objectives, our geometrically preconditioned gradient descent is equivalent to the full Newton method on the reduced problem; and for nonlinear least-squares, it is equivalent to the Gauss--Newton method applied to the reduced problem. Thus, our approach recovers Newton and Gauss--Newton as special cases while extending beyond them to more general settings.

While preconditioning is necessary and recommended~\cite{kristiadi2023geometry} to fully exploit the improved geometry induced by the reduction mapping, improvements can still be observed under Euclidean gradient descent. Specifically, if the condition (\ref{eqn:eigenvalue-condition}) holds, we still have $\beta_F^{(E)} < \beta_f$, leading to a better condition number even without preconditioning. Even if this condition fails, the improvement in the sharpness constant $\mu_F^{(E)} > \mu_f$ may still result in a better overall condition number $\kappa_F^{(E)}$ compared to $\kappa_f$. Thus, gains are still possible, although they may be limited.

If a practitioner desires additional control over the condition number in Euclidean space for convergence guarantees, the reduction mapping can be modified to induce approximate isometries via a simple \emph{small-slope method}. This approach is general and may warrant a more systematic study; however, a full investigation of such strategies lies outside the scope of this paper. We sketch the idea below.

\begin{remark}[Small-Slope Method and Approximate Isometry]
When using reduction mappings, the pullback metric takes the form of $R=I + \Der\Psi^\top\Der\Psi$, which in general is not identity, leading to potential degradation in the Euclidean smoothness constant proportional to $\maxEigen(R)$. To mitigate this, we propose a simple \emph{small-slope method}, where the mapping is rescaled as $\Psi_\alpha(x_1)\coloneqq \alpha\Psi(x_1)$ for a small $\alpha > 0$. This modifies the metric to $R_\alpha = I + \alpha^2 \Der\Psi^\top\Der\Psi$, whose condition number becomes
\[
\kappa(R_\alpha) = \frac{1 + \alpha^2\maxEigen(\Der\Psi^\top\Der\Psi)}{1 + \alpha^2\minEigen(\Der\Psi^\top\Der\Psi)}\,.
\]
Thus by making $\alpha$ sufficiently small, $R_\alpha$ becomes approximately isometric, improving the likelihood that $\beta_F^{(E)} \lesssim \beta_f$. This method offers a simple yet effective trade-off between improving conditioning and preserving geometric fidelity. Notably, if the geometric structure encoded by $\Psi$ is not scale-invariant, excessively small $\alpha$ may distort the mapping, potentially degrading its ability to capture the correct geometry of the $x_2$ variables. This trade-off is less critical in many common cases—such as subspaces, cones, or orthogonal structures—where the geometry is inherently scale-invariant. The parameter $\alpha$ can be treated as a user-defined knob, balancing conditioning improvement against the precision of the reduction.
\end{remark}

\subsection{Limitations and Future Directions}
\label{subsec:limitations-and-future}
Our analysis assumes that the reduction mapping $\Psi$ can be evaluated exactly. When $\Psi$ is defined implicitly, this idealisation neglects the approximation errors that may arise in practice, potentially introducing bias in the reduced gradients and weakening the theoretical convergence guarantees. Moreover, the computational cost of evaluating $\Psi(x_1)$ is inherently problem-specific and is not explicitly captured in the iteration complexity analysis. A rigorous treatment that jointly considers the benefits of reduction mappings and their evaluation cost remains an open, problem-dependent direction for future work.

Beyond these considerations, our analysis is confined to local properties and deterministic first-order methods. Extending the theory to global convergence settings, particularly in nonconvex landscapes, is an important future direction. In this context, it would be valuable to explore how reduction mappings reshape the landscape globally, including their potential to eliminate or introduce saddle points and spurious minima, extending the framework of~\cite{levin2025effect}. Another promising direction is applying our geometric framework to specific structured problems where the objective admits compositional forms, such as matrix factorisation, nonlinear regression, or neural network training, to obtain practical insights. Finally, extending the analysis to stochastic settings, for example by studying stochastic gradient descent (SGD) under reduction mappings and characterising its regularity conditions, would enhance the relevance of our results in large-scale applications.

\section{Conclusion}
\label{sec:conclusion}
In this work, we presented a unified geometric framework for analysing reduction mappings in optimisation problems with structured solution sets. By explicitly incorporating mappings that encode known geometric structures at optimality, we showed that the resulting reduced problems exhibit strictly improved smoothness and sharpness properties. This leads to enhanced local condition numbers and provably faster convergence rates when applying appropriately preconditioned first-order methods. Our analysis generalises seamlessly from affine to nonlinear mappings, carefully accounting for the additional curvature induced by the bending of the feasible manifold. While the improvements are fundamentally intrinsic to the pullback geometry, we further showed that, under appropriate constructions and trade-offs, these gains can also manifest in the Euclidean metric. Throughout, we emphasised the generality and flexibility of our framework, illustrating that reduction mappings may be given explicitly or arise implicitly as solutions to inner optimisation problems, thereby connecting our approach to classical bilevel and composition formulations via implicit differentiation. We believe this geometric perspective offers a principled and broadly applicable toolset for designing more efficient optimisation algorithms that better exploit problem structure, with potential impact across areas such as matrix factorisation, deep learning, and other structured nonconvex problems.

\bibliographystyle{ieeetr}
\bibliography{references.bib}

\newpage
\appendix
\section{A Gentle Start}
\label{appdx:a-gentle-start}

To build intuition for the broader theoretical developments in this work, we begin with two illustrative examples. Each example is carefully chosen to highlight a different geometric structure of the minimisers: a function with a unique isolated minimum and a function with a continuum of non-isolated minima. For each case, we examine the curvature of the landscape through the lens of the Hessian and its maximal eigenvalue, offering insight into the local geometry.

We then apply a range of reduction mappings to these examples---each designed to restrict the optimisation to a lower-dimensional subspace or manifold---and observe how these mappings affect the curvature of the objective function. Of particular interest is how the maximum curvature behaves under these reductions, as this has direct implications for the conditioning of the problem and the performance of iterative methods.

The section concludes with a focused analysis of constant mappings, a special class where the reduction ignores part of the variable space entirely. This case serves as a useful analytical baseline, offering contrast to structured mappings that actively exploit the problem geometry, whereas constant mappings remain agnostic to how this structure influences the reduced variables.

\subsection{Example 1: Single Isolated Minimum}
Consider the function $f:\R^2\to\R$ defined by
\begin{equation}
f(x_1,x_2)=x_1^2+M(x_2-x_1)^2\,,
\end{equation}
where $M \gg 1$ is a large constant. This function exhibits strong anisotropy: the quadratic term in $x_2$ introduces high curvature along the $x_2$-direction. As a result, the Lipschitz constant of the gradient (\ie, the smoothness constant) $\beta_f$ is on the order of $M$, reflecting the steep variations in the landscape. We note that this function is used in Figure~\ref{fig:geometric-intuition} with $M=2$ for ease of visualisation.

To make this explicit, we compute the gradient and Hessian of $f$ as follows
\begin{equation}
\nabla f(x_1,x_2) =
\begin{pmatrix}
2x_1 + 2M(x_1 - x_2)\\[1mm]
2M(x_2 - x_1)
\end{pmatrix}, \qquad
\nabla^2 f(x_1,x_2)=
\begin{pmatrix}
2+2M & -2M\\[1mm]
-2M & 2M
\end{pmatrix}\,.
\end{equation}
The eigenvalues of the Hessian are given by
\begin{equation}
\lambda = \frac{(2+4M) \pm \sqrt{(2+4M)^2 - 16M}}{2}\,.
\end{equation}
If one instead fixes $x_2$ via a constant mapping that lies on the solution manifold, the effective maximum curvature in the $x_1$-direction becomes $2 + 2M$, since $x_2$ is no longer allowed to adapt to changes in $x_1$, and the mismatch induces high curvature.

Now consider an affine mapping, which can arise from a bilevel setting such that 
\begin{equation}
x_2^*(x_1)=\argmin_{x_2}\, f(x_1,x_2)\,.
\end{equation}
For the function above, it is easy to verify that the optimal inner solution satisfies $x_2^*(x_1)=x_1$. Along the reduced trajectory $(x_1,x_2^*(x_1))=(x_1,x_1)$, the function simplifies to
\begin{equation}
F(x_1) = f(x_1,x_2^*(x_1)) = x_1^2\,.
\end{equation}
In this formulation, the steep curvature contributed by the term $M(x_2 - x_1)^2$ is eliminated entirely, and the resulting function $F$ has a smoothness constant that is independent of $M$.

This example illustrates that allowing $x_2$ to adjust optimally can ``iron out'' steep variations that would otherwise hinder optimisation. However, this is true only when the mapping is well-designed. To see that not all reductions lead to improved curvature, consider instead a nonlinear mapping given by
\begin{equation}
x_2(x_1) = x_1 - 2\sin(x_1)\,.
\end{equation}
Under this mapping, the reduced function becomes
\begin{equation}
F(x_1) = x_1^2 + 4M\sin^2(x_1)\,.
\end{equation}
The curvature of this reduced function is now influenced by the oscillatory term $\sin^2(x_1)$, and its second derivative includes a component proportional to $4M\cos(2x_1)$, leading to increased curvature compared to the original function $f$.

This contrasting case makes clear that not all mappings are beneficial: poorly chosen or misaligned mappings can introduce new sources of curvature rather than mitigate them. Hence, curvature reduction is not an automatic consequence of reparametrisation---it depends critically on the mapping being well-aligned with the geometry of the solution manifold and its effect on the optimisation trajectory. In well-structured settings, such as when $x_2^*(x_1)$ is derived from minimising over $x_2$, the resulting outer function $F$ can exhibit substantially improved curvature characteristics.

In Figure~\ref{fig:curvature-comparison-example-1}, we plot the curvature
\begin{equation}
\kappa(x) = \frac{f''(x)}{\bigl[1+(f'(x))^2\bigr]^{3/2}}\,,
\end{equation}
over the domain $x \in [-0.5, 0.5]$ with $M=10$ for the reduced cases. For comparison, we also indicate the corresponding maximum curvature value (\ie, the largest eigenvalue of the Hessian) for each case, including the original 2D function.

\begin{figure}[htbp]
\centering
\begin{tikzpicture}
\begin{axis}[
    xlabel={$x_1$},
    ylabel={Curvature $\kappa$},
    domain=-0.5:0.5,
    samples=200,
    width=14cm,
    height=8cm
]
\addplot[black, thick, dashed, domain=-0.5:0.5] {41} node[pos=0.05, above] {$\max\kappa \approx 41$};
\addlegendentry{Unconstrained};

\addplot[blue, thick] {22/(1+(22*x)^2)^(3/2)};
\addlegendentry{Fixed};

\addplot[red, thick] {2/(1+(2*x)^2)^(3/2)};
\addlegendentry{Linear};

\addplot[OliveGreen, thick] {(2 + 80*cos(2*x))/(1 + (2*x + 40*sin(2*x))^2)^(3/2)};
\addlegendentry{Nonlinear};

\draw[dashed,blue] (axis cs:-0.5,22) -- (axis cs:0.5,22)
  node[pos=0.05,above] {$\max\kappa = 22$};
\draw[dashed,red] (axis cs:-0.5,2) -- (axis cs:0.5,2)
  node[pos=0.05,above] {$\max\kappa = 2$};
\draw[dashed, OliveGreen] (axis cs:-0.5,82) -- (axis cs:0.5,82)
  node[pos=0.05,below] {$\max\kappa = 82$};

\end{axis}
\end{tikzpicture}
\caption{Curvature profiles for three reduction mappings. The full (unconstrained) function $f(x_1,x_2)=x_1^2+10(x_2-x_1)^2$ exhibits high curvature (approximately 41). Fixing $x_2=0$ yields $F_{\text{fixed}}(x_1) = 11x_1^2$, with curvature 22 at $x_1 = 0$. The bilevel reduction $F_{\text{linear}}(x_1) = x_1^2$ has lower curvature (2 at $x_1 = 0$), while the nonlinear mapping induces strong oscillations and increases the maximum curvature to 82.}
\label{fig:curvature-comparison-example-1}
\end{figure}

\subsection{Example 2: Non-Isolated Minima}
Consider the function $f:\R^2\to\R$ defined by
\begin{equation}
f(x_1, x_2) = \varphi(x_1) + (x_2 - \sin(x_1))^2\,,
\end{equation}
where the flat-bottom quartic $\varphi$ is defined as
\begin{equation}
\varphi(x_1) \coloneqq 
\begin{cases}
(x_1 - \alpha)^4, & x_1 < \alpha\,, \\[4pt]
0, & x_1 \in [\alpha, \beta]\,, \\[4pt]
(x_1 - \beta)^4, & x_1 > \beta\,.
\end{cases}
\end{equation}

The gradient of $f$ is given by
\begin{equation}
\nabla f(x_1, x_2) =
\begin{pmatrix}
\varphi'(x_1) - 2(x_2 - \sin(x_1))\cos(x_1) \\[4pt]
2(x_2 - \sin(x_1))
\end{pmatrix}\,,
\end{equation}
where
\begin{equation}
\varphi'(x_1) = 
\begin{cases}
4(x_1 - \alpha)^3, & x_1 < \alpha\,, \\
0, & x_1 \in [\alpha, \beta]\,, \\
4(x_1 - \beta)^3, & x_1 > \beta\,.
\end{cases}
\end{equation}

The Hessian is
\begin{equation}
\nabla^2 f(x_1, x_2) =
\begin{bmatrix}
\varphi''(x_1) + 2(x_2 - \sin(x_1))\sin(x_1) + 2\cos^2 x_1 & -2\cos(x_1) \\[6pt]
-2\cos(x_1) & 2
\end{bmatrix}\,,
\end{equation}
where
\begin{equation}
\varphi''(x_1) = 
\begin{cases}
12(x_1 - \alpha)^2, & x_1 < \alpha\,, \\
0, & x_1 \in [\alpha, \beta]\,, \\
12(x_1 - \beta)^2, & x_1 > \beta\,.
\end{cases}
\end{equation}

Since the Hessian is symmetric, its eigenvalues are real and given by
\begin{equation}
\lambda_{\pm} = \frac{1}{2}\left( a + c \pm \sqrt{(a - c)^2 + 4b^2} \right)\,,
\end{equation}
where $a = \nabla^2_{x_1 x_1} f$, $b = \nabla^2_{x_1 x_2} f = -2\cos(x_1)$, and $c = \nabla^2_{x_2 x_2} f = 2$. These expressions follow from the standard characteristic equation for symmetric $2 \times 2$ matrices. In our analysis, we record the largest eigenvalue of the Hessian restricted to the region $x_1 \in [\alpha, \beta]$, where $\varphi''(x_1) = 0$.

We consider three reduction mappings $\Psi_i : \mathbb{R} \to \mathbb{R}$, each inducing a reduced function $F_i(x_1) = f(x_1, \Psi_i(x_1))$. The quality of each mapping can be assessed by the curvature of the resulting reduced function, particularly over the flat region $[\alpha, \beta]$, where $\varphi''(x_1) = 0$.

\begin{itemize}
    \item \textbf{Nonlinear (Implicit) mapping:} $\Psi_1(x_1) = \sin(x_1)$. This choice corresponds to the exact solution of the inner minimisation problem
    \begin{equation}
    \Psi_1(x_1) = \argmin_{x_2} f(x_1, x_2)\,,
    \end{equation}
    and cancels the coupling term, \ie, $(x_2 - \sin(x_1))^2 = 0$. The resulting reduced function is
    \begin{equation}
    F_1(x_1) = \varphi(x_1)\,,
    \end{equation}
    which is piecewise quartic with a flat basin over $[\alpha, \beta]$. Since both the first and second derivatives vanish in that interval, $F_1$ has zero curvature there. This mapping represents the ideal scenario for dimensionality reduction, as it perfectly aligns with the optimal structure of the original function.

    \item \textbf{Fixed mapping:} $\Psi_2(x_1) = 0$. This mapping treats $x_2$ as a constant and neglects the structure of the inner problem. The reduced function becomes
    \begin{equation}
    F_2(x_1) = \varphi(x_1) + \sin^2(x_1)\,,
    \end{equation}
    where the additional term $\sin^2(x_1)$ introduces artificial curvature even inside the flat region, due to the mismatch between the fixed choice $x_2 = 0$ and the optimal one $x_2 = \sin(x_1)$. While simple to implement, this mapping results in a highly curved landscape that can significantly hinder optimisation.

    \item \textbf{Linear mapping:} $\Psi_3(x_1) = x_1$. This mapping is a linear approximation to the sine function near the origin and yields
    \begin{equation}
    F_3(x_1) = \varphi(x_1) + (x_1 - \sin(x_1))^2\,.
    \end{equation}
    Inside the interval $[\alpha, \beta]$, $\varphi'' = 0$, but the additional term $(x_1 - \sin(x_1))^2$ introduces nonzero curvature. However, unlike the fixed mapping, the misalignment with $\sin(x_1)$ is smaller, especially near the origin. Thus, $F_3$ achieves lower curvature than $F_2$, but does not eliminate coupling completely. It represents a computationally cheap, yet structurally-informed approximation.
\end{itemize}

These three reductions are visualised in Figure~\ref{fig:geometric-example-2}, where each curve lies on the surface of $f$ and projects to its corresponding manifold $\gM_{F_i}$ in the base plane. Figure~\ref{fig:curvature-comparison-example-2} plots the curvature $\kappa(x_1)$ of each reduction and confirms that the implicit mapping achieves the smallest curvature, which vanishes on $[\alpha, \beta]$, where we have picked $\alpha=-0.5$ and $\beta=0.5$.

\begin{figure}[t!]
\centering
\begin{tikzpicture}
  \begin{axis}[
    view={-30}{30},            
    xlabel={$x_1$},
    ylabel={$x_2$},
    domain=-1.5:1.5,
    y domain=-1.5:1.5,
    samples=30,
    grid=none,
    colormap/cool,
    width=9.5cm,
    height=8cm,
    enlargelimits=false,
    axis line style={draw=none}, 
    xtick=\empty,               
    ytick=\empty,               
    ztick=\empty,               
    legend style={
      draw=none,
      fill=none,
      font=\scriptsize,
      legend columns=1,
      column sep=0.7cm,
      legend cell align={left},
      at={(1,0.5)},
      anchor=west
    }
  ]

    \addplot3[
      patch,
      patch type=rectangle,
      draw=none,
      fill=lightgray,
      opacity=0.8,
      shader=flat
    ]
    coordinates {
      (-2, -2, 0)
      (-2, 2, 0)
      (2, 2, 0)
      (2, -2, 0)
    };

    \addplot3[
      surf,
      shader=interp,
      opacity=0.8,
      domain=-2:2,
      y domain=-2:2,
      samples=30
    ]
    {
      (x < -0.5) * (x + 0.5)^4 +
      (x > 0.5) * (x - 0.5)^4 +
      (y - sin(deg(x)))^2 + 5
    };

    \addplot3[
      domain=-2:2,
      y domain=0:0,
      samples=30,
      thick,
      red!90!black,
    ]
    ({x},{sin(deg(x))},{(x < -0.5) * (x + 0.5)^4 +
      (x > 0.5) * (x - 0.5)^4 + 5});

    \addplot3[
      domain=-2:2,
      y domain=0:0,
      samples=20,
      dashed,
      thick,
      red!90!black,
    ]
    ({x},{sin(deg(x))},{0});

    \addplot3[
      domain=-2:2,
      y domain=0:0,
      samples=20,
      thick,
      OliveGreen!90!black,
    ]
    ({x},{0},{
      (x < -0.5) * (x + 0.5)^4 +
      (x > 0.5) * (x - 0.5)^4 +
      (sin(deg(x)))^2 + 5});

    \addplot3[
      domain=-2:2,
      y domain=0:0,
      samples=20,
      dashed,
      thick,
      OliveGreen!90!black,
    ]
    ({x},{0},{0});

    \addplot3[
      domain=-2:2,
      y domain=0:0,
      samples=20,
      thick,
      Fuchsia!95!black,
    ]
    ({x},{x},{(x < -0.5) * (x + 0.5)^4 +
      (x > 0.5) * (x - 0.5)^4 +
      (x - sin(deg(x)))^2 + 5});

    \addplot3[
      domain=-2:2,
      y domain=0:0,
      samples=20,
      dashed,
      thick,
      Fuchsia!95!black,
    ]
    ({x},{x},{0});
    
    \addplot3[
      thick,
      color=black,
      domain=-0.5:0.5,
      y domain=0:0,
      samples=100,
    ]
    ({x}, {sin(deg(x))}, {0});
    \node[anchor=north] at (axis cs:0.2,{sin(deg(0))},0) {$\mathcal{S}$};
    
    \addplot3[
      thick,
      color=black,
      domain=-0.5:0.5,
      opacity=0.8,
      y domain=0:0,
      samples=100,
    ]
    ({x}, {sin(deg(x))}, {5});
    
    \addplot3[
      dotted,         
      line width=0.5pt,
      black,
      forget plot
    ]
    coordinates {
      (-0.5,sin(deg(-0.5)),0)
      (-0.5,sin(deg(-0.5)),5)
    };
    \addplot3[
      dotted,         
      line width=0.5pt,
      black,
      forget plot
    ]
    coordinates {
      (0.5,sin(deg(0.5)),0)
      (0.5,sin(deg(0.5)),5)
    };
    
    \legend{
    \text{Ambient Surface of \text{$f$}}, \text{$f(x_1,x_2)=\varphi(x_1) + (x_2 - \sin(x_1))^2$},
    \text{$F_1(x_1) = f(x_1, \Psi_1(x_1))$}, \text{$\gM_{F_1} = \{(x_1,\sin(x_1)): x_1\in\R\}$},
    \text{$F_2(x_1) = f(x_1, \Psi_2(x_1))$}, \text{$\gM_{F_2} = \{(x_1,0): x_1\in\R\}$},
    \text{$F_3(x_1) = f(x_1, \Psi_3(x_1))$}, \text{$\gM_{F_3} = \{(x_1,x_1): x_1\in\R\}$},
    }
    
  \end{axis}
\end{tikzpicture}\caption{%
Visualisation of the ambient function $f(x_1,x_2) = \varphi(x_1) + (x_2 - \sin(x_1))^2$, with $\varphi$ a flat-bottom quartic, and three lifted restriction curves corresponding to different reduction mappings $\Psi_i(x_1)$. The black curve $\gS$ denotes the global minimisers of $f$, forming a non-isolated solution manifold parametrised by $x_1\in[-0.5,0.5]$ and $x_2 = \sin(x_1)$. Each lifted curve lies on the surface $f$, and the dashed curves on the base plane show the image of each corresponding manifold $\gM_{F_i}$.
}
\label{fig:geometric-example-2}
\end{figure}

\begin{figure}[htbp]
\centering
\begin{tikzpicture}
\begin{axis}[
    xlabel={$x_1$},
    ylabel={Curvature $\kappa$},
    domain=-1.5:1.5,
    samples=300,
    width=14cm,
    height=8cm
]

\addplot[black, thick, dashed, domain=-1.5:1.5] {4} node[pos=0.05, above] {$\max\kappa = 4$};
\addlegendentry{Unconstrained};

\draw[dashed, blue] (axis cs:-1.5,2) -- (axis cs:1.5,2) node[pos=0.05, below] {$\max \kappa = 2$};
\addplot[blue, thick, forget plot, domain=-1.5:-0.5] {(12*(x+0.5)^2 + 2*cos(deg(2*x)))/(1 + (4*(x+0.5)^3 + sin(deg(2*x)))^2)^(3/2)};
\addplot[blue, thick, forget plot, domain=0.5:1.5] {(12*(x-0.5)^2 + 2*cos(deg(2*x)))/(1 + (4*(x-0.5)^3 + sin(deg(2*x)))^2)^(3/2)};
\addplot[blue, thick, domain=-0.5:0.5] {2*cos(deg(2*x))/(1 + sin(deg(2*x))^2)^(3/2)};
\addlegendentry{Fixed}

\draw[dashed, red] (axis cs:-1.5,2.22) -- (axis cs:1.5,2.22) node[pos=0.95, above] {$\max \kappa \approx 2.22$};
\addplot[red, thick, forget plot, domain=-1.5:-0.5] {(12*(x+0.5)^2 + 2*((1-cos(deg(x)))^2 + (x-sin(deg(x)))*sin(deg(x))))/(1 + (4*(x+0.5)^3 + 2*(x-sin(deg(x)))*(1-cos(deg(x))))^2)^(3/2)};
\addplot[red, thick, forget plot, domain=-0.5:0.5] {2*((1-cos(deg(x)))^2 + (x-sin(deg(x)))*sin(deg(x)))/(1 + (2*(x-sin(deg(x)))*(1-cos(deg(x))))^2)^(3/2)};
\addplot[red, thick, domain=0.5:1.5] {(12*(x-0.5)^2 + 2*((1-cos(deg(x)))^2 + (x-sin(deg(x)))*sin(deg(x))))/(1 + (4*(x-0.5)^3 + 2*(x-sin(deg(x)))*(1-cos(deg(x))))^2)^(3/2)};
\addlegendentry{Linear}

\draw[dashed, OliveGreen] (axis cs:-1.5,2.14) -- (axis cs:1.5,2.14) node[pos=0.05, above] {$\max \kappa \approx 2.14$};
\addplot[OliveGreen, thick, forget plot, domain=-1.5:-0.5] {12*(x + 0.5)^2 / (1 + 16*(x + 0.5)^6)^(3/2)};
\addplot[OliveGreen, thick, forget plot, domain=0.5:1.5] {12*(x - 0.5)^2 / (1 + 16*(x - 0.5)^6)^(3/2)};
\addplot[OliveGreen, thick, domain=-0.5:0.5] {0};
\addlegendentry{Nonlinear};

\end{axis}
\end{tikzpicture}
\caption{
Curvature comparison across different reduction mappings applied to the function $f(x_1, x_2) = \varphi(x_1) + (x_2 - \sin(x_1))^2$, where $\varphi$ is flat on $[-0.5, 0.5]$ and quartic outside. The black dashed line represents the maximum eigenvalue of the full Hessian when no reduction is applied. The blue curve shows the curvature induced by fixing $x_2 = 0$, resulting in high curvature due to mismatch with the optimal $x_2 = \sin(x_1)$. The red curve corresponds to the linear reduction $x_2 = x_1$, which aligns more closely with $\sin(x_1)$ and yields lower curvature. The green curve represents the bilevel (implicit) reduction $x_2 = \sin(x_1)$, which eliminates coupling and flattens the function inside the flat region, driving curvature to zero. Each dashed horizontal line indicates the maximum curvature for its corresponding method.
}
\label{fig:curvature-comparison-example-2}
\end{figure}

\subsection{Constant Mappings: A Special Case}
\label{appdx:subsec:constant-mappings}
As a special case of our reduction framework, consider a constant mapping of the form
\begin{equation}
\Psi(x_1) = \bar{x}_2\,,\quad \text{for some fixed } \bar{x}_2 \in \R^{n_2}\,.
\end{equation}
The reduced function is then
\begin{equation}
F(x_1) = f\bigl(x_1, \bar{x}_2\bigr)\,,
\end{equation}
with reparametrisation map $\Phi(x_1) = (x_1, \bar{x}_2)$, whose Jacobian satisfies
\begin{equation}
\Der\Phi(x_1) = \begin{pmatrix} I_{n_1} \\ 0 \end{pmatrix} \in \R^{(n_1 + n_2) \times n_1},
\quad
\Der\Phi(x_1)^\top \Der\Phi(x_1) = I_{n_1}.
\end{equation}
This isometric property ensures that the geometry of the reduced space is Euclidean, and hence optimisation proceeds without distortion or the need for metric preconditioning.

Although the mapping is static, it completely eliminates the $x_2$-directions. If the original function $f$ exhibits strong curvature in those directions, the reduced function $F$ may be significantly better conditioned. In this way, constant mappings can yield nontrivial curvature reduction, even without adapting to the structure of $f$ as $x_1$ varies.

Under standard assumptions (\eg, smoothness and P{\L} conditions), the reduced function $F$ can inherit the same convergence guarantees as affine mappings, provided the non-tangency condition and a uniform spectral gap hold. The key advantage here is that these guarantees apply directly in the Euclidean setting, without requiring any geometric correction.

The main limitation of constant mappings lies in their lack of adaptivity. Since the mapping is fixed, it cannot adjust to the optimal choice of $x_2^*(x_1)$ as $x_1$ evolves during optimisation. In settings where $x_1$ and $x_2$ are tightly coupled, this can lead to misalignment between the reduced function $F(x_1) = f(x_1, \bar{x}_2)$ and the true solution manifold of $f$. While $\bar{x}_2$ may belong to the global minimiser set of $f$, it need not remain optimal for all values of $x_1$. As a result, $F$ may fail to faithfully reflect the structure of $f$ away from neighbourhoods where $(x_1, \bar{x}_2) \in \gS$, potentially slowing convergence or yielding suboptimal solutions.

This scenario frequently arises in overparametrised models with symmetric minimiser sets. For instance, in deep neural networks, the optimal classifier weights often lie in the set of Equiangular Tight Frames (ETFs). A constant mapping implicitly selects a particular representative $\bar{x}_2$ in this set and holds it fixed throughout training. While this preserves global optimality in principle, it breaks the symmetry in a way that may be incompatible with the evolution of $x_1$. The reduced function $F$ thus becomes a projection of $f$ onto a fixed slice of the solution manifold, which may not be well-aligned with the optimisation trajectory unless additional structure or coordination is imposed.

In summary, constant mappings offer a geometrically clean and computationally simple reduction strategy. They can yield strong local convergence when well-aligned with the solution structure, and are especially attractive in models where redundant variables can be safely eliminated. However, their rigidity makes them less suitable for problems requiring global coordination between variables or adaptivity along the optimisation path. In practice, they serve as a valuable analytic baseline and an effective modelling choice when the structure of the solution is known or can be fixed a priori.

\section{Extended Related Work}
\label{appdx:ext-related}
Reparametrisations are widely adopted to exploit problem structure and improve the conditioning of nonconvex optimisation problems~\cite{levin2025effect}. Our framework offers a unified lens to view these diverse approaches as special cases of reduction mappings—whether constant, affine, nonlinear, explicitly defined, or arising implicitly—and systematically analyses their effect on problem conditioning and the convergence of first-order methods. Below, we outline several representative cases.

\paragraph{Normalisation and induced feature geometry.} 
A canonical example is normalisation layers, such as batch normalisation~\cite{ioffe2015batch} or layer normalisation~\cite{ba2016layer}, which perform explicit nonlinear mappings that constrain the scale and centring of activations or parameters. These reparametrisations have been shown to accelerate convergence by implicitly altering the smoothness and sharpness constants~\cite{santurkar2018does}. In our framework, this can be interpreted as introducing a reduction mapping $\Psi$ that enforces a fixed feature geometry on specific variables, such as unit variance or zero mean.

\paragraph{Neural collapse and ETF reparametrisations.} 
In the context of neural collapse, our framework naturally captures both prevalent approaches. Fixing the classifier to an equiangular tight frame (ETF) corresponds to a constant reduction mapping that constrains the classifier parameters to a predefined geometric structure~\cite{zhu2021geometric}. Alternatively, dynamically solving for the nearest ETF given the current features has been shown to accelerate convergence~\cite{markou2024guiding}, and fits within our framework as a nonlinear or implicitly defined mapping, enforcing optimal alignment with the feature space. Such reduction mappings can also been extended to regression settings~\cite{andriopoulos2024prevalence}. In all cases, our analysis applies directly, providing theoretical guarantees on improved conditioning and convergence.

\paragraph{Optimisation on quotient manifolds and gauge fixing.} 
In optimisation over quotient manifolds~\cite{absil2008optimization, boumal2023introduction}, the goal is to account for known invariances by operating directly on equivalence classes, such as optimising over the Grassmannian or Stiefel quotient manifolds. While these approaches formally respect the problem's intrinsic symmetries, they can incur significant per-iteration costs due to expensive manifold operations like projections and retractions. Our framework provides an alternative by enabling explicit gauge fixing via reduction mappings, where one selects a representative from each equivalence class---for example, fixing scaling or orthogonal symmetry through a carefully designed mapping $\Psi$. This strategy can be viewed as a symmetry-breaking mechanism that reduces the optimisation problem to a lower-dimensional space equipped with the pullback metric induced by the reduction mapping. This allows standard first-order methods to operate more efficiently while still respecting the problem's local geometry. Thus, our approach can potentially offer a lightweight and practical alternative to quotient manifold optimisation, particularly suitable for large-scale or deep learning scenarios where iteration efficiency is paramount.

\paragraph{Preconditioned methods and metric-aware optimisation.}
Preconditioning is a classical approach in optimisation and numerical linear algebra to accelerate convergence by modifying the problem geometry through a change of metric~\cite{nocedal2006numerical}. This perspective underpins variable metric and quasi-Newton methods~\cite{nocedal2006numerical}, where gradient steps are taken relative to a preconditioner that improves conditioning. In deep learning, natural gradient descent~\cite{amari1998natural} formalises this idea by using the Fisher information matrix as a Riemannian metric aligned with the model's statistical structure, with recent works showing that such methods can achieve accelerated convergence in overparametrised neural networks~\cite{zhang2019fast}. Similarly, recent efforts have revisited scalable preconditioning strategies, such as Kronecker-factored methods~\cite{martens2015optimizing}, Shampoo~\cite{gupta2018shampoo}, and Muon~\cite{jordan2024muon}, which exploit problem structure to efficiently approximate second-order information in large-scale settings. More broadly, the geometry of parameter spaces under reparametrisation has been studied to understand how such choices influence conditioning and learning dynamics~\cite{kristiadi2023geometry}. Our framework complements and generalises these approaches by introducing reduction mappings as a geometric preconditioning mechanism, where the pullback metric induced by $\Phi$ serves as an intrinsic, problem-adaptive preconditioner. This allows first-order methods to exploit problem geometry efficiently, without explicit second-order computations, while benefiting from strictly improved condition numbers. Moreover, our analysis applies uniformly to both affine and nonlinear mappings, unifying classical preconditioning with modern geometry-aware methods and extending their reach to a broader class of structured nonconvex problems.

\paragraph{Reduction mappings and problem dimensionality reduction.}
Classical strategies for problem simplification often rely on variable elimination techniques, such as projection methods, partial optimisation, or block coordinate descent, where certain variables are explicitly or implicitly optimised out to reduce problem dimensionality~\cite{bertsekas1997nonlinear}. A more general formulation arises in bilevel optimisation, where the inner problem implicitly defines a mapping from the outer variables to the lower-level solution~\cite{gould2016differentiating}. In these settings, implicit differentiation offers a principled mechanism to compute gradients of the reduced objective, effectively collapsing the problem to the outer variables alone. Our framework generalises and unifies these approaches by viewing them as reduction mappings—whether explicitly defined or arising from inner problems solved to optimality—and providing a geometric lens to analyse their impact on problem conditioning. Crucially, our focus extends beyond mere dimensionality reduction, offering sharp characterisations of how these mappings improve local smoothness and sharpness constants of the reduced problem, and thus enhance the convergence rates of first-order methods.

\section{Proof of Main Theorems}
\label{appdx:main-proofs}
This section contains the complete proofs of the main results stated in the paper. Each proof is presented in a separate subsection for clarity. Some of the arguments rely on technical lemmas, which are deferred to Appendix~\ref{appdx:supprt-lemmas} for readability.

\subsection{Proof of Theorem~\ref{thm:better-smoothness-affine}}
\begin{proof}
Since $F(x_1) = f\bigl(x_1, \Psi(x_1)\bigr)$, we have the \emph{reduction mapping}
\[
\Phi(x_1) \coloneqq \begin{pmatrix} x_1 \\ \Psi(x_1) \end{pmatrix} \in \R^{n_1 + n_2}\,.
\]
By the chain rule, the gradient of $F$ is given by
\[
\nabla F(x_1) = \Der\Phi(x_1)^\top \nabla f\bigl(\Phi(x_1)\bigr) \in \R^{n_1}\,,
\]
where the Jacobian of $\Phi$ has the form
\[
\Der\Phi(x_1) = \begin{pmatrix} I_{n_1} \\ J(x_1) \end{pmatrix} \in \R^{(n_1 + n_2) \times n_1}, \quad\text{with } J(x_1) \coloneqq \Der \Psi(x_1) \in \R^{n_2 \times n_1}\,,
\]
and the gradient of $f$ is $\nabla f\bigl(\Phi(x_1)\bigr) \in \R^{n_1 + n_2}$.

We define a Riemannian metric on $\R^{n_1}$ induced by $\Phi$ (\ie, a pullback metric) as 
\[
g(u,v) \coloneqq \langle u, v\rangle_{R} = u^\top R\,v,\quad\text{with}\quad R \coloneqq \Der \Phi(x_1)^\top \Der\Phi(x_1)\,.
\]
Because the solution mapping $\Psi(x_1)$ is assumed affine, $R$ is constant. Also, we have $\Der^2\Psi(x_1) = 0$, so no additional correction term appears in the Hessian of $F$.

Differentiating $\nabla F(x_1)$ with respect to $x_1$ yields (see Lemma~\ref{lem:restricted_hessian})
\[
\nabla^2 F(x_1) = \Der\Phi(x_1)^\top \nabla^2 f\Bigl(\Phi(x_1)\Bigr) \Der\Phi(x_1)\,.
\]
Define $A \coloneqq \nabla^2 f\Bigl(\Phi(x_1)\Bigr)$ and note that $A|_\gV \coloneqq \nabla^2 F(x_1)$ is the restriction of $A$ to the $n_1$-dimensional subspace
\[
\gV = \operatorname{im}\left(\Der\Phi(x_1)\right) = \tangent{\Phi(x_1)}{\gM_F^\loc}\,.
\]
We define the following operator norms
\[
\|A\| =\max_{\|w\|=1}\bigl|w^\top A\,w\bigr|
=\max\{\maxEigen(A),\,-\minEigen(A)\} = \maxSigma(A)\,,
\]
and
\[
\|A|_{\gV}\|_R =\max_{\substack{w\in\gV\\\|w\|=1}} \bigl|w^\top A\,w\bigr|
=\max\Bigl\{\;\maxEigen(A|_{\gV}),\;-\minEigen(A|_{\gV})\Bigr\} = \maxSigma(A|_\gV)\,.
\]

Also, by definition, the Lipschitz constant of $\nabla f$ is $\beta_f = \sup_{x\in\gN} \|A\|$, and, similarly, the Lipschitz constant for the Riemannian gradient $\nabla F$ is $\beta_F = \sup_{x_1\in\gM_F^\loc} \|A|_\gV\|_R$.

\medskip
\textbf{Step 1. Restriction of the Hessian.} \\
By the Rayleigh quotient characterisation (see Lemma~\ref{lem:rayleigh-quotient-immersed}), we have
\[
\maxEigen(A|_{\gV})\le\maxEigen(A),
\qquad
\minEigen(A|_{\gV})\ge\minEigen(A)\,,
\]
so,
\[
\maxSigma(A|_\gV) \le \maxSigma(A)\,,
\]
with equality if and only if there exists a unit vector $w\in\gV$ satisfying
\[
|w^\top A w| = \maxSigma(A)\,.
\]
Since $\maxSigma(A)$ is achieved only along vectors in the maximum singular-vector subspace
\[
\Sigma_{\max} = \mathrm{span}\{v \in \gN: Av = \lambda v,\, |\lambda| = \maxSigma(A)\}\,,
\]
we have equality if and only if some unit vector $w\in\gV$ is in $\Sigma_{\max}$.

\medskip
\textbf{Step 2. Uniform Non-Tangency of $\Sigma_{\max}$ with $\gV$.} \\
The uniform non-tangency condition (where its general validity is expressed precisely in Corollary~\ref{cor:uniform-angle-max} by setting $\varepsilon = 1 - \theta$) implies that for every unit vector $v\in \Sigma_{\max}$ there exists a uniform $\varepsilon>0$ such that
\[
\|\mathrm{P}_\gV\, v\| \leq 1 - \varepsilon\, .
\]
Now, consider any unit vector $w\in\gV$. Since $w$ lies in $\gV$, we can use the following argument for any unit vector $v\in \Sigma_{\max}$:
\begin{enumerate}
    \item Write the vector $v\in \Sigma_{\max}$ in its orthogonal decomposition with respect to $\gV$
    \[
    v = \mathrm{P}_\gV\, v + (I - \mathrm{P}_\gV)\, v\,.
    \]
    \item Since $w\in \gV$, it is orthogonal to every vector in the orthogonal complement, in particular to $(I - \mathrm{P}_\gV )v$. Therefore, we have,
    \[
    w^\top v = w^\top \left[\mathrm{P}_\gV\, v + (I - \mathrm{P}_\gV)\, v\right] = w^\top \mathrm{P}_\gV\, v\, .
    \]
    \item Applying the Cauchy-Schwartz inequality, we obtain
    \[
    |w^\top v| = |w^\top (\mathrm{P}_\gV\, v)| \leq \|w\|\|\mathrm{P}_\gV\, v\| = \|\mathrm{P}_\gV\, v\|\, . 
    \]
    Since $\|w\| = 1$ and by the uniform non-tangency condition, it follows that
    \[
    |w^\top v| \leq 1 - \varepsilon\,.
    \]
\end{enumerate}

\medskip
\textbf{Step 3. Bounding the Absolute Rayleigh Quotient}\\
Let $A=\sum_{i=1}^n\lambda_i\,v_i v_i^top$ be an orthonormal eigendecomposition of $A$ with eigenvectors $\{v_i\}$, and define $\sigma_i=|\lambda_i|$, ordered so that $\sigma_1=\ldots=\sigma_p \eqqcolon \maxSigma(A) > \sigma_{p+1}\ge \ldots \ge \sigma_n$. For any unit vector $w\in\gV$, write
\[
w=\sum_{i=1}^n\alpha_i\,v_i,\qquad\text{so that}\qquad \sum\alpha_i^2=1\,.
\]
Let $\theta\coloneqq\sum_{i=2}^p \alpha_i^2$ be the squared norm of the projection of $w$ onto the top singular subspace. Then
\[
\bigl|w^\top A\,w\bigr|
=\Bigl|\sum_{i=1}^n\lambda_i\,\alpha_i^2\Bigr|
\le
\sum_{i=1}^n\sigma_i\,\alpha_i^2
=\maxSigma(A)\,\theta^2 + \sum_{i=p+1}^n\sigma_i\alpha_i^2\,.
\]
Since $\sigma_i\le \sigma_{p+1}$ for $i>p$, the tail sum is bounded:
\[
\sum_{i>p}\sigma_i\alpha_i^2 \leq \sigma_{p+1}(1-\theta^2)\,,
\]
yielding
\[
\bigl|w^\top A\,w\bigr|
\le \maxSigma(A)\,\theta^2 + \sigma_{p+i}(1-\theta^2)\,.
\]
Using the uniform non-tangency condition $\theta^2 \le (1-\varepsilon)^2$ and taking the maximum over all $w\in\gV$, we obtain
\[
\maxSigma(A|_\gV) = \max_{w\in\gV,\, \|w\|=1} |w^\top A\, w| \leq \maxSigma(A)\,(1 - \varepsilon)^2 + \sigma_{p+1}\,\left[1 - (1 - \varepsilon)^2\right]\,.
\]

\medskip
\textbf{Step 4. Expressing the Gap via the Singular-Value Gap}\\
We define the spectral gap between the largest eigenvalue and the second largest as 
\[
\Delta_{\max} \leq \maxSigma(A) - \sigma_{p+1}\,.
\]
Rewriting $\sigma_{p+1} \leq \maxSigma(A) - \Delta_{\max}$, we have,
\begin{align*}
    \maxSigma(A|_\gV) &\leq \maxSigma(A)\,(1 - \varepsilon)^2 + (\maxSigma(A) - \Delta_{\max})\,\left[1 - (1 - \varepsilon)^2\right]\\
    &= \maxSigma(A) - \Delta_{\max}\left[1 - (1 - \varepsilon)^2\right]\\
    &= \maxSigma(A) - \Delta_{\max} (2\varepsilon - \varepsilon^2)\,.
\end{align*}

Taking the supremum over all points in each domain, it follows that
\[
\beta_F = \sup_{x_1\in\gM_F^\loc} \maxSigma\bigl(A|_\gV\bigr) \leq  \sup_{x\in\gN} \maxSigma(A) - \Delta_{\max} (2\varepsilon - \varepsilon^2) = \beta_f - \Delta_{\max} (2\varepsilon - \varepsilon^2)\,.
\]
Thus, by our assumption that $\Delta_{\max} > 0$, we obtain the desired strict inequality:
\[
\beta_F \leq \beta_f - \Delta_{\max}(2\varepsilon - \varepsilon^2) < \beta_f.
\]
This completes the proof.
\end{proof}

\subsection{Proof of Corollary~\ref{cor:eucl-smoothness}}
\begin{proof}
The result follows directly from Corollary~\ref{cor:euclidean-quotient}. Applying this to $\nabla^2F(x_1)$ and take the supremum over $x_1\in\gM_F^{\loc}$ yields
\[
\beta_F^{(E)} =\sup_{x_1}\bigl\|\nabla^2F(x_1)\bigr\| \le M^{(\Phi)}\;\sup_{x_1}\bigl\|\nabla^2F(x_1)\bigr\|_{R} =M^{(\Phi)}\,\beta_F\,.
\]
Finally, substitute the bound $\beta_F\le\beta_f-\Delta_{\max}(2\varepsilon-\varepsilon^2)$ from Theorem~\ref{thm:better-smoothness-affine} to conclude
\[
\beta_F^{(E)} \le M^{(\Phi)}\bigl[\beta_f-\Delta_{\max}(2\varepsilon-\varepsilon^2)\bigr] < M^{(\Phi)}\,\beta_f\,,
\]
with $M^{(\Phi)}$ be equal to 2 by the non-expansive property of the projection mapping $\Psi$.
\end{proof}

\subsection{Proof of Theorem~\ref{thm:better-smoothness-nonlinear}}
\begin{proof}
From Lemma~\ref{lem:restricted_hessian}, we have that the Hessian of $F$ can be written as
\[
\nabla^2 F(x_1)=\Der\Phi(x_1)^\top \nabla^2 f\bigl(\Phi(x_1)\bigr)\Der\Phi(x_1) + \C(x_1)\,,
\]
where the correction term due to the nonlinearity of $\Psi(x_1)$ is
\[
\C(x_1)= \Der^2\Psi(x_1)\bullet \nabla_{x_2} f\bigl(\Phi(x_1)\bigr)\,.
\]
Here $\bullet$ denotes the contraction of the third-order Hessian tensor $\Der^2\Psi\in\R^{n_2\times n_1\times n_1}$ with the gradient vector $\nabla_{x_2}f\in\R^{n_2}$ over the first index, that is,
\[
\C(x_1) = \sum_{k=1}^{n_2} \Der^2 \Psi_{k,:,:}(x_1) \cdot \frac{\partial f}{\partial x_{2,k}}\bigl(\Phi(x_1)\bigr)\,.
\]
Define $A \coloneqq \nabla^2f(\Phi(x_1))$, $B \coloneqq \Der\Phi(x_1)^\top \nabla^2 f\bigl(\Phi(x_1)\bigr)\Der\Phi(x_1)$, and the restriction of $A$ into the tangent space of $\gV=\tangent{\Phi(x_1)}{\gM_F^\loc}$ as $A|_\gV = \nabla^2F(x_1)$, such that
\[
A|_\gV = B + \C\,.
\]
Since $A|_\gV$ and $B$ are symmetric, it follows that $\C(x_1)$ is a symmetric matrix. For $B, \C$ symmetric matrices, we apply the Weyl's inequality for the pullback metric $R$
\[
\maxSigma(B + \C) \leq \maxSigma(B) + \|\C\|_R\,.
\]
From Theorem~\ref{thm:better-smoothness-affine}, we have
\[
\maxSigma(B) \leq \maxSigma(A) - \Delta_{\max}\,(2\varepsilon - \varepsilon^2)\,,
\]
so,
\[
\maxSigma(A|_\gV) \leq \maxSigma(A) - \Delta_{\max}\,(2\varepsilon - \varepsilon^2) + \|\C\|_R\,,
\]
where we have $\|C\|_R \le \|C\|/m^{(\Phi)}$. We need to control now the Euclidean operator norm of the correction term $\C$. By the sub-multiplicative property of the norm and our assumption on the bounds, we get
\[
\|\C\| \le \|\Der^2\Psi(x_1)\|\,\|\nabla_{x_2} f(\Phi(x_1))\| \le Q\, Z\, ,
\]
where since $\Der^2\Psi(x_1)$ is a higher order tensor, we take the appropriate norm for the sub-multiplicative property to hold. More specifically, a standard choice is to use the tensor operator or injective norm. If we have an $N$-th order tensor $\tT\in\R^{n_1\times n_2\times\ldots\times n_N}$ its operator norm is defined by
\[
\|\tT\| = \sup\{|\tT(x^{(1)}, x^{(2)},\ldots, x^{(N)})| : \|x^{(i)}\| = 1, \,\forall \,i\}\,,
\]
where $\tT(x^{(1)}, x^{(2)},\ldots, x^{(N)})$ is the scalar obtained by contracting $\tT$ with the vectors $x^{(1)},\ldots, x^{(N)}$.

We also have the uniform curvature gap defined as 
\[
\delta = \maxSigma(A) - \maxSigma(B) \geq \eta\,.
\]

Since the bound $Q\,Z$ on the correction term is assumed to be strictly less than the curvature gap $\delta$, the improvement in smoothness obtained by projecting onto the tangent space is preserved. In other words, the additional curvature introduced by the nonlinearity does not overwhelm the inherent reduction from the projection, ensuring that the overall smoothness constant of the reduced function remains strictly below $\beta_f$.

Formally, by taking the supremum over the region of interest, the Lipschitz constant $\beta_F$ of the Riemannian gradient $\nabla F$ satisfies
\[
\beta_F \le \beta_f -\Delta_{\max} (2\varepsilon - \varepsilon^2) + \frac{Q\, Z}{m^{(\Phi)}} < \beta_f\,.
\]
For projection mappings $\Psi$, we have $0\preceq \Der \Psi^\top\Der\Psi \preceq I$, and $I \preceq\Der\Phi^\top \Der\Phi \preceq 2I$. Hence $m^{(\Phi)} = 1$. This completes the proof.
\end{proof}

\section{Additional Theoretical Results}
\label{appdx:additional-theorems}
This appendix presents supplementary theoretical results that support the main developments in the paper. While not required for the core proofs, these results offer deeper insight into the structure of reduction mappings and their implications for optimisation and geometric analysis.

Specifically:
\begin{itemize}
    \denselist
    \item Section~\ref{appdx:subsec:correction-term} provides a quantitative bound on the correction term induced by composing a function $f$ with a smooth minimising mapping $\Psi$, derived via the implicit function theorem.
    \item Section~\ref{appdx:subsec:preservation-MB} establishes that Morse--Bott structure is preserved under reduction, assuming mild regularity and a clean intersection condition.
\end{itemize}

\subsection{Correction Term Bound Under Argmin Mappings}
\label{appdx:subsec:correction-term}

The proof of the main result below relies on expressions for the first- and second-order derivatives of implicit argmin mappings. These are provided in Appendix~\ref{appdx:supprt-lemmas}, Subsection~E.1.

\begin{theorem}[Quantitative Bound on the Correction Term]
\label{thm:correction-term-bound}
Let $f:\R^n\to\R$ be $C^2$, and let $G:\R^{n_1}\times\R^{n_2}\to\R$ be $C^3$. Assume that for each
$x_1$ in an open neighbourhood $\gU\subset\R^{n_1}$, the solution mapping
\[
  \Psi(x_1)=\argmin_{u\in\R^{n_2}}\;G\bigl(x_1,u\bigr)\,,
\]
is well‑defined and $C^2$. Suppose further that for every $x_1\in\gU$:
\begin{enumerate}
  \item[\textbf{(i)}] \textbf{Strict second-order sufficient condition (SSOSC).}  
    \[
      H(x_1) \coloneqq \nabla^2_{u}G\bigl(x_1,\Psi(x_1)\bigr)  \succeq     \sigma\,I_{n_2}, \quad \sigma>0\,.
    \]
  \item[\textbf{(ii)}] \textbf{Bounded higher‑order derivatives.}  There exist constants
    $L_{12},L_{21},L_3,L_{11}>0$ such that at $(x_1, \Psi(x_1))$,
    \[
      \bigl\|\nabla^3_{x_1^2u}G\bigr\|\le L_{12},\quad
      \bigl\|\nabla^3_{x_1u^2}G\bigr\|\le L_{21},\quad
      \bigl\|\nabla^3_{u^3}G\bigr\|\le L_3,\quad
      \bigl\|\nabla^2_{x_1u}G\bigr\|\le L_{11}.
    \]
\end{enumerate}

Write
\[
  \nabla f=(\nabla_{x_1}f,\nabla_{x_2}f),
  \quad
  v(x_1)=\nabla_{x_2}f\bigl(x_1,\Psi(x_1)\bigr),
  \quad
  \|\nabla f(x_1,\Psi(x_1))\|\le L_f.
\]
Define
\[
  \xi =
  \begin{cases}
    \dfrac{\|v(x_1)\|}{\|\nabla f(x_1,\Psi(x_1))\|}, 
      & \|\nabla f(x_1,\Psi(x_1))\|>0,\\[1em]
    0, & \text{otherwise},
  \end{cases}
  \quad
  \xi\in[0,1]\,.
\]
Let
\[
  \Der^2\Psi(x_1)\colon \R^{n_1}\times\R^{n_1}\to\R^{n_2},
  \quad
  \C(x_1)\colon \R^{n_1}\times\R^{n_1}\to\R\,,
\]
be defined by
\[
  \C(x_1)(h,k) =\Bigl\langle v(x_1),\;\Der^2\Psi(x_1)[h,k]\Bigr\rangle.
\]
Define the operator norms
\[
  \|\Der^2\Psi(x_1)\|
  =\sup_{\|h\|=\|k\|=1}\bigl\|\Der^2\Psi(x_1)[h,k]\bigr\|,
  \quad
  \|\C(x_1)\|
  =\sup_{\|h\|=\|k\|=1}\bigl|\C(x_1)(h,k)\bigr|\,,
\]
and set
\[
  \tilde L =\sigma\,L_{12} + L_{21}\,L_{11} + \frac{L_3\,L_{11}^2}{\sigma}\,.
\]
Then for every $x_1\in\gU$, the following bounds hold:
\[
  \|v(x_1)\|\le \xi\,L_f,
  \quad
  \|\Der^2\Psi(x_1)\|\le \frac{\tilde L}{\sigma^2},
  \quad
  \|\C(x_1)\|\le \frac{\tilde L}{\sigma^2}\,\xi\,L_f\,.
\]
Moreover, if one regards $\Der^2\Psi(x_1)$ as a linear map
$\mathrm{Sym}^2(\R^{n_1})\to\R^{n_2}$ with top singular subspace
$\Sigma\subset\R^{n_2}$ and defines $\cos\theta=\|\mathrm{P}_\Sigma\,v(x_1)\|/\|v(x_1)\|$, then the refined estimate
\[
  \|\C(x_1)\| \le \frac{\tilde L}{\sigma^2}\,\|v(x_1)\|\,\cos\theta
  \le\frac{\tilde L}{\sigma^2}\,\xi\,L_f\,\cos\theta
\]
also holds.
\end{theorem}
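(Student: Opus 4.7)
The plan is to derive all four bounds from a single source: the implicit function theorem applied to the first-order optimality condition $\nabla_u G(x_1, \Psi(x_1)) = 0$, together with the SSOSC and the prescribed tensor-norm bounds in hypothesis (ii). The first bound, $\|v(x_1)\| \le \xi L_f$, is immediate from the definition of $\xi$ as the ratio $\|\nabla_{x_2} f\|/\|\nabla f\|$ together with $\|\nabla f(x_1, \Psi(x_1))\| \le L_f$; the case $\|\nabla f\|=0$ forces $v=0$ and $\xi=0$, so the inequality still holds.

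For $\|\Der^2\Psi\|$, I would first differentiate the optimality condition once to obtain $\Der\Psi(x_1) = -H(x_1)^{-1}\,\nabla^2_{x_1 u} G(x_1, \Psi(x_1))$. Hypothesis (i) gives $\|H^{-1}\| \le 1/\sigma$, which combined with the bound on $\nabla^2_{x_1 u} G$ yields $\|\Der\Psi(x_1)\| \le L_{11}/\sigma$. Differentiating again (the tensor calculation is carried out in Appendix~\ref{appdx:supprt-lemmas}) produces a schematic expression
\[
\Der^2\Psi(x_1) = -H(x_1)^{-1}\!\left[\nabla^3_{x_1^2 u}G + \nabla^3_{x_1 u^2}G[\Der\Psi] + \nabla^3_{u^3}G[\Der\Psi,\Der\Psi]\right],
\]
with all derivatives evaluated at $(x_1,\Psi(x_1))$. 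Applying sub-multiplicativity of the tensor operator norms and substituting the bound on $\|\Der\Psi\|$ produces a bound whose numerator is a polynomial in $L_{11}/\sigma$; after pulling the factor $1/\sigma$ from $H^{-1}$ inside, one collects the three contributions into exactly $\tilde L/\sigma^2$, as claimed.

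The third inequality follows by viewing $\C(x_1)(h,k) = \langle v(x_1),\,\Der^2\Psi(x_1)[h,k]\rangle$ and using Cauchy--Schwarz together with the operator norm of $\Der^2\Psi(x_1)$ as a bilinear map into $\R^{n_2}$; this gives $\|\C(x_1)\| \le \|v(x_1)\|\,\|\Der^2\Psi(x_1)\|$, and plugging in the previous two bounds yields the stated estimate. For the refined inequality, I decompose $v(x_1) = \mathrm{P}_\Sigma v(x_1) + v_\perp$ with $v_\perp$ orthogonal to $\Sigma$. Since the operator norm of $\Der^2\Psi(x_1)$ is attained by inputs whose image lies in the top singular subspace $\Sigma$ (by the variational characterisation of singular values), and since $v_\perp$ is orthogonal to $\Sigma$, the inner product $\langle v_\perp,\,\Der^2\Psi(x_1)[h,k]\rangle$ contributes only at the next singular value; taking the dominant contribution gives $|\C(x_1)(h,k)| \le \|\mathrm{P}_\Sigma v(x_1)\|\,\|\Der^2\Psi(x_1)\| = \|v(x_1)\|\cos\theta\,\|\Der^2\Psi(x_1)\|$.

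The main obstacle is the bookkeeping in the second differentiation of $\Psi$: one must correctly identify the three third-order derivative contributions, track how $\Der\Psi$ appears (linearly in the mixed $\nabla^3_{x_1 u^2}G$ term and quadratically in the $\nabla^3_{u^3}G$ term), and verify that the resulting bound collapses cleanly into $\tilde L/\sigma^2$. Everything else is routine Cauchy--Schwarz and sub-multiplicativity; the refined estimate is the only place where a small geometric argument about the singular subspace of a tensor-valued map is needed, and this is where care is required to ensure the cosine factor genuinely captures the extremal behaviour rather than being a non-tight bound.
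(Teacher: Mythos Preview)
Your proposal is correct and follows essentially the same approach as the paper: both derive $\|\Der\Psi\|\le L_{11}/\sigma$ from the first-order implicit function theorem, then differentiate the optimality condition a second time to obtain the three third-order contributions (bounded via sub-multiplicativity to give $\tilde L/\sigma^2$), and finish with Cauchy--Schwarz for $\|\C\|$ and the projection-onto-$\Sigma$ argument for the refined cosine bound. The paper's proof is organised in the same four steps, invoking the supporting lemmas in Appendix~\ref{appdx:supprt-lemmas} for the explicit second-derivative formula you sketch schematically.
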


\begin{proof}
We assume the mapping arises from an unconstrained problem for simplicity; analogous results hold in the constrained setting.

\medskip
\textbf{1. First derivative via IFT.}  
By the SSOSC assumption,
\[
  H = \nabla^2_{u}G(x_1,\Psi(x_1)) \succeq \sigma I
  \;\Rightarrow\;
  \|H^{-1}\| \le \frac{1}{\sigma}\,.
\]
Applying the first-order implicit function theorem (Lemma~\ref{lem:first-order-ift}) yields
\[
  \Der\Psi(x_1)
  = -H^{-1}\nabla^2_{x_1u}G,
  \quad
  \|\Der\Psi\| \le \frac{L_{11}}{\sigma}\,.
\]

\medskip
\textbf{2. Second derivative via higher-order IFT.}  
Let
\[
\tA = \nabla^3_{x_1^2u}G,\quad
\tB = \nabla^3_{x_1u^2}G,\quad
\tC = \nabla^3_{u^3}G\,.
\]
Then the second-order expansion (Lemma~\ref{lem:second-order-ift}) gives
\[
  \Der^2\Psi
  = -H^{-1} \Bigl[
      \tA
    + \mathscr S\bigl[\tB \!\bullet\! (I \otimes \Der\Psi)\bigr]
    + \tC \!\bullet\! (\Der\Psi \otimes \Der\Psi)
  \Bigr]\,.
\]
Using the bounds
\[
\|H^{-1}\| \le \tfrac{1}{\sigma},\quad
\|\Der\Psi\| \le \tfrac{L_{11}}{\sigma},\quad
\|\tA\| \le L_{12},\quad
\|\tB\| \le L_{21},\quad
\|\tC\| \le L_3\,,
\]
we obtain
\[
  \|\Der^2\Psi\|
  \le \frac{1}{\sigma}\left(
    L_{12}
    + L_{21} \frac{L_{11}}{\sigma}
    + L_3 \frac{L_{11}^2}{\sigma^2}
  \right)
  = \frac{\tilde L}{\sigma^2}\,.
\]

\medskip
\textbf{3. Partial-gradient bound.}  
Since $\|v\| \le \|\nabla f\| \le L_f$, we have
\[
  \|v\| = \xi\,\|\nabla f\| \le \xi\,L_f\,.
\]
Moreover, $\xi < 1$ strictly whenever $\nabla_{x_1}f(x_1,\Psi(x_1)) \ne 0$, since then
\[
\|\nabla f\| > \|\nabla_{x_2}f\| = \|v\|\,.
\]
The degenerate case $\xi = 1$ occurs only if $\nabla_{x_1}f \equiv 0$, i.e., the full gradient lacks an $x_1$-component.

\medskip
\textbf{4. Operator norm of the correction form.}  
By Cauchy–Schwarz,
\[
  \|\C\|
  = \sup_{\|h\| = \|k\| = 1} \bigl|\langle v,\, \Der^2\Psi[h,k] \rangle\bigr|
  \le \|v\| \cdot \|\Der^2\Psi\|
  \le \frac{\tilde L}{\sigma^2} \, \xi\,L_f\,.
\]

Let $\Sigma$ denote the top singular subspace of $\Der^2\Psi$. Projecting $v$ onto $\Sigma$, we obtain the refined bound
\[
  \|\C\| \le \|\Der^2\Psi\| \cdot \|v\| \cdot \cos\theta\,,
\]
where $\cos\theta = \|\mathrm{P}_\Sigma v\| / \|v\|$.

Equality $\cos\theta = 1$ holds only when $v \in \Sigma$, i.e., when $v$ is fully aligned with the top-amplification directions. Otherwise, $\cos\theta < 1$ strictly whenever $v(x_1) \notin \Sigma$.
\end{proof}
\subsection{Preservation of Morse--Bott Structure under Reduction Mappings}
\label{appdx:subsec:preservation-MB}

\begin{assumption}
\label{assum:clean}
    The intersection $\gM_F\cap\gS$ is clean\footnote{We assume that the intersection $\gM_F\cap\gS$ is clean so that $\gS_F$ is a well-defined smooth submanifold. This assumption is essential for establishing the Morse--Bott property for the reduced function $F$. A discussion on the definition of clean intersections and their genericity is provided in Appendix~\ref{appdx:transversal-clean}.}, ensuring $\gS_F$ is a $C^1$ submanifold.
\end{assumption}

\begin{remark}[Correspondence of critical sets under reduction mappings]
\label{rem:diffeo-correspondence}
Let $\Phi(x_{1}) \coloneqq (x_{1},\Psi(x_{1}))$ and $F(x_{1}) \coloneqq f(\Phi(x_{1}))$. Then $\Phi$ is a global $C^2$-diffeomorphism onto its graph manifold $\gM_{F}$. If the critical set $\gS$ intersects $\gM_{F}$ cleanly, their intersection corresponds diffeomorphically to the reduced critical set $\gS_{F} = \{x_{1} : F(x_{1}) = c,\, \nabla F(x_{1}) = 0\}$, with
\[
\Phi: \gS_{F} \xrightarrow{\;\cong\;} \gS \cap \gM_{F}\,.
\]
Moreover, $\Der\Phi(x_1)$ is a linear isomorphism between their tangent spaces.
\end{remark}

\begin{theorem}[Morse--Bott Reduction]
Let $f:\R^n\to\R$ be a $C^2$ function which, on a compact neighbourhood $\gN$ of a level $c$, satisfies the Morse--Bott property: its critical set $\gS = \{x\in\gN: \nabla f(x) = 0,\,f(x)=c\}$, is a smooth submanifold and
\[
\ker\left(\nabla^2f(x)\right) = \tangent{x}{\gS}\qquad\forall x\in\gS\,,
\]
while every positive eigenvalue of $\nabla^2f(x)$ satisfies $\lambda_i \ge\mu_f>0$ uniformly. 

Let $\Phi:\R^{n_1}\to\R^{n_1+n_2}$ be a $C^2$ embedding and denote its image $\gM_F = \Phi(\R^{n_1})$. Write $\Phi(x_1) = \bigl(x_1, \Psi(x_1)\bigr)$, so $\Psi:\R^{n_1}\to\R^{n_2}$ is the induced projection. Define the reduced function
\[
F: \R^{n_1}\to\R,\qquad F(x_1) = f\bigl(\Phi(x_1)\bigr)\,,
\]
and set
\[
\gN_F = \Phi^{-1}(\gN),\qquad \gS_F = \{x_1\in\gN_F:\nabla F(x_1)=0,\, F(x_1)= c\}\,.
\]
Also suppose that for the intersection $\gS\cap\gM_F$ Assumptions~\ref{assum:reduced-problem} and~\ref{assum:clean} hold.
The $F$ is Morse--Bott on $\gN_F$:
\begin{enumerate}
    \item $\gS_F$ is a smooth submanifold of $\R^{n_1}$.
    \item At each $x_1\in\gS_F$, $\ker\left(\nabla^2 F(x_1)\right) = \tangent{x_1}{\gS_F}$, and every positive eigenvalue of $\nabla^2F(x_1)$ satisfies $\lambda_i \ge \mu_F > 0$ uniformly.
\end{enumerate}
\end{theorem}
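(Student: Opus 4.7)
The plan is to verify the three defining ingredients of the Morse--Bott property for $F$ on $\gN_F$: (a) $\gS_F$ is a $C^1$ submanifold of $\R^{n_1}$; (b) $\ker(\nabla^2 F(x_1)) = \tangent{x_1}{\gS_F}$ at every $x_1 \in \gS_F$; and (c) a uniform positive lower bound on the nonzero eigenvalues of $\nabla^2 F(x_1)$ along $\gS_F$. Item (a) is essentially for free from the standing hypotheses: Assumption~\ref{assum:clean} makes $\gS \cap \gM_F$ a $C^1$ submanifold, and Remark~\ref{rem:diffeo-correspondence} shows that $\Phi$ restricts to a $C^2$ diffeomorphism $\gS_F \xrightarrow{\cong} \gS \cap \gM_F$, transporting the submanifold structure back to $\R^{n_1}$.

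For (b), I would exploit the fact that at any $x_1 \in \gS_F$ the ambient gradient $\nabla f(\Phi(x_1))$ vanishes, so in particular $\nabla_{x_2} f(\Phi(x_1)) = 0$ and the correction term $\C(x_1) = \Der^2\Psi(x_1) \bullet \nabla_{x_2} f(\Phi(x_1))$ from Lemma~\ref{lem:restricted_hessian} disappears. This leaves the clean identity
\[
\nabla^2 F(x_1) = \Der\Phi(x_1)^\top \nabla^2 f(\Phi(x_1))\, \Der\Phi(x_1).
\]
Since $\Phi(x_1)$ is a local minimum of $f$, the ambient Hessian is PSD, hence so is $\nabla^2 F(x_1)$, and $v \in \ker(\nabla^2 F(x_1))$ is equivalent to $\Der\Phi(x_1)v \in \ker(\nabla^2 f(\Phi(x_1))) = \tangent{\Phi(x_1)}{\gS}$ (using Morse--Bott of $f$). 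Because $\Der\Phi(x_1) v$ automatically lies in $\tangent{\Phi(x_1)}{\gM_F}$, the cleanness assumption forces $\Der\Phi(x_1) v \in \tangent{\Phi(x_1)}{\gS \cap \gM_F}$, and inverting the differential of the restricted diffeomorphism in Remark~\ref{rem:diffeo-correspondence} identifies this as $v \in \tangent{x_1}{\gS_F}$.

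The main obstacle is (c). For a unit vector $v \perp \tangent{x_1}{\gS_F}$, I would decompose $\Der\Phi(x_1) v = u_T + u_N$ orthogonally with $u_T \in \tangent{\Phi(x_1)}{\gS}$; the $\mu_f$-Morse--Bott property then yields
\[
v^\top \nabla^2 F(x_1)\, v = u_N^\top \nabla^2 f(\Phi(x_1))\, u_N \ge \mu_f \|u_N\|^2,
\]
so the crux reduces to producing a uniform constant $c > 0$ with $\|u_N\| \ge c\,\|v\|$. To this end, I would introduce the linear map $T_{x_1} : v \mapsto \mathrm{P}_{\normal{\Phi(x_1)}{\gS}} \Der\Phi(x_1) v$; by the kernel argument of (b), $\ker T_{x_1} = \tangent{x_1}{\gS_F}$, so on $(\tangent{x_1}{\gS_F})^\perp$ the map is injective with a strictly positive smallest singular value $\sigma_{\min}(x_1)$. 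Continuity of $x_1 \mapsto T_{x_1}$, guaranteed by $\gS$ being a $C^1$ submanifold of constant dimension (so its normal projection varies continuously) and smoothness of $\Phi$, combined with the compactness of $\gS_F \cap \overline{\gN_F}$ inherited from that of $\gN$, gives $c \coloneqq \inf_{x_1} \sigma_{\min}(x_1) > 0$, and hence $\mu_F \ge \mu_f\, c^2 > 0$.

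The delicate technical points will be establishing continuity of the projection $\mathrm{P}_{\normal{\bar{x}}{\gS}}$ as $\bar{x}$ varies along $\gS$---which hinges on $\gS$ having constant dimension across $\gN$, as guaranteed by the ambient Morse--Bott structure---and ensuring the infimum defining $c$ is taken over a genuinely compact subset of $\gS_F$, which follows from properness of the embedding $\Phi$ restricted to $\gN_F$. Note that this strategy yields only the qualitative statement $\mu_F > 0$; recovering the strict quantitative improvement $\mu_F > \mu_f$ requires the extra spectral-gap and non-tangency hypotheses invoked in Theorem~\ref{thm:mb-constant-strict-improvement}.
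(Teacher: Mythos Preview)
Your proposal is correct and follows the same three-part architecture as the paper's proof: smoothness of $\gS_F$ via the clean-intersection assumption and the diffeomorphism $\Phi\colon\gS_F\xrightarrow{\cong}\gS\cap\gM_F$; the kernel identity $\ker\nabla^2F(x_1)=\tangent{x_1}{\gS_F}$ by pushing through $\Der\Phi$ and invoking Morse--Bott for $f$ together with cleanness; and uniform positivity of the nonzero eigenvalues via compactness. Your treatment is in fact tighter than the paper's in two places. First, you correctly use positive semidefiniteness of $\nabla^2 f(\Phi(x_1))$ to pass from $\Der\Phi^\top\nabla^2 f\,\Der\Phi\,v=0$ to $\Der\Phi\,v\in\ker\nabla^2 f$; the paper invokes ``injectivity of $\Der\Phi$'' for this step, which on its own only yields $\nabla^2 f\,\Der\Phi\,v\perp\operatorname{im}\Der\Phi$ and needs the PSD observation (or equivalently the quadratic-form argument you give) to conclude. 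Second, for the uniform lower bound the paper simply states that on a compact $\gS_F$ the smallest positive eigenvalue of $\nabla^2 F$ attains a positive minimum, whereas you make the compactness argument explicit through the singular-value map $T_{x_1}$ and its continuity, which is a cleaner way to see why the infimum stays strictly positive. Your explicit remark that the correction term $\C(x_1)$ vanishes on $\gS_F$ is also something the paper leaves implicit.
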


\begin{proof}
Since, by assumption, the submanifolds $\gS$ and $\gM_F$ intersect cleanly, their intersection $\gS \cap \gM_F$ is a smooth submanifold of $\R^n$. Moreover, since $\Phi$ is a $C^2$ embedding, the pre-image $\gS_F = \Phi^{-1}(\gS \cap \gM_F)$ is a smooth submanifold of $\R^{n_1}$, diffeomorphic to $\gS \cap \gM_F$ (see Remark~\ref{rem:diffeo-correspondence}).

At a critical point $x_1^*\in\gS_F$, we now establish the identity
\[
\ker\bigl(\nabla^2 F(x_1^*)\bigr) = \tangent{x_1^*}{\gS_F}\,.
\]
The proof consists of two parts.

\medskip

\textbf{(1) $\tangent{x_1^*}{\gS_F} \subset \ker\bigl(\nabla^2 F(x_1^*)\bigr)$:} 

A tangent $w \in \tangent{x_1^*}{\gS_F}$ must satisfy $\nabla F=0,\, F=c$ to first order. At a critical point $\nabla F(x_1^*) = 0$, the only nontrivial condition is 
\[
\nabla^2 F(x_1^*)\,w = 0\,,
\]
so that $w\in \ker\bigl(\nabla^2 F(x_1^*)\bigr)$.

\medskip

\textbf{(2) $\ker\bigl(\nabla^2 F(x_1^*)\bigr) \subset \tangent{x_1^*}{\gS_F}$:}

If $\nabla^2 F(x_1^*)\,w=0$, then
\[
0 = \Der\Phi(x_1^*)^\top\, \left[\nabla^2 f\bigl(\Phi(x_1^*)\bigr)\, \Der\Phi(x_1^*)\,w\right]\,.
\]
Injectivity of $\Der\Phi$ gives $\nabla^2 f\bigl(\Phi(x_1^*)\bigr)\left[ \Der\Phi(x_1^*)\,w\right]=0$, so
\[
\Der\Phi(x_1^*)\,w \in \ker\Bigl(\nabla^2 f\bigl(\Phi(x_1^*)\bigr)\Bigr)\,.
\]
Since $f$ satisfies the Morse--Bott property, we have
\[
\ker\Bigl(\nabla^2 f\bigl(\Phi(x_1^*)\bigr)\Bigr) = \tangent{\Phi(x_1^*)}{\gS}\,.
\]
By definition, we have $\Der\Phi(x_1^*)\,w \in \tangent{\Phi(x_1^*)}{\gM_F}$, hence
\[
\Der\Phi(x_1^*)\,w \in \tangent{\Phi(x_1^*)}{\gS}\cap\tangent{\Phi(x_1^*)}{\gM_F} = \tangent{\Phi(x_1^*)}{(\gS\cap\gM_F)}\,.
\]
By the pre-image description of $\gS_F$, we conclude that $w$ must lie in $\tangent{x_1^*}{\gS_F}$.

\medskip

Combining the two inclusions yields
\[
\ker\bigl(\nabla^2 F(x_1^*)\bigr) = \tangent{x_1^*}{\gS_F}\,,
\]
which shows that the reduced function $F(x_1)$ satisfies the Morse--Bott property. On a compact $\gS_F$, the smallest positive eigenvalue of $\nabla^2F$ attains a minimum of $\mu_F >0$.
\end{proof}

\section{Supporting Lemmas}
\label{appdx:supprt-lemmas}
This appendix collects technical results used throughout the main body of the paper. The lemmas are grouped into four self-contained subsections, each addressing a specific analytical or geometric aspect of the theory:

\begin{itemize}
    \item \textbf{Derivatives:} Closed-form expressions for the first- and second-order derivatives of implicitly defined mappings, as well as the Hessian of the reduced function induced by a smooth reparametrisation.
    
    \item \textbf{Rayleigh Quotients Comparison:} Classical inequalities comparing spectral quantities of symmetric matrices under embeddings into lower-dimensional subspaces, used to control the curvature of reduced objectives.
    
    \item \textbf{Uniform Angle Bounds and Genericity:} Results showing that the eigenspaces associated with extreme curvature directions intersect the tangent space of the feasible manifold only trivially. This condition holds generically and yields uniform lower bounds on the angle between the subspaces.
    
    \item \textbf{Genericity of Single Eigenvalues:} A measure-theoretic argument demonstrating that symmetric matrices with repeated eigenvalues form a zero-measure subset. This justifies the generic spectral gap assumptions used in several proofs.
\end{itemize}

\subsection{Derivatives}
\begin{lemma}[First-Order Derivative Expression for Implicit Functions]
\label{lem:first-order-ift}
Let $h:\R^m\times \R^n\to \R$ be at least twice continuously differentiable. Suppose that for each $x$ there exists a unique $y^*(x)$ satisfying the optimality condition
\[
\Der_y h(x, y^*(x)) = 0_{1\times n}\,,
\]
and assume further that the Hessian $\Der^2_y h(x, y^*(x))$ is invertible for all $x$ in a neighbourhood $\gU$ around a local minimiser. Then the first-order derivative of $y^*(x)$ with respect to $x$ is given by
\[
\Der y^*(x) = - \left[\Der^2_y h(x, y^*(x))\right]^{-1}\Der_{xy}^2 h(x,y^*(x))\,.
\]
\end{lemma}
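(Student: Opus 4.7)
The plan is to recognise this as a direct application of the classical Implicit Function Theorem (IFT) to the first-order optimality condition, followed by differentiation of the resulting identity via the chain rule.

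First, I would recast the optimality condition as a zero-set equation. Define the residual map $G:\R^m\times\R^n\to\R^n$ by $G(x,y)\coloneqq\Der_y h(x,y)^\top$. Since $h$ is $C^2$, the map $G$ is $C^1$. The hypothesis that $y^*(x)$ is an optimiser means precisely that $G(x,y^*(x))=0$ for every $x\in\gU$. Moreover, the partial Jacobian $\Der_y G(x,y)$ coincides (up to the transpose convention) with the Hessian block $\Der^2_y h(x,y)$, which is invertible on $\gU$ by assumption.

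Next, I would apply the classical IFT at an arbitrary base point $(x_0,y^*(x_0))\in\gU$. Invertibility of $\Der_y G$ yields a local $C^1$ solution map $\tilde y(x)$ satisfying $G(x,\tilde y(x))=0$ in a neighbourhood of $x_0$. The uniqueness hypothesis on $y^*(x)$ then forces $\tilde y(x)=y^*(x)$ locally, which in particular upgrades $y^*$ to a $C^1$ function on $\gU$. This is the only nontrivial step, and the main work is simply verifying that the hypotheses of the IFT (openness, $C^1$-regularity, invertibility of the appropriate partial) are met.

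Finally, I would differentiate the identity $G(x,y^*(x))\equiv 0$ with respect to $x$ using the chain rule:
\[
\Der_x G(x,y^*(x))+\Der_y G(x,y^*(x))\,\Der y^*(x)=0\,.
\]
Substituting $\Der_x G=\Der^2_{xy} h$ and $\Der_y G=\Der^2_y h$, and inverting the latter (valid by assumption), produces
\[
\Der y^*(x)=-\bigl[\Der^2_y h(x,y^*(x))\bigr]^{-1}\Der^2_{xy} h(x,y^*(x))\,.
\]
The main obstacle, if any, is bookkeeping: keeping transpose conventions consistent between row-gradients and column-gradients so that the dimensions of the matrices $\Der^2_y h\in\R^{n\times n}$ and $\Der^2_{xy} h\in\R^{n\times m}$ line up with $\Der y^*(x)\in\R^{n\times m}$. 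Apart from this, the argument is entirely standard and requires no additional machinery beyond the IFT and the chain rule.
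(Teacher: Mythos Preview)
Your proposal is correct and follows essentially the same approach as the paper: differentiate the optimality condition $\Der_y h(x,y^*(x))=0$ with respect to $x$, expand via the chain rule, and invert $\Der^2_y h$. The only difference is presentational: you are more explicit in setting up the residual map $G$ and invoking the IFT to first establish $C^1$-regularity of $y^*$ before differentiating, whereas the paper differentiates directly and cites the classical implicit function theorem only at the end to justify uniqueness.
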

\begin{proof}
Starting from the optimality condition and differentiating it with respect to $x$ yields
\begin{align*}
    \Der_x (\Der_y h(x, y^*(x)))^\top &= 0\\
    \Der^2_{xy}h(x,y^*(x)) + \Der^2_{y}h(x,y^*(x))\,\Der y^*(x)& = 0\\
    \Der y^*(x) &= -\Bigl[\Der^2_{yy}h(x,y^*(x))\Bigr]^{-1}\Der^2_{xy}h(x,y^*(x))\,.
\end{align*}
The uniqueness of $y^*(x)$ is guaranteed by the classical implicit function theorem.
\end{proof}

\begin{lemma}[Second-Order Derivative Expression for Implicit Functions]
\label{lem:second-order-ift}
Let $h:\R^m \times \R^n \to \R$ be $C^3$, and suppose that $y^*(x)$ is the unique solution of 
\[
\Der_y h(x, y^*(x)) = 0,\quad x\in\gU\,,
\]
with 
\[
H(x) \coloneqq \Der^2_y h(x, y^*(x)) \in \mathrm{GL}_n(\R)\,,
\]
for all $x\in\gU$. Define
\[
J(x) \coloneqq \Der y^*(x) = - H(x)^{-1}\Der_{xy}^2 h \left(x, y^*(x)\right) \in \R^{n\times m}\,.
\]
Further introduce the following third-order multilinear maps at $(x, y^*(x))$:
\begin{align*}
    \tA(x) &\coloneqq \Der^3_{x^2y}h\in\R^n\otimes \R^m \otimes \R^m,\\
    \tB(x) &\coloneqq \Der^3_{xy^2}h\in\R^n\otimes \R^m \otimes \R^n,\\
    \tC(x) &\coloneqq \Der^3_yh \in \R^n\otimes \R^n \otimes \R^n\,.
\end{align*}
Let $\mathscr{S}: L(\R^m\otimes \R^m, \R^n)\to L(\mathrm{Sym}^2\R^m, \R^n)$ be the symmetrisation operator $\mathscr{S}[\tT](u,v) = \frac{1}{2}(\tT[u,v] + \tT[v,u])$. Then the second derivative $\Der^2y^*(x)\in L(\mathrm{Sym}^2\R^m, \R^n)$ is given by the formula
\[
\Der^2y^*(x) = - H(x)^{-1} \bullet \left[\tA(x) + \mathscr{S}[\tB(x) \bullet (I\otimes J(x))] + \tC(x)\bullet (J(x) \otimes J(x))\right]\,,
\]
where $\otimes$ is the tensor product and $\bullet$ is a tensor contraction on matching covariant and contravariant indices.
\end{lemma}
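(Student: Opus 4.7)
The plan is to obtain $\Der^2 y^*(x)$ by differentiating the first-order optimality condition $\nabla_y h(x, y^*(x)) \equiv 0$ a second time in $x$ and solving for the only unknown that appears, namely $\Der^2 y^*(x)$ itself. Concretely, set $F(x) \coloneqq \nabla_y h(x, y^*(x)) \in \R^n$. Since $F \equiv 0$, every one of its derivatives vanishes, so in particular $\Der^2 F(x)[u,v] = 0$ for all $u,v \in \R^m$. Computing $\Der F(x)[u]$ by the chain rule recovers the identity
\[
  h_{xy}\cdot u + h_{yy}\,J(x)u \;=\; 0,
\]
which is exactly the content of Lemma~\ref{lem:first-order-ift}. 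I then differentiate this once more in the direction $v$, again via the chain rule, and solve the resulting linear equation for $\Der^2 y^*(x)[u,v]$ using the invertibility of $H(x)=h_{yy}$.

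The second differentiation produces four groups of terms, which I organise according to how many $y$-indices the underlying third derivative carries. Differentiating $h_{xy}\cdot u$ contributes a pure $x$-derivative piece $\tA[u,v]$ together with an indirect $y$-derivative piece $\tB[u, J(x)v]$ (the second $y$-slot being filled through the chain rule via $y^*$). Differentiating $h_{yy}\,J(x)u$ contributes, similarly, a direct piece $\tB[v, J(x)u]$, a fully indirect piece $\tC[J(x)u, J(x)v]$, and the term $H(x)\,\Der^2 y^*(x)[u,v]$ coming from differentiating $J(x)u$. Summing these to zero gives
\[
  0 \;=\; \tA[u,v] \;+\; \tB[u,J v] \;+\; \tB[v,J u] \;+\; \tC[J u, J v] \;+\; H(x)\,\Der^2 y^*(x)[u,v].
\]
Isolating the last term and inverting $H(x)$ gives the claimed formula, where the two mixed $\tB$ contributions are precisely what $\mathscr{S}\bigl[\tB \bullet (I \otimes J)\bigr]$ encodes (up to the symmetrisation convention), and $\tC \bullet (J \otimes J)$ encodes the double indirect contribution.

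The main obstacle is not any single hard estimate but careful multilinear bookkeeping: the total third derivative $\nabla^3 h$ must be consistently decomposed into the blocks $\tA$, $\tB$, $\tC$ according to how many slots are $x$-slots versus $y$-slots, and each application of the chain rule must correctly identify which slot is being filled by the input direction and which by $J$ applied to it. Once the convention is fixed, the symmetry of $\Der^2 y^*(x) \in L(\mathrm{Sym}^2\R^m,\R^n)$ forces the $\tB$-terms to appear through the symmetrisation operator $\mathscr{S}$, matching the statement. The smoothness assumption $h \in C^3$ guarantees that the third-order tensors are continuous and symmetric in their respective slot groups, justifying every interchange of partial derivatives used above, and the standing hypothesis $H(x) \in \mathrm{GL}_n(\R)$ ensures that the final inversion is well defined throughout $\gU$.
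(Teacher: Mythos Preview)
Your proposal is correct and follows essentially the same route as the paper: differentiate the first-order optimality condition $\nabla_y h(x,y^*(x))\equiv 0$ twice via the chain rule, group the resulting third-order terms into the $\tA$, $\tB$, $\tC$ blocks, and invert $H(x)$ to isolate $\Der^2 y^*(x)[u,v]$. The organisation of the two mixed $\tB$ contributions into the symmetrised form and the appeal to $C^3$ smoothness for commuting partials match the paper's argument line for line.
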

\begin{proof}
For convenience, we define the vector-valued function $F:\R^m \times \R^n \to \R^n$ where 
\[
F(x,y) = \Der_y h(x,y),\quad \Der_x F = \Der_{xy}^2h,\quad \Der_yF = \Der_{y^2}^2h = H(x)\,.
\]
From Lemma~\ref{lem:first-order-ift}, we have that for the direction $u\in\R^m$
\[
\Der F[u] = \Der_x F[u] + \Der_yF\big[\Der y^*[u]\big] = 0\,.
\]
Differentiating again in direction $v\in\R^m$, we obtain
\[
\Der\left(\Der_x F[u]\right)[v] + \Der\left(\Der_yF\big[\Der y^*[u]\big]\right)[v] = 0\,.
\]
We now expand each of these two terms by applying the produce - and chain - rules in turn.
\[
\Der\left(\Der_x F[u]\right)[v] = \underbrace{\Der_{xx}^2F[u,v]}_{\tA(x)[u,v]} + \underbrace{\Der_{xy}^2 F[u,\Der y^*[v]]}_{\tB(x)[u, \Der y^*[v]]}\,.
\]
\[
\Der\left(\Der_yF\big[\Der y^*[u]\big]\right)[v] = \tB(x)[\Der y^*[u], v] + \underbrace{\Der^2_{yy} F[\Der y^*[u], \Der y^*[v]]}_{\tC(x)[\Der y^*[u], \Der y^*[v]]} + \Der_y F[\Der^2 y^*[u,v]]\,.
\]
Summing the two expansions and recalling that $\Der_y F = \Der^2_y h$ is the Hessian in $y$, we get
\[
\Der^2_y h [\Der^2 y^*[u, v]] = -\big\{\tA(x)[u,v] + \tB(x)[u, \Der y^*[v]] + \tB(x)[\Der y^*[u], v] + \tC(x)[\Der y^*[u], \Der y^*[v]]\big\}\,.
\]
Since $D^2_y h =: H(x)$ is invertible, and denoting $J(x) \coloneqq \Der y^*(x)$, we conclude
\[
\Der^2 y^*(x)[u,v] = -[H(x)]^{-1} \big\{\tA(x)[u,v] + \tB(x)[u, J(v)] + \tB(x)[J(u), v] + \tC(x)[J(u), J(v)]\big\}\,.
\]
Because the function $h$ is smooth, the mixed partials should commute by the Schwarz's theorem. The tensors $\tA$ and $\tC$ are already symmetric, so we only need to deal with the two $\tB$ terms. To that end, we introduce the symmetrisation operator $\mathscr{S}[\tT](u,v) = \frac{1}{2} (\tT[u,v], \tT[v,u])$, and we conclude
\[
\Der^2 y^*(x)[u,v] = -[H(x)]^{-1} \big\{\tA(x)[u,v] + \mathscr{S}[\tB(\cdot, J(\cdot))](u,v) + \tC(x)[J(u), J(v)]\big\}\,.
\]
Re-expressing this result in the compact tensor-product/contraction notation exactly yields the lemma's statement.
\end{proof}
\begin{lemma}[Restricted Hessian of the Reduced Function]\label{lem:restricted_hessian}
Let $f\colon \R^{n_1+n_2} \to \R$ be twice continuously differentiable and let $\Psi(x_1)\colon \R^{n_1} \to \R^{n_2}$
be a twice continuously differentiable mapping. Define the reduced function $F\colon \R^{n_1}\to\R$ by $F(x_1) = f\Bigl(x_1, \Psi(x_1)\Bigr)$, and denote the reduction map
\[
\Phi(x_1) = \begin{pmatrix} x_1 \\ \Psi(x_1) \end{pmatrix},\quad \text{with}\quad J(x_1)=\Der_{x_1}\Psi(x_1)\,.
\]
Then the Hessian of $ F $ is given by the compact representation
\[
\nabla^2 F(x_1) = \begin{bmatrix} I & J(x_1)^\top \end{bmatrix}\,\nabla^2 f(x^*)\,\begin{bmatrix} I \\ J(x_1) \end{bmatrix} + \C(x_1)\,,
\]
where $x^*=\Phi(x_1)$ and the correction term
\[
\C(x_1) = \Der_{x_1}^2 \Psi(x_1) \bullet \nabla_{x_2} f(x^*)\,,
\]
captures the curvature of the constraint manifold $\gM_F=\{(x_1,\Psi(x_1))\colon x_1\in\R^{n_1}\}$. The symbol $\bullet$ denotes a tensor contraction operation at the appropriate indices.
\end{lemma}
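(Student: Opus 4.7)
The plan is to differentiate $F = f \circ \Phi$ twice by composition, handling the first stage via the chain rule and the second via the product rule. The key structural observation is that $\Phi$ has the block form $(x_1, \Psi(x_1))$, so its Jacobian $\Der\Phi(x_1)$ has top block equal to the identity and bottom block equal to $J(x_1)$, while its second derivative is supported entirely in the $x_2$-block; this is precisely what will produce the correction tensor, while the $x_1$-block (being the identity map) contributes nothing.

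First, I would apply the chain rule to obtain the gradient
\[
\nabla F(x_1) = \Der\Phi(x_1)^\top\, \nabla f(\Phi(x_1)) = \nabla_{x_1} f(x^*) + J(x_1)^\top\, \nabla_{x_2} f(x^*),
\]
where $x^* = \Phi(x_1)$. This is a smooth vector-valued function of $x_1$, and I would then differentiate it once more using the product rule.

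Next, the product rule splits the Hessian into two pieces. Differentiating the outer factor $\nabla f \circ \Phi$ yields $\nabla^2 f(x^*)\, \Der\Phi(x_1)$ by the chain rule, and left-multiplication by $\Der\Phi(x_1)^\top$ assembles the symmetric bilinear form $\Der\Phi(x_1)^\top\, \nabla^2 f(x^*)\, \Der\Phi(x_1)$, which is precisely the displayed block expression in the lemma. Differentiating the inner factor $\Der\Phi(x_1)^\top$ produces $\Der J(x_1)^\top = \Der^2\Psi(x_1)^\top$ (since the identity block contributes zero); this third-order object then contracts against the $x_2$-component of $\nabla f(x^*)$, delivering exactly $\C(x_1) = \Der^2\Psi(x_1) \bullet \nabla_{x_2} f(x^*)$.

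Finally, I would check symmetry of both contributions to confirm consistency: the first piece is symmetric because $\nabla^2 f$ is, and the second because Schwarz's theorem on the $C^2$ mapping $\Psi$ guarantees $\partial^2_{ij}\Psi_k = \partial^2_{ji}\Psi_k$. I expect no conceptual difficulty; the only genuine obstacle is notational, namely fixing a precise tensor-contraction convention so that $\bullet$ is unambiguous. I would address this by writing, at least once, the coordinate form $[\C(x_1)]_{ij} = \sum_{k=1}^{n_2} \partial^2_{x_{1,i} x_{1,j}} \Psi_k(x_1)\, \partial_{x_{2,k}} f(x^*)$, from which the identity in the lemma is immediate.
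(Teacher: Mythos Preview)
Your proposal is correct and follows essentially the same approach as the paper's proof: both differentiate $F = f\circ\Phi$ once via the chain rule to obtain $\nabla F(x_1) = \nabla_{x_1} f(x^*) + J(x_1)^\top \nabla_{x_2} f(x^*)$, then differentiate again using the product rule, and finally regroup the resulting four cross-terms into the block form $\Der\Phi^\top \nabla^2 f\, \Der\Phi$ plus the correction $\Der^2\Psi \bullet \nabla_{x_2} f$. The paper's version expands the block structure a bit more explicitly term-by-term, whereas you keep the computation at the level of $\Der\Phi$ and add a symmetry check and a coordinate expression for $\C(x_1)$; these are cosmetic differences only.
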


\begin{proof}
We start by differentiating the reduced function $F(x_1) = f(x_1, \Psi(x_1))$. By the chain rule,
\[
\nabla F(x_1) = \nabla_{x_1} f(x^*) + \left(\Der_{x_1}\Psi(x_1)\right)^\top \nabla_{x_2} f(x^*)\,,
\]
where $x^*=(x_1,\Psi(x_1))$. Differentiating $\nabla F(x_1)$ with respect to $x_1$ gives
\begin{align*}
\nabla^2 F(x_1) &= \Der_{x_1}\Bigl(\nabla_{x_1} f(x^*) + \left(\Der_{x_1}\Psi(x_1)\right)^\top \nabla_{x_2} f(x^*)\Bigr)\\[1mm]
&= \nabla_{x_1}^2 f(x^*) + \nabla_{x_1x_2}^2 f(x^*)\,\Der_{x_1}\Psi(x_1) \\[1mm]
&\quad + \Der_{x_1}\left[\left(\Der_{x_1}\Psi(x_1)\right)^\top\right]\bullet\nabla_{x_2} f(x^*) + \left(\Der_{x_1}\Psi(x_1)\right)^\top \Der_{x_1}\Bigl[\nabla_{x_2} f(x^*)\Bigr].
\end{align*}
Since $\Der_{x_1}\left[\nabla_{x_2} f(x^*)\right] = \nabla_{x_2x_1}^2 f(x^*) + \nabla_{x_2}^2 f(x^*)\,\Der_{x_1}\Psi(x_1)$ and using the notation $ J(x_1)=\Der_{x_1}\Psi(x_1) $, we can rewrite the above as
\begin{align*}
\nabla^2 F(x_1) &= \nabla_{x_1}^2 f(x^*) + \nabla_{x_1x_2}^2 f(x^*)\,J(x_1) \\
&\quad + \Der_{x_1}\left[\left(J(x_1)\right)^\top\right]\bullet\nabla_{x_2} f(x^*) + J(x_1)^\top\Bigl[\nabla_{x_2x_1}^2 f(x^*) + \nabla_{x_2}^2 f(x^*)\,J(x_1)\Bigr]\,.
\end{align*}
Grouping like terms, we obtain
\begin{align*}
\nabla^2 F(x_1) &= \nabla_{x_1}^2 f(x^*) + \nabla_{x_1x_2}^2 f(x^*)\,J(x_1) + J(x_1)^\top \nabla_{x_2x_1}^2 f(x^*) + J(x_1)^\top \nabla_{x_2}^2 f(x^*)\,J(x_1) \\ 
&\quad+\underbrace{\underbrace{\Der_{x_1}^2 \Psi(x_1)}_{n_2\times n_1 \times n_1} \bullet \underbrace{\nabla_{x_2}f(x^*)}_{n_2\times 1}}_{n_1\times n_1}\,.
\end{align*}

This expression can be compactly represented in block form as
\[
\nabla^2 F(x_1) = \begin{bmatrix} I & J(x_1)^\top \end{bmatrix}\,\nabla^2 f(x^*)\,\begin{bmatrix} I \\ J(x_1) \end{bmatrix} + \Der_{x_1}^2 \Psi(x_1) \bullet \nabla_{x_2} f(x^*)\,,
\]
which completes the derivation.
\end{proof}

\subsection{Rayleigh Quotients Comparison}
\begin{lemma}[Rayleigh Quotient on an Embedded Subspace]
\label{lem:rayleigh-quotient-immersed}
Let $A\in\R^{n\times n}$ be a symmetric matrix and
let $B\in\R^{n\times k}$ ($k<n$) have full column–rank.
Write
\[
\gV\coloneqq\mathrm{im}\,B\subset\R^{n},
\qquad
B^{\!\top}B\succ0\,.
\]

Define the ambient extremal Rayleigh quotients
\[
\maxEigen(A)\coloneqq\max_{v\in \R^n,\,v\ne0}\frac{v^\top Av}{v^\top v},
\qquad
\minEigen(A)\coloneqq\min_{v\in\R^n,\,v\ne0}\frac{v^\top Av}{v^\top v},
\]
and the embedded (or generalised) Rayleigh quotients
\[
\maxEigen(A|_{\gV}) \coloneqq\max_{y\in \R^k,\,y\ne0}\frac{y^\top B^\top AB\,y}{y^\top B^\top B\,y},
\qquad
\minEigen(A|_{\gV})\coloneqq\min_{y\in\R^k,\,y\ne0}\frac{y^\top B^\top AB\,y}{y^\top B^\top B\,y}\,.
\]

Then
\[
\minEigen(A)\le
\minEigen(A|_{\gV})\le
\maxEigen(A|_{\gV})\le \maxEigen(A)\,.
\tag{$*$}
\]
Moreover, the right (resp.\ left) inequality in $(*)$ is strict if and only if 
$E_{\max}(A)\cap\gV=\{0\}$
(resp.\ $E_{\min}(A)\cap\gV=\{0\}$).
\end{lemma}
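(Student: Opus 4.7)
}
The plan is to eliminate the matrix $B$ from the generalised Rayleigh quotient via the substitution $v = By$, reducing everything to standard Rayleigh quotients on nested subsets of $\R^n$. Since $B$ has full column rank, the map $y \mapsto By$ is a linear bijection from $\R^k \setminus \{0\}$ onto $\gV \setminus \{0\}$, so
\[
\maxEigen(A|_{\gV}) = \sup_{y \ne 0} \frac{(By)^\top A (By)}{(By)^\top (By)} = \sup_{v \in \gV \setminus \{0\}} \frac{v^\top A v}{v^\top v},
\]
and analogously $\minEigen(A|_{\gV})$ equals the corresponding infimum over $\gV \setminus \{0\}$. Both extrema are attained because the Rayleigh quotient is scale-invariant and continuous on the compact unit sphere of $\gV$.

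Given this representation, the three inequalities are immediate from set inclusion. Since $\gV \setminus \{0\} \subseteq \R^n \setminus \{0\}$, the sup (resp.\ inf) over the smaller set is bounded by the sup (resp.\ inf) over the larger set, yielding $\maxEigen(A|_\gV) \le \maxEigen(A)$ and $\minEigen(A) \le \minEigen(A|_\gV)$. The middle inequality $\minEigen(A|_\gV) \le \maxEigen(A|_\gV)$ is trivial, being the comparison of an infimum and a supremum of the same continuous function over a nonempty domain.

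For the strictness claims, I would invoke the classical fact that, for symmetric $A$, the ambient Rayleigh quotient attains the value $\maxEigen(A)$ at a nonzero $v$ if and only if $v \in E_{\max}(A)$, with the analogous characterisation for $\minEigen(A)$. Combined with the attainment noted above, the equality $\maxEigen(A|_\gV) = \maxEigen(A)$ is then equivalent to the existence of a nonzero $v \in \gV$ satisfying $Av = \maxEigen(A)\,v$, i.e.\ $E_{\max}(A) \cap \gV \ne \{0\}$. Taking the contrapositive gives the stated strict inequality on the right; the argument on the left is identical with $\minEigen$ in place of $\maxEigen$.

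No substantive obstacle is anticipated: the only point that requires care is justifying the passage from \emph{attained supremum} to \emph{maximiser is an eigenvector in $\gV$}, which follows from the first-order optimality condition for the Rayleigh quotient restricted to $\gV$ (equivalently, from applying the spectral decomposition of $A$ and observing that the maximum of $\sum_i \lambda_i \alpha_i^2$ on the simplex $\sum_i \alpha_i^2 = 1$ is attained only when $\alpha_i$ is supported on indices with $\lambda_i = \maxEigen(A)$). Attainment itself follows from compactness of the unit sphere in the finite-dimensional subspace $\gV$, so the whole argument is a direct application of standard variational characterisations of eigenvalues.
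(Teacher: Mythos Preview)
Your proposal is correct and matches the paper's proof essentially line for line: both perform the change of variables $v = By$ to identify the generalised Rayleigh quotient with the ordinary Rayleigh quotient restricted to $\gV\setminus\{0\}$, derive the inequalities from the inclusion $\gV\subset\R^n$ (the paper phrases this as Courant--Fischer, you as a sup/inf over nested sets), and obtain strictness from the characterisation of maximisers of the Rayleigh quotient as eigenvectors. Your added remarks on attainment via compactness and the spectral-decomposition justification for the maximiser characterisation are slightly more explicit than the paper's, but the substance is identical.
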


\begin{proof}
Put $w\coloneqq By\in\gV$. Because $B$ has full column–rank,
the map $y\mapsto w$ is a bijection between
$\R^{k}\setminus\{0\}$ and $\gV\setminus\{0\}$
and
\[
y^\top R\,y
  =(By)^\top(By)=\|w\|^{2},
\quad
y^\top B^\top AB\,y
  =(By)^\top A(By)=w^\top A\,w\,.
\]
Hence
\[
\maxEigen(A|_{\gV})
=\max_{w\in\gV,\,w\neq0}\frac{w^\top A\,w}{w^\top w},
\qquad
\minEigen(A|_{\gV})
=\min_{w\in\gV,\,w\neq0}\frac{w^\top A\,w}{w^\top w}\,.
\]

Applying the Courant–Fischer variational characterisation to the ordinary
Rayleigh quotient on the proper subspace $\gV\subset\R^{n}$ gives the chain of inequalities $(*)$.
If, in addition, $E_{\max}(A)\cap\gV=\{0\}$
(respectively $E_{\min}(A)\cap\gV=\{0\}$),
then the extremal value attained over $\gV$ is strictly smaller
(respectively strictly larger) than the ambient one.
\end{proof}

\begin{corollary}[Euclidean Rayleigh Bounds via Gram-Matrix Extremes]
\label{cor:euclidean-quotient}
Keep the notation and assumptions of
Lemma~\ref{lem:rayleigh-quotient-immersed} and set
\[
m^{(R)}\coloneqq\minEigen(R),\qquad
M^{(R)}\coloneqq\maxEigen(R)\qquad(0< m\le M)\,.
\]

Define the \emph{coordinate} Rayleigh quotients
\[
\widehat\maxEigen(B^\top AB)\coloneqq\max_{y\in\R^{k},\,y\ne0}\frac{y^\top B^\top AB\,y}{y^\top y},
\qquad
\widehat\minEigen(B^\top AB)
 \coloneqq\min_{y\in\R^{k},\,y\ne0}\frac{y^\top B^\top AB\,y}{y^\top y}\,.
\]

Then
\[
m^{(R)}\,\minEigen(A) \le \widehat\minEigen(B^\top AB) \le \widehat\maxEigen(B^\top AB) \le M^{(R)}\,\maxEigen(A)\,.
\tag{$*$}
\]
Moreover, the right (resp.\ left) inequality in $(*)$ is strict
if and only if $E_{\max}(A)\cap\gV=\{0\}$
(resp.\ $E_{\min}(A)\cap\gV=\{0\}$).
\end{corollary}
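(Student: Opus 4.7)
The plan is to reduce the corollary to Lemma~\ref{lem:rayleigh-quotient-immersed} by rewriting each coordinate Rayleigh quotient as a product of a $R$-generalised Rayleigh quotient and a purely metric ratio, then bounding each factor separately. Concretely, for any $y\in\R^{k}\setminus\{0\}$, I would write
\[
\frac{y^{\top}B^{\top}AB\,y}{y^{\top}y}
=\underbrace{\frac{y^{\top}B^{\top}AB\,y}{y^{\top}R\,y}}_{\text{immersed quotient on }\gV}\;\cdot\;\underbrace{\frac{y^{\top}R\,y}{y^{\top}y}}_{\in[m^{(R)},M^{(R)}]}.
\]
Setting $v=By\in\gV$ makes the first factor equal to $v^{\top}Av/\|v\|^{2}$, to which Lemma~\ref{lem:rayleigh-quotient-immersed} applies directly; the second factor is controlled by the Rayleigh quotient of the Gram matrix $R=B^{\top}B$, which by definition lies between $m^{(R)}$ and $M^{(R)}$.

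For the upper bound, I would invoke the lemma to get $(By)^{\top}A(By)\le\maxEigen(A|_{\gV})\,y^{\top}R\,y\le\maxEigen(A)\,y^{\top}R\,y$, and then pick the Gram-matrix multiplier that makes the bound loosest, namely $y^{\top}R\,y\le M^{(R)}\|y\|^{2}$, to obtain $\widehat\maxEigen(B^{\top}AB)\le M^{(R)}\maxEigen(A)$. Symmetrically, for the lower bound I would use $(By)^{\top}A(By)\ge\minEigen(A|_{\gV})\,y^{\top}R\,y\ge\minEigen(A)\,y^{\top}R\,y\ge m^{(R)}\minEigen(A)\,\|y\|^{2}$, giving $m^{(R)}\minEigen(A)\le\widehat\minEigen(B^{\top}AB)$. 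The middle inequality $\widehat\minEigen\le\widehat\maxEigen$ is automatic from the definitions.

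For the strictness claims, I would propagate equality through the chain. Since $m^{(R)},M^{(R)}>0$, the multipliers cannot create equality where there was none: the right inequality in $(*)$ is strict iff $\maxEigen(A|_{\gV})<\maxEigen(A)$, which by Lemma~\ref{lem:rayleigh-quotient-immersed} happens iff $E_{\max}(A)\cap\gV=\{0\}$, and analogously on the left. The main obstacle I anticipate is a minor bookkeeping subtlety with signs: the step $\minEigen(A)\,y^{\top}R\,y\ge m^{(R)}\minEigen(A)\|y\|^{2}$ uses $m^{(R)}\le\tfrac{y^{\top}Ry}{y^{\top}y}$ combined with $\minEigen(A)\ge0$; if $\minEigen(A)<0$ the inequality flips and one must use $M^{(R)}$ instead, and similarly for the upper bound when $\maxEigen(A)<0$. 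I would either state the corollary under the implicit PSD regime in which it is invoked (the Hessian-of-a-smoothness-constant setting of Corollary~\ref{cor:eucl-smoothness}), or else split into cases depending on the signs of $\minEigen(A)$ and $\maxEigen(A)$ and choose the appropriate Gram-matrix extremum in each case. Apart from this sign caveat, the proof is a short application of the lemma plus the spectral bounds on $R$.
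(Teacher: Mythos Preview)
Your proposal is correct and follows essentially the same route as the paper: factor the coordinate quotient as the generalised (pullback) quotient times the Gram-matrix ratio $y^{\top}Ry/y^{\top}y\in[m^{(R)},M^{(R)}]$, then invoke Lemma~\ref{lem:rayleigh-quotient-immersed} on the first factor and bound the second by its extremes; strictness is inherited from the lemma. Your sign caveat is well spotted and is a genuine subtlety that the paper's own proof also glosses over---the stated chain $(*)$ as written is only clean when $\minEigen(A)\ge 0$ and $\maxEigen(A)\ge 0$, which is precisely the regime in which the corollary is actually used (Corollary~\ref{cor:eucl-smoothness}).
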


\begin{proof}
Because $m^{(R)}I\preceq R\preceq M^{(R)}I$, for every $y\neq0$ one has
\[
m^{(R)}\,y^\top y
\le
y^\top R\,y
\le
M^{(R)}\,y^\top y\,.
\]
Combining this with the definition of the generalised Rayleigh quotient gives
\[
\frac{y^\top B^\top AB\,y}{y^\top y}
   \le
   M^{(R)}\,
   \frac{y^\top B^\top AB\,y}{y^\top R\,y},
\qquad
\frac{y^\top B^\top AB\,y}{y^\top y}
   \ge
   m^{(R)}\,
   \frac{y^\top B^\top AB\,y}{y^\top R\,y}\,.
\]
Taking the maximum (respectively minimum) over $y\neq0$ and invoking
Lemma~\ref{lem:rayleigh-quotient-immersed} yields
\[
m^{(R)}\,\minEigen(A|_{\gV})
\le
\widehat\minEigen(B^\top AB)
\le
\widehat\maxEigen(B^\top AB)
\le
M^{(R)}\,\maxEigen(A|_{\gV})\,.
\]
Applying again the bounds
$\minEigen(A)\le\minEigen(A|_{\gV})$
and
$\maxEigen(A|_{\gV})\le\maxEigen(A)$
gives the chain $(*)$.
Strictness of the outer inequalities propagates from the corresponding
strictness in Lemma~\ref{lem:rayleigh-quotient-immersed}.
\end{proof}

\subsection{Uniform Angle Bounds and Genericity}
\label{appdx:subsec:uniform-angle-bounds}
\begin{lemma}[Local Compactness of the Intersection $\gS\cap\gM_F$ in a bounded neighbourhood]
\label{lem:intersection-compact}
Let a function $f$ be $C^2$. We define the manifold of local minimisers as
\[\gS = \{x \in \R^n : \nabla f(x) = 0,\, f(x) = f_\gS\}\,,
\] 
and the graph of a continuous mapping $\Psi$ as 
\[
\gM_F = \{(x_1, \Psi(x_1)) : x_1\in\R^{n_1}\}\,.
\]
Also, let $\gN \subset \R^{n_1 + n_2}$ be an closed and bounded set. Write
\[
\gS^\loc \coloneqq \gS \cap \gN, \quad \gM_F^\loc \coloneqq \gM_F \cap \gN\,.
\]
Then both $\gS^\loc$ and $\gM_F^\loc$ are closed subsets of the compact set $\gN$, and therefore
\[
\gS^\loc \cap \gM_F^\loc\,,
\]
is compact.
\end{lemma}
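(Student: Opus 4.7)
The plan is to prove compactness by showing that $\gS^\loc\cap\gM_F^\loc$ is a closed subset of the compact set $\gN$, and then invoking the classical fact that closed subsets of compact sets are compact. Since the ambient space is finite-dimensional Euclidean, compactness of $\gN$ itself will follow directly from the Heine--Borel theorem applied to the closed and bounded set $\gN\subset\R^{n_1+n_2}$.

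First, I would establish that $\gS$ is a closed subset of $\R^n$. Because $f\in C^2$, both $f$ and $\nabla f$ are continuous, so the set
\[
\gS = \{x\in\R^n : \nabla f(x) = 0\} \cap \{x\in\R^n : f(x) = f_\gS\}\,,
\]
is the intersection of the preimage of the closed singleton $\{0\}$ under $\nabla f$ and the preimage of $\{f_\gS\}$ under $f$, and is therefore closed. Next, I would show that $\gM_F$ is closed in $\R^{n_1+n_2}$: this is the standard fact that the graph of a continuous mapping $\Psi:\R^{n_1}\to\R^{n_2}$ is closed, which one can verify by taking a convergent sequence $(x_1^{(k)},\Psi(x_1^{(k)}))\to(y_1,y_2)$ and using continuity of $\Psi$ to conclude $y_2=\Psi(y_1)$.

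With both $\gS$ and $\gM_F$ closed and $\gN$ compact, the sets $\gS^\loc = \gS\cap\gN$ and $\gM_F^\loc = \gM_F\cap\gN$ are each closed subsets of the compact set $\gN$, hence compact. Their intersection $\gS^\loc\cap\gM_F^\loc$ is again a closed subset of $\gN$ (as a finite intersection of closed sets), and therefore compact, which is exactly the desired conclusion.

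I do not anticipate any substantive obstacle here: the argument is essentially a topological bookkeeping exercise relying only on the continuity of $f$, $\nabla f$, and $\Psi$, together with Heine--Borel. The only point that requires mild care is asserting closedness of the graph $\gM_F$, which uses continuity of $\Psi$ rather than any additional smoothness assumption, and this matches the hypothesis given in the lemma statement.
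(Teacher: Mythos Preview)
Your proposal is correct and follows essentially the same approach as the paper: show $\gS$ and $\gM_F$ are closed (via preimages of closed sets under continuous maps), intersect with the compact $\gN$, and conclude. The only cosmetic difference is that the paper verifies closedness of $\gM_F$ by writing it as $g^{-1}(\{0\})$ for $g(x_1,x_2)=x_2-\Psi(x_1)$, whereas you use the equivalent sequential argument for the graph of a continuous map.
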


\begin{proof}
We can write the solution manifold $\gS$ as an intersection of two pre-images
\[
\gS = f^{-1}(\{f_S\})\cap (\nabla f)^{-1}(\{0\})\,.
\]
Since $f$ is $C^1$, and $\{f_\gS\}\subset \R$, $\{0\}\subset \R$ are closed sets, it follows that their respective pre-images are closed in $\R^n$. Hence, $\gS$ is a closed set.

Similarly, the feasible manifold can be defined as the pre-image of a function $g:\R^{n_1} \times \R^{n_2}\to\R^{n_2}$ such that
\[
\gM_F = \{(x_1,x_2): g(x_1, x_2)\coloneqq x_2 - \Psi(x_1) = 0\} = g^{-1}(\{0\})\,,
\]
is closed in $\R^{n_1+n_2}$.

Finally, since $\gS$ and $\gM_F$ are closed, their intersection with the neighbourhood $\gN$ is a closed subset of a compact set. Any closed subset of a compact set is compact, and thus $\gS^\loc\cap\gM_F^\loc$ is compact.
\end{proof}

\begin{lemma}[Uniform Angle Bound for an Isolated Spectral Subspace]
\label{lem:uniform-angle-bound}
Let $\gN\subset\R^{n}$ be a compact set and let
$H:\gN\to\mathrm{Sym}(n)$ be a continuous map.
Let $\gM_F\subset\R^{n}$ be a smooth submanifold such that
$K:=\gN\cap\gM_F$ is compact.

For each $x\in K$ choose an eigenvalue $\lambda(x)\in\mathrm{spec}(H_x)$ (with multiplicity $k\ge 1$) and assume it is uniformly isolated:
\[
\min_{\mu\in\mathrm{spec}(H_x)\setminus\{\lambda(x)\}}
|\mu-\lambda(x)|\;\ge\;\Delta\;>\;0\,.
\]
Define the (possibly multi‑dimensional) eigenspace
\[
E(x)\;:=\;\ker\bigl(H_x-\lambda(x)I\bigr)\,,
\]
and assume that it satisfies
\[
E(x)\cap T_{x}\gM_F=\{0\},\qquad\forall\,x\in K\,.
\]
Then there exists $\theta>0$ such that for every
$x\in K$ and every unit vector $v\in E(x)$ one has
\[
\angle\bigl(v,\,\tangent{x}{\gM_F}\bigr)\ge\theta\,.
\]
\end{lemma}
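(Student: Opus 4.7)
The natural strategy is proof by contradiction combined with sequential compactness of $K$. Suppose no such uniform $\theta$ exists. Then there is a sequence $(x_m)\subset K$ and unit vectors $v_m\in E(x_m)$ with
\[
\angle\bigl(v_m,\tangent{x_m}{\gM_F}\bigr)\longrightarrow 0,
\]
equivalently $\|\mathrm{P}_{\tangent{x_m}{\gM_F}}v_m\|\to 1$. Since $K$ is compact, extract a subsequence (not relabelled) so that $x_m\to x^*\in K$, and since the unit sphere in $\R^n$ is compact, a further subsequence gives $v_m\to v^*$ with $\|v^*\|=1$.

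The key analytic step is to show that the eigenprojector $\mathrm{P}_{E(x)}$ depends continuously on $x\in K$. This is where the uniform spectral gap $\Delta>0$ is essential: for each $x\in K$ the circle $\Gamma_x\subset\mathbb{C}$ of radius $\Delta/2$ centred at $\lambda(x)$ encloses $\lambda(x)$ and excludes the remainder of $\mathrm{spec}(H_x)$. Continuity of $x\mapsto H_x$ and of $x\mapsto\lambda(x)$ (the latter follows from the former together with the isolation hypothesis) implies that on a neighbourhood of any $x_0\in K$ the contour $\Gamma_x$ can be chosen uniformly, and the Riesz representation
\[
\mathrm{P}_{E(x)}=\frac{1}{2\pi i}\oint_{\Gamma_x}\bigl(zI-H_x\bigr)^{-1}\,dz
\]
depends continuously on $x$ in operator norm. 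A covering/compactness argument then yields global continuity of $x\mapsto\mathrm{P}_{E(x)}$ on $K$. Applying this to the convergent sequence gives $\mathrm{P}_{E(x_m)}v_m\to\mathrm{P}_{E(x^*)}v^*$, and because $v_m\in E(x_m)$ we have $\mathrm{P}_{E(x_m)}v_m=v_m$, hence $\mathrm{P}_{E(x^*)}v^*=v^*$, i.e.\ $v^*\in E(x^*)$.

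For the tangent spaces, smoothness of $\gM_F$ implies that $x\mapsto \mathrm{P}_{\tangent{x}{\gM_F}}$ is continuous on $\gM_F$, hence on $K$. Combining continuity of the two projectors with $v_m\to v^*$ gives
\[
\|\mathrm{P}_{\tangent{x^*}{\gM_F}}v^*\|=\lim_{m\to\infty}\|\mathrm{P}_{\tangent{x_m}{\gM_F}}v_m\|=1,
\]
which forces $v^*\in\tangent{x^*}{\gM_F}$. Thus $v^*\in E(x^*)\cap\tangent{x^*}{\gM_F}$ is a unit vector, contradicting the standing hypothesis $E(x^*)\cap\tangent{x^*}{\gM_F}=\{0\}$. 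Therefore a uniform $\theta>0$ must exist.

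The main obstacle is the spectral continuity step: without the uniform isolation $\Delta$, eigenvalues can cross and eigenprojectors can jump, so $\mathrm{P}_{E(x_m)}v_m$ would not necessarily land in $E(x^*)$ in the limit. The contour construction together with the uniform gap $\Delta$ is exactly what neutralises this difficulty; everything else is routine compactness and continuity of smooth tangent spaces.
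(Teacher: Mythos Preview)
Your proof is correct and follows essentially the same approach as the paper: both hinge on continuity of the spectral projector via the Riesz contour integral (enabled by the uniform gap $\Delta$), continuity of $x\mapsto\tangent{x}{\gM_F}$ from smoothness of $\gM_F$, and compactness of $K$. The only cosmetic difference is packaging: the paper argues directly by forming the compact ``unit-sphere bundle'' $\gK=\{(x,v):x\in K,\ v\in E(x),\ \|v\|=1\}$ and applying the extreme value theorem to $\varphi(x,v)=\|\mathrm{P}_{\tangent{x}{\gM_F}}v\|$, whereas you unfold that same argument as a sequential-compactness contradiction.
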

\begin{proof}
\emph{Step 1. Continuous subspace field produced by the Riesz projector}. 
By hypothesis, for every $x\in\gN\cap\gM_F$ we have
\[
\min_{\mu\in\mathrm{spec}(H_x)\setminus\{\lambda(x)\}}
|\mu-\lambda(x)|\;\ge\;\Delta\;>\;0\quad\forall\,x\in \gN\cap\gM_F\,.
\]
Hence one can choose a disjoint small closed contour
$\gamma(x)$ of radius $\tfrac12\Delta$ around
$\lambda(x)$. By construction this contour encloses precisely the $\lambda$‐cluster and avoids all other eigenvalues of $H_x$. Then, the Riesz projection onto the $\lambda$–eigenspace,
\[
\mathrm{R}(x)
=\frac{1}{2\pi i}\!\oint_{\gamma}
(z-H_x)^{-1}\,dz\,,
\]
is well‑defined and depends continuously on $x$ (cf.\ Kato~\cite{kato1995perturbation} Ch.\ II §1.4). 

Set
\[
E(x) = \mathrm{im} \,R(x)\,.
\]
Thus $x \mapsto E(x)$ is a continuous map $\gN\cap\gM_F \to \Gr(\R^n)$ into the Grassmannian \ie, it forms a continuous field of subspaces.

\emph{Step 2. Compact unit-sphere field}.
Define
\[
\gK=\bigl\{(x,v) : x\in\gN\cap\gM_F, \,v\in E(x),\, \|v\|=1\bigr\}\,.
\]
Continuity of $x \mapsto E(x)$ implies that the set is closed in the product $\gN\cap\gM_F \times \mathbb{S}^{\,n-1}$; because the base $\gN\cap\gM_F$ is compact (since $\gM_F$ is closed - see Lemma~\ref{lem:intersection-compact}). Hence, the set $\gK$ is compact. 

\emph{Step 3. Uniform angle gap}.
For any $(x,v)\in\gK$, we have $v\notin \tangent{x}{\gM_F}$ by the hypothesis $E(x) \cap \tangent{x}{\gM_F} = \{0\}$. We note that this trivial intersection is generic (Lemma~\ref{lem:generic-subspace-intersection}). Because orthogonal projection decreases norm unless the vector is in the target space,
\[
\varphi(x,v) \coloneqq \|P_{\tangent{x}{\gM_F}}(v)\|<1\,.
\]
The function $\varphi: \gN\cap\gM_F \to [0,1)$ is continuous; compactness of $\gK$ gives a global maximum $\delta=\max_{\gK}\varphi<1$ (by the extreme value theorem). Choosing $\theta=\arccos\delta>0$ yields
$\angle(v,\tangent{x}{\gM_F})\ge\theta$ for all $(x,v)\in\gK$, as claimed.
\end{proof}

\begin{corollary}[Uniform Angle for the $\minEigen$–Eigenspace]
\label{cor:uniform-angle-min}
Let $K\coloneqq\gN\cap\gS\cap\gM_F\subset\R^{n}$, where $\gN$ is a closed bounded neighbourhood of a local minimiser of $f\in C^{2}$, $\gS$ is the Morse--Bott critical locus of $f$, and $\gM_F$ is a smooth submanifold (so $K$ is compact - see Lemma~\ref{lem:intersection-compact}). Set
\[
H_x\coloneqq\nabla^{2}f(x)\bigl|_{N_x\gS},
\qquad x\in K\,,
\]
so that $H_x$ is positive–definite. Suppose

\begin{enumerate}[label=(\alph*)]
    \item the smallest eigenvalue $\minEigen(x)$ (mult.\ $m\ge 1$) of $H_x$ is isolated by a uniform gap 
    \[
    \lambda_{n-m}(x)-\minEigen(x)\ge\Delta_{\min}>0\,,
    \]
    \item its eigenspace $E_{\min}(x)\coloneqq\ker\bigl(H_x-\minEigen(x)I\bigr)$ satisfies the trivial intersection \[
    E_{\min}(x)\cap \tangent{x}{\gM_F}=\{0\},\qquad\forall x\in K\,.
    \]
\end{enumerate}
Then Lemma~\ref{lem:uniform-angle-bound} yields a constant $\theta>0$ such that, for every $x\in K$ and every unit $v\in E_{\min}(x)$,
\[
\angle\bigl(v,\tangent{x}{\gM_F}\bigr)\ge\theta\,.
\]
\end{corollary}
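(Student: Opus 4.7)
The plan is to verify that the hypotheses of Lemma~\ref{lem:uniform-angle-bound} are satisfied in the present setting and then invoke that lemma directly, with $\lambda(x) = \minEigen(x)$ and $E(x) = E_{\min}(x)$. First, I would confirm that the domain $K = \gN \cap \gS \cap \gM_F$ is compact: this follows from Lemma~\ref{lem:intersection-compact}, since $\gS$ is closed as a joint preimage of $\{f_\gS\}$ under $f \in C^1$ and $\{0\}$ under $\nabla f$, $\gM_F$ is closed as the graph of a continuous map, and $\gN$ is compact by hypothesis.

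Next, I would check that the map $x \mapsto H_x$ defines a continuous field of symmetric operators on $K$. Since $f \in C^2$, the ambient Hessian $x \mapsto \nabla^2 f(x)$ is continuous. The Morse--Bott property ensures that $\gS$ is a $C^1$ submanifold, so the normal space $\normal{x}{\gS}$ varies continuously in the Grassmannian, and hence the orthogonal projector $P_{\normal{x}{\gS}}$ depends continuously on $x$. Writing $H_x = P_{\normal{x}{\gS}}\,\nabla^2 f(x)\,P_{\normal{x}{\gS}}$ realises $H_x$ as a continuous symmetric-operator field on $K$, which is precisely the regularity required by Lemma~\ref{lem:uniform-angle-bound}.

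With continuity and compactness in place, the remaining two hypotheses of the lemma are immediate: assumption~(a) of the corollary, $\lambda_{n-m}(x) - \minEigen(x) \ge \Delta_{\min} > 0$, supplies the uniform spectral isolation with $\Delta = \Delta_{\min}$, while assumption~(b), $E_{\min}(x) \cap \tangent{x}{\gM_F} = \{0\}$, supplies the required trivial intersection on all of $K$. Applying Lemma~\ref{lem:uniform-angle-bound} produces the uniform constant $\theta > 0$ such that $\angle(v, \tangent{x}{\gM_F}) \ge \theta$ for every $x \in K$ and every unit $v \in E_{\min}(x)$, which is exactly the claim.

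The only potentially delicate point is the continuity of $H_x$ as a field of operators restricted to the varying normal space $\normal{x}{\gS}$; once this is formulated via the continuous projector $P_{\normal{x}{\gS}}$ and combined with the Morse--Bott smoothness of $\gS$, the rest of the argument is bookkeeping and a direct appeal to the lemma. Everything else—including the compactness of $K$, the constancy of the multiplicity $m$ on $K$ implied by the spectral gap (a Riesz-projector argument as in the proof of the lemma), and the strict inequality $\|P_{\tangent{x}{\gM_F}}v\| < 1$ that drives the extreme-value argument—is inherited from Lemma~\ref{lem:uniform-angle-bound} without modification.
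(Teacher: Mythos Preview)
Your proposal is correct and matches the paper's approach exactly: the corollary in the paper carries no separate proof and simply states that Lemma~\ref{lem:uniform-angle-bound} yields the conclusion, so the entire content is the verification of that lemma's hypotheses, which you carry out explicitly. Your added detail on the continuity of $x\mapsto H_x$ via the projector $P_{\normal{x}{\gS}}$ is a useful elaboration that the paper leaves implicit, but the route is the same.
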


\begin{corollary}[Uniform Angle for the $\maxSigma$–Singular Subspace]
\label{cor:uniform-angle-max}
Let $K\coloneqq\gN\cap\gM_F\subset\R^{n}$, where $\gN$ is a closed bounded neighbourhood of a local minimiser of $f\in C^{2}$ and $\gM_F$ is a smooth submanifold (so $K$ is compact - see Lemma~\ref{lem:intersection-compact}).
For $x\in K$ set
\[
H_x\coloneqq\nabla^{2}f(x),\qquad
\sigma_{\max}(x)\coloneqq\text{largest singular value of }H_x.
\]

Assume
\begin{enumerate}[label=(\alph*)]
    \item the largest singular value $\maxSigma(x)$ (mult.\ $p\ge 1$) of $H_x$  is isolated by a uniform gap 
    \[
    \sigma_{\max}(x)-\sigma_{p+1}(x)\ge\Delta_{\max}>0\,,
    \]
    \item the eigenspace $E_{\max}(x)\coloneqq\ker\bigl(H_x^2-\maxSigma^2(x)I\bigr)$ satisfies the trivial intersection 
    \[
    E_{\max}(x)\cap \tangent{x}{\gM_F}=\{0\},\qquad\forall x\in K\,.
    \]
\end{enumerate}

Then, by Lemma~\ref{lem:uniform-angle-bound} (with the eigenvalue cluster $\{\maxSigma^{2}(x)\}$ of $H_x^{2}$), there exists $\theta>0$ such that for every $x\in K$ and every unit $v\in E_{\max}(x)$,
\[
\angle\bigl(v,\tangent{x}{\gM_F}\bigr)\;\ge\;\theta\,.
\]
\end{corollary}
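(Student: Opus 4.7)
The plan is to reduce the statement to a direct invocation of Lemma~\ref{lem:uniform-angle-bound}, by applying that lemma to the squared Hessian field $x\mapsto H_x^2$ instead of $x\mapsto H_x$. The reason is that the eigenspace $E_{\max}(x)$ is defined in the statement as $\ker(H_x^2-\maxSigma^2(x) I)$, which is intrinsically a spectral object of $H_x^2$ rather than of $H_x$ itself (since $H_x$ may have two eigenvalues $\pm\maxSigma(x)$ contributing to $E_{\max}(x)$).

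First, I would record that the base set $K=\gN\cap\gM_F$ is compact: $\gN$ is compact by hypothesis, and $\gM_F$ is closed in $\R^n$ (being the pre-image of $\{0\}$ under the continuous map $(x_1,x_2)\mapsto x_2-\Psi(x_1)$), so their intersection is a closed subset of a compact set. This is the same argument used in Lemma~\ref{lem:intersection-compact}, applied to the feasible-manifold factor only. Next, observe that since $H_x=\nabla^2 f(x)$ is symmetric and depends continuously on $x$ (because $f\in C^2$), the map $x\mapsto H_x^2$ is a continuous map from $K$ into $\mathrm{Sym}(n)$, with eigenvalues $\sigma_1^2(x)\ge\sigma_2^2(x)\ge\cdots\ge\sigma_n^2(x)$ and top eigenspace exactly $E_{\max}(x)$.

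The second step is to translate the uniform gap. From assumption (a), $\maxSigma(x)-\sigma_{p+1}(x)\ge\Delta_{\max}$, and since $\sigma_{p+1}(x)\ge 0$, we also have $\maxSigma(x)\ge\Delta_{\max}$. Consequently,
\[
\maxSigma^2(x)-\sigma_{p+1}^2(x)=\bigl(\maxSigma(x)-\sigma_{p+1}(x)\bigr)\bigl(\maxSigma(x)+\sigma_{p+1}(x)\bigr)\ge\Delta_{\max}\cdot\maxSigma(x)\ge\Delta_{\max}^2>0.
\]
Thus the eigenvalue cluster $\{\maxSigma^2(x)\}$ of $H_x^2$ is uniformly isolated from the rest of the spectrum by a positive gap $\Delta_{\max}^2$. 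Assumption (b) of the corollary gives directly the required trivial intersection $E_{\max}(x)\cap\tangent{x}{\gM_F}=\{0\}$ for every $x\in K$.

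Finally, Lemma~\ref{lem:uniform-angle-bound} applies verbatim with the data $(H_x^2,\,\lambda(x)=\maxSigma^2(x),\,\Delta=\Delta_{\max}^2,\,E(x)=E_{\max}(x))$ on the compact set $K$, producing a uniform constant $\theta>0$ such that $\angle(v,\tangent{x}{\gM_F})\ge\theta$ for every $x\in K$ and every unit $v\in E_{\max}(x)$. There is no substantial obstacle here: the only subtlety is the observation that one must square the Hessian before invoking the lemma, and this in turn is why squaring the gap still yields a strictly positive isolation $\Delta_{\max}^2$. All remaining components (compactness of $K$, continuity of $x\mapsto H_x^2$, the Riesz-projector construction of a continuous eigenspace field, and the extreme-value argument for $\theta$) are already packaged inside Lemma~\ref{lem:uniform-angle-bound}.
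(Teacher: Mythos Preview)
Your proposal is correct and follows exactly the approach the paper indicates in the corollary statement itself: apply Lemma~\ref{lem:uniform-angle-bound} to the squared Hessian $H_x^2$ with the eigenvalue cluster $\{\maxSigma^2(x)\}$. Your added detail on translating the singular-value gap into an eigenvalue gap for $H_x^2$ (yielding $\Delta_{\max}^2$) is a useful elaboration that the paper leaves implicit.
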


\begin{lemma}[Genericity of Trivially-Intersecting Subspaces]
\label{lem:generic-subspace-intersection}
Let\/ $0<k,l<N$ with $k+l<N$.  Write $\Gr(k,N)$ (resp.\ $\Gr(l,N)$) for the real Grassmannian of $k$–planes (resp.\ $l$–planes) in\/ $\R^{N}$ and set
\[
\Sigma =\{(\gU,\gV)\in\Gr(k,N)\times\Gr(l,N)\mid \gU\cap\gV\neq\{0\}\}\,.
\]
Then $\Sigma$ is a real-algebraic subset of
$\Gr(k,N)\times\Gr(l,N)$ of codimension
\[
\operatorname{codim}\Sigma = N-(k+l)+1 \;\ge 1\,.
\]
Consequently $\Sigma$ has empty interior and Lebesgue measure\/ $0$; in
particular a generic pair of planes satisfies $\gU\cap\gV=\{0\}$.
\end{lemma}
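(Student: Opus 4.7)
The plan is to realise $\Sigma$ as the image of a smooth real-algebraic incidence variety under a proper projection, compute its dimension via a fibering over the projective space of lines, and then invoke the classical fact that a proper real-algebraic subset of a smooth manifold has empty interior and Lebesgue measure zero. Algebraicity of $\Sigma$ itself will follow by working in local Stiefel/Pl\"ucker charts: the condition $\gU \cap \gV \neq \{0\}$ is equivalent to the $N \times (k+l)$ concatenation $[\gU \mid \gV]$ having rank strictly below $k+l$, which is the simultaneous vanishing of all $(k+l) \times (k+l)$ minors---a polynomial condition on each chart.

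For the dimension count I would introduce the incidence variety
\[
\mathcal{I} \coloneqq \bigl\{(\gU,\gV,L) \in \Gr(k,N) \times \Gr(l,N) \times \Gr(1,N) : L \subset \gU \cap \gV \bigr\}\,,
\]
and exploit its two projections. The projection $q : \mathcal{I} \to \Gr(1,N)$ is a locally trivial fibre bundle whose fibre over $L$ is $\{\gU \supset L\} \times \{\gV \supset L\} \cong \Gr(k-1,N-1) \times \Gr(l-1,N-1)$, giving
\[
\dim \mathcal{I} = (N-1) + (k-1)(N-k) + (l-1)(N-l)\,.
\]
The other projection $\pi : \mathcal{I} \to \Gr(k,N) \times \Gr(l,N)$ has image exactly $\Sigma$; on the open stratum where $\dim(\gU \cap \gV) = 1$ it is injective (the unique line is $\gU \cap \gV$ itself), while the locus $\dim(\gU \cap \gV) \ge 2$ is a proper subvariety of strictly lower dimension. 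Hence $\dim \Sigma = \dim \mathcal{I}$, and subtracting from $k(N-k) + l(N-l)$ yields $\operatorname{codim} \Sigma = (N-k) + (N-l) - (N-1) = N - (k+l) + 1 \ge 1$. Empty interior and measure zero then follow immediately, since any proper real-algebraic subset of a connected smooth manifold is nowhere dense and has measure zero with respect to any smooth density.

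The main technical obstacle is justifying that $\pi$ is generically one-to-one---equivalently, that the open stratum $\{\dim(\gU \cap \gV) = 1\}$ is non-empty inside $\Sigma$, so that $\dim \Sigma$ genuinely equals $\dim \mathcal{I}$ rather than being strictly smaller. Since $k, l \ge 1$ and $k + l < N$, this can be verified by an explicit construction: fix a line $L$, pick a $(k-1)$-plane and an $(l-1)$-plane in $\R^{N}/L$ that are in general linear position, and lift these to obtain $\gU, \gV$ with $\gU \cap \gV = L$. By upper-semicontinuity of intersection dimension the resulting locus is then open and dense in $\Sigma$. Once this point is established, the Grassmannian fibre-dimension bookkeeping and the measure-theoretic conclusion are routine.
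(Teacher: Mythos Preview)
Your proof is correct, but it follows a different route from the paper. Both arguments establish algebraicity in the same way (the rank-drop criterion on the concatenated $N\times(k+l)$ matrix, expressed via vanishing of maximal minors in Stiefel charts). For the codimension count, however, the paper works directly in matrix space: it identifies $\Sigma$ with the image of the determinantal variety $D=\{M\in\R^{N\times(k+l)}:\operatorname{rank}M\le k+l-1\}$, quotes the standard formula $\operatorname{codim}\{\operatorname{rank}\le r\}=(N-r)(m-r)$ with $m=k+l$ and $r=k+l-1$, and then descends to the product of Grassmannians via the Stiefel-to-Grassmannian submersion (quotient by $O(k)\times O(l)$), which preserves codimension. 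Your approach instead runs the classical incidence-correspondence argument: fiber the flag variety $\mathcal{I}$ over $\Gr(1,N)$ to read off $\dim\mathcal{I}$, then project to $\Sigma$ and argue generic injectivity. The paper's method is shorter if one is willing to cite the determinantal codimension formula as a black box; your method is more self-contained in that the dimension count is done from scratch via fibre bundles, at the cost of having to verify (as you do) that the stratum $\dim(\gU\cap\gV)=1$ is non-empty so that $\pi$ is generically one-to-one. Both are standard and equally rigorous.
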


\begin{proof}
\emph{Step 1.  Rank-drop criterion.}
Choose full-rank matrices
$
U\in\R^{N\times k},\;
V\in\R^{N\times l}
$
whose column spaces are $\gU$ and $\gV$, and form the
$N\times(k+l)$ matrix $M=[U\;V]$.
We have
\[
\gU\cap\gV\neq\{0\}
\quad\Longleftrightarrow\quad
\operatorname{rank}M \le k+l-1\,.
\]
Thus the ``bad'' locus $\Sigma$ is the image in $\Gr(k,N)\times\Gr(l,N)$ of the determinantal variety
\[
D \coloneqq\bigl\{M\in\R^{N\times(k+l)}\mid\operatorname{rank}M\le k+l-1\bigr\}\,,
\]
defined by the vanishing of all $(k+l)\times(k+l)$ minors of $M$.

\medskip
\emph{Step 2.  Codimension in matrix space.}
For general integers $N,m,r$ with $r<m\le N$,
\[
\dim\{N\times m\text{ matrices of rank }\le r\}=(N+m)r-r^{2}\,,
\]
hence
\[
\operatorname{codim}_{\R^{N\times m}}\{\,\mathrm{rank}\le r\}
\;=\;Nm-(N+m)r+r^{2}=(N-r)(m-r)\,.
\]
Taking $m=k+l$ and $r=k+l-1$ gives
\[
\operatorname{codim}_{\R^{N\times(k+l)}}D = (N-(k+l)+1)\,.
\]
Because $k+l<N$ by hypothesis, this number is $\ge1$.

\medskip
\emph{Step 3.  Passage to the Grassmannians.}
The product of Stiefel manifolds
$
\mathrm{St}(k,N)\times\mathrm{St}(l,N)\subset\R^{N\times(k+l)}\,,
$
is an open subset of the full-rank matrices, so intersecting $D$ with
it cannot decrease codimension.  Next, the quotient map
\[
\mathrm{St}(k,N)\times\mathrm{St}(l,N)\longrightarrow
\Gr(k,N)\times\Gr(l,N),
\quad
(Q_1,Q_2)\mapsto(\operatorname{col}(Q_1),\operatorname{col}(Q_2))\,,
\]
is a smooth submersion with compact fibres $O(k)\times O(l)$, which
preserves codimension.  Consequently
\[
\operatorname{codim}_{\Gr(k,N)\times\Gr(l,N)}\Sigma=N-(k+l)+1\,.
\]

\medskip
\emph{Step 4.  Genericity.}
Because $\Sigma$ is a proper real-algebraic subset of positive
codimension, it has Lebesgue measure $0$ and empty interior. Its complement is therefore dense (indeed Zariski open) and of
full measure, so a generic pair $(\gU,\gV)$ satisfies
$\gU\cap\gV=\{0\}$.
\end{proof}

\subsection{Genericity of Single Eigenvalues}
\label{appdx:subsec:genericity-of-single-eigenvalues}

\begin{lemma}[Codimension of the Repeated Eigenvalue Locus~\cite{dana2006codimension, kato1995perturbation}]
\label{lem:repeated-eigenvalue-codimension}
    Let 
    \[
    S(n) = \{A \in \R^{n\times n}: A= A^\top\}\, ,
    \]
    be the space of real symmetric $n\times n$ matrices, and let,
    \[
    \Sigma = \{A \in S(n): \,\text{$A$ has a repeated eigenvalue}\}\, .
    \]
    Then $\Sigma$ is an algebraic subset of $S(n)$ (namely, the zero set of the discriminant of the characteristic polynomial). 
    
    At a matrix $A_0\in \Sigma$ where an eigenvalue $\lambda_0$ has multiplicity exactly $2$ (with the other eigenvalues distinct and different from $\lambda_0$), the condition that a nearby matrix has a double eigenvalue imposes two independent (local) constraints on its parameters. 
    
    Consequently, the codimension of $\Sigma$ in $S(n)$ is at least $2$.
\end{lemma}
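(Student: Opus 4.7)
The plan is to separate the statement into its two parts: first the algebraic nature of $\Sigma$, then the local codimension bound at a generic point with exactly one double eigenvalue.

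For the first part, I would recall that the discriminant $\Delta(A) \coloneqq \prod_{i<j}(\lambda_i(A)-\lambda_j(A))^2$ of the characteristic polynomial $\chi_A(t) = \det(tI-A)$ is a symmetric polynomial in the eigenvalues of $A$, and hence by the fundamental theorem of symmetric polynomials, it is a polynomial in the coefficients of $\chi_A$, which are themselves polynomial in the entries of $A$. The matrix $A$ has a repeated eigenvalue iff $\chi_A$ has a repeated root iff $\Delta(A)=0$, so $\Sigma = \{A \in S(n) : \Delta(A) = 0\}$ is a real-algebraic subvariety of $S(n)$.

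For the codimension statement, I would localise around $A_0$, which has a unique double eigenvalue $\lambda_0$ well separated from the remaining eigenvalues. The main tool is the Riesz spectral projector $P(A) = \tfrac{1}{2\pi i}\oint_\gamma (zI-A)^{-1}\,dz$ on a small loop $\gamma$ enclosing only $\lambda_0$; by standard perturbation theory this is analytic in $A$ on a neighbourhood $U$ of $A_0$ and $\operatorname{im} P(A)$ is two-dimensional. Taking a smooth orthonormal frame $U(A) \in \R^{n\times 2}$ of $\operatorname{im} P(A)$ (for instance via QR of $P(A)U(A_0)$), I obtain a smooth compression $B(A) \coloneqq U(A)^\top A\, U(A) \in \operatorname{Sym}(2)$ whose two eigenvalues are exactly the two eigenvalues of $A$ near $\lambda_0$. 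Thus $A$ has a double eigenvalue near $\lambda_0$ iff $B(A)$ is a scalar multiple of $I_2$, iff the smooth map
\[
g : U \to \R^2,\qquad g(A) \coloneqq \bigl(B_{11}(A) - B_{22}(A),\; B_{12}(A)\bigr),
\]
vanishes at $A$. Since $\Sigma \cap U \subseteq g^{-1}(0)$, it suffices to show that $g$ is a submersion at $A_0$.

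The crux is then the derivative computation. I would differentiate $B(A) = U(A)^\top A\, U(A)$ and use the two facts that $A_0 U(A_0) = \lambda_0\, U(A_0)$ and $U(A)^\top U(A) = I_2$ (so that $\Der U(A_0)[H]^\top U(A_0) + U(A_0)^\top \Der U(A_0)[H] = 0$). Together these make the $\Der U$-contributions cancel, leaving the clean formula
\[
\Der B(A_0)[H] = U(A_0)^\top H\, U(A_0), \qquad H \in S(n).
\]
Writing $U(A_0) = [e_1,\,e_2]$, the symmetric perturbations $H_1 = e_1 e_1^\top - e_2 e_2^\top$ and $H_2 = e_1 e_2^\top + e_2 e_1^\top$ then produce linearly independent images $(2,0)$ and $(0,2)$ under $\Der g(A_0)$, so $\Der g(A_0) : S(n) \to \R^2$ is surjective. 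By the constant-rank theorem, $g^{-1}(0)$ is locally a smooth submanifold of codimension $2$ in $S(n)$, and hence $\operatorname{codim}_{S(n)} \Sigma \ge 2$ at $A_0$.

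The main obstacle I expect is carefully justifying the cancellation that yields $\Der B(A_0)[H] = U(A_0)^\top H\, U(A_0)$: the terms containing $\Der U(A_0)[H]$ are a priori nontrivial, and the argument relies on using the eigenvalue equation at $A_0$ together with differentiation of the orthonormality constraint in the right order. Everything else is either standard perturbation theory (analyticity of the Riesz projector, smoothness of the frame) or elementary linear algebra on a $2\times 2$ symmetric matrix.
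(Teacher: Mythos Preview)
The paper does not actually prove this lemma: it is stated with citations to \cite{dana2006codimension, kato1995perturbation} and then immediately used in the subsequent remark, so there is no in-paper argument to compare against. Your proposal therefore goes further than the paper, supplying a complete self-contained proof where the paper simply invokes the literature.

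On the substance, your argument is correct. The discriminant identification of $\Sigma$ is standard. For the local part, the Riesz-projector compression to a $2\times2$ block $B(A)$ is the right tool, and your key computation of $\Der B(A_0)[H]$ is valid: the two $\Der U$-terms combine to $\lambda_0\bigl(\Der U^\top U + U^\top \Der U\bigr)=0$ precisely because $A_0 U(A_0)=\lambda_0 U(A_0)$ and $U^\top U=I_2$, which is exactly the cancellation you anticipated. One minor arithmetic slip: with $H_2=e_1e_2^\top+e_2e_1^\top$ you get $\Der g(A_0)[H_2]=(0,1)$, not $(0,2)$; this is inconsequential since $(2,0)$ and $(0,1)$ are still independent and $\Der g(A_0)$ is surjective. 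You might also note explicitly that, by continuity of eigenvalues, on a small enough $U$ the remaining $n-2$ eigenvalues stay simple and separated, so that $\Sigma\cap U$ really is contained in $g^{-1}(0)$; you assert this but it deserves a sentence.
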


\begin{remark}[Generic Simplicity of the Eigenvalues of Symmetric Matrices]
\label{remark:generic-simplicity-eigenvalues}
    Lemma~\ref{lem:repeated-eigenvalue-codimension} shows that the set of symmetric matrices with repeated eigenvalues is a real-analytic subset of $\mathcal{S}_n$, which can be decomposed into a finite union of smooth submanifolds, each of codimension at least two. Since smooth submanifolds of codimension $\geq 2$ have Lebesgue measure zero (by applying Sard's theorem~\cite{sard1942measure}), it follows that this set has measure zero. Therefore, the complement---the set of symmetric matrices with simple (distinct) eigenvalues---has full measure. In other words, simple eigenvalues are generic among symmetric matrices.
\end{remark}

\section{Transversality and Clean Intersection of Smooth Manifolds}
\label{appdx:transversal-clean}

To study the geometry of constrained critical sets, we begin by recalling two foundational concepts that govern how smooth manifolds intersect: transversality~\cite[Chapter 3]{hirsch1997differential} and clean intersection~\cite[Appendix C.3]{hormander2007analysis}. These notions describe the local structure and regularity of the intersection and play a central role in establishing genericity results for solution sets in optimisation and variational problems. To build geometric intuition, we illustrate examples of transverse and clean (but non-transverse) intersections in Figure~\ref{fig:transversal-clean-intersections-examples}.

Let
\[
\gM,\;\gN\;\subset\;\gZ\,,
\]
be embedded submanifolds of a smooth manifold $\gZ$, with
\[
\dim\gM=r,\quad
\dim\gN=k,\quad
\dim\gZ=l\,.
\]
The classical condition of transversality ensures that two manifolds intersect in general position, meaning their tangent spaces at each point of intersection span the ambient space. This condition is central in differential topology and guarantees that intersections behave stably under perturbation. However, in many geometric and optimisation contexts, transversality is unnecessarily strong. For example, if the sum of the dimensions satisfies $r + k < l$, then transversality cannot hold unless the intersection is empty. Consequently, generic perturbations that enforce transversality may eliminate meaningful intersections entirely.

This behaviour is undesirable in applications where the intersection encodes feasible or optimal solutions---properties we wish to preserve. To accommodate more flexible and structured intersections, the notion of clean intersection provides a weaker but still geometrically meaningful alternative. Clean intersection allows the tangent spaces to align nontrivially, as long as the intersection remains a smooth submanifold with compatible tangent structure. This relaxation offers a more flexible framework that preserves non-empty intersections under generic, local perturbations, which is exactly the setting we consider in our problem.

We now formalise both notions:

\begin{definition}[Transversality]
\label{def:transversality}
We say that $\gM$ and $\gN$ intersect transversally, denoted by $\gM \pitchfork \gN$, if for every point $x \in \gM\cap\gN$ the tangent spaces satisfy
\[
\tangent{x}{\gM} + \tangent{x}{\gN} = \tangent{x}{\gZ}\,,
\]
or, equivalently, the normal spaces satisfy
\[
\normal{x}{\gM}\cap \normal{x}{\gN} = \{0\}\,.
\]
\end{definition}
\begin{definition}[Clean Intersection]
\label{def:clean-intersection}
We say that $\gM$ and $\gN$ intersect cleanly, denoted by $\gM \Cap \gN$, if for every point $x\in\gM\cap\gN$, we have that the dimension $d\coloneqq\dim(\tangent{x}{\gM}\cap\tangent{x}{\gN})$ is constant and
\[
\tangent{x}{(\gM\cap\gN)} = \tangent{x}{\gM}\cap \tangent{x}{\gN}\,.
\]
\end{definition}

\begin{remark}
Transversality implies a clean intersection between manifolds but the converse is not always true.
\end{remark}

To rigorously analyse generic properties of smooth manifolds and function spaces, we work within the topological framework of residual sets. This notion allows us to make precise what it means for a property to hold ``generically''---that is, to be true for a large and stable class of objects. In particular, we will be concerned with residual subsets in spaces of smooth functions or immersions, equipped with the Whitney $C^r$ topology.

\begin{definition}[Residual set \& generic property]
\label{def:residual-generic}
Let $\gX$ be a topological space.  A subset
\[
A \subset \gX\,,
\]
is called \emph{residual} if it contains a countable intersection of dense open sets in $\gX$.  If $\gX$ is a Baire space (\eg, any Banach or Fréchet space, with its usual topology), then every residual set is itself dense in $\gX$.

A property $\mathscr P$ of points in $\gX$ is said to hold \emph{generically} if the set
\[
\{x\in \gX : x \text{ satisfies }\mathscr P\}\,,
\]
is residual in $\gX$.
\end{definition}

Within this framework, differential topology offers a foundational result: transversality is a generic condition, as we see in the following Lemma. That is, for a large class of smooth mappings, transversality to a fixed submanifold holds generically under perturbation.

\begin{lemma}[Generic Transversality of Submanifolds]
\label{lem:generic-transversality-submanifolds}
Let $\gM,\gN\subset\gZ$ be submanifolds of a smooth manifold $\gZ$, and let $\mathscr{F}$ be the space of smooth immersions of one of the submanifolds into $\gZ$ (\eg, $\gM$). Then, the set of maps $\phi\in\mathscr{F}$ such that $\phi(\gM)\pitchfork\gN$ is residual in $\mathscr{F}$ with the Whitney $C^r$ topology for $r\ge 1$. In particular transverse intersection is generic under smooth perturbation.
\end{lemma}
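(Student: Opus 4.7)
The plan is to deduce this result from the classical Thom transversality theorem, whose proof rests on combining Sard's theorem with a parametric perturbation argument. Let $\gM$, $\gN$, $\gZ$ be as stated. First I would recall the \emph{parametric transversality principle}: if $F : \gM \times P \to \gZ$ is a smooth map from a smooth parameter manifold $P$ such that $F \pitchfork \gN$, then the set of $p \in P$ for which the slice $F(\cdot, p)$ is transverse to $\gN$ is of full Lebesgue measure in $P$. This follows from applying Sard's theorem to the projection $F^{-1}(\gN) \to P$, whose critical values correspond exactly to non-transverse slices.

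Next I would construct, for any given immersion $\phi_0 \in \mathscr{F}$, a finite-dimensional family of perturbations whose evaluation map is itself transverse (in fact, submersive) onto $\gZ$. Concretely, working in local charts, one composes $\phi_0$ with a family of compactly supported diffeomorphisms of $\gZ$ parametrised by $\R^l$ (\eg, translations in a chart multiplied by a cut-off). Because the evaluation map is a local submersion onto $\gZ$, it is automatically transverse to $\gN$, and the parametric principle supplies arbitrarily small parameters producing transverse maps. This yields density of the transverse subset in every neighbourhood of $\phi_0$ in the Whitney $C^r$ topology.

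To upgrade from density to residuality, I would choose a locally finite countable cover of $\gM$ by precompact open sets $\{U_i\}$ and, similarly, a countable cover of $\gN$ by precompact open sets $\{V_j\}$ with $\overline{V_j}$ contained in charts of $\gZ$. Define
\[
\mathscr{T}_{ij} = \bigl\{\phi \in \mathscr{F} : \phi|_{\overline{U_i}} \pitchfork \overline{V_j}\bigr\}\,.
\]
Each $\mathscr{T}_{ij}$ is open in the Whitney $C^r$ topology because transversality is stable under $C^1$-small perturbations on precompact pieces, and each is dense by the previous step. Since $\mathscr{F}$ is a Baire space under the Whitney topology, the countable intersection $\bigcap_{i,j} \mathscr{T}_{ij}$ is residual, and any $\phi$ in this intersection satisfies $\phi(\gM) \pitchfork \gN$ globally.

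The main obstacle will be the careful handling of the Whitney $C^r$ topology on a non-compact manifold: openness of the transversality condition is delicate because a perturbation that is small on a compact piece may fail to be globally small in the compact-open topology. Using the Whitney (strong) topology and a locally finite cover circumvents this, but verifying that $\mathscr{T}_{ij}$ is indeed open requires invoking the standard persistence result---that transversality along a compact subset survives $C^1$-small perturbations---and piecing these local guarantees together consistently via the locally finite structure of the cover. Once this is secured, the Baire argument closes the proof.
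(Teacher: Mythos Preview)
Your outline is correct and follows the classical route to Thom's transversality theorem: parametric transversality via Sard, locally submersive perturbation families, and a Baire argument over a countable exhaustion. The paper, however, does not prove this lemma at all in the substantive sense---it simply invokes the result as standard, citing Hirsch~\cite[Chapter~3, Theorems~2.1 and~2.9]{hirsch1997differential} for the fact that the set of maps transverse to a fixed submanifold is residual in the Whitney $C^r$ topology. So your proposal is not a different approach so much as an unpacking of the reference the paper appeals to; what you gain is self-containment, while the paper gains brevity by deferring to a textbook result.
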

\begin{proof}
This result follows directly from standard transversality theory, specifically the Parametric Transversality Theorem or Thom's Transversality Theorem. See, for example,~\cite[Chapter 3, Theorems 2.1 and 2.9]{hirsch1997differential}, where it is shown that the set of maps transverse to a fixed submanifold is residual in Whitney $C^r$ topology for any $r\ge 1$.    
\end{proof}

We now apply these concepts to the setting of critical points. Let $f \in C^2(\R^n)$ be a smooth function satisfying the Morse--Bott condition, and consider the set of critical points at a fixed value as defined below. We are particularly interested in the intersection of this critical locus with a fixed feasible manifold $\gM_F \subset \R^n$. While transverse intersection may fail due to dimension constraints, clean intersection remains a viable and meaningful condition---and, crucially, it holds generically within the space of $C^2$ functions.

\begin{definition}[Critical‐locus map]
\label{def:critical-locus}
Fix $c\in\R$ and a $C^2$ function $f:\R^n\to\R$ in which the Morse--Bott property holds. Define
\[
\mathscr{C}(x)=\bigl(f(x)-c,\nabla f(x)\bigr) :\R^n\to\R\times\R^n\,.
\]
Its zero‐set $\gS=\mathscr{C}^{-1}(0,0)$ is the Morse--Bott critical locus.
\end{definition}

\begin{lemma}[Equivalence of Clean Intersection and Local Transversality {\cite[Lemma~2.6]{bao2024computable}}]
\label{lem:clean-via-transversality}
Let $\gM, \gN \subset \gZ$ be submanifolds of a smooth manifold $\gZ$, and suppose that the dimension $\dim(\tangent{x}{\gM}\cap\tangent{x}{\gN})$ is locally constant for all $x\in\gM\cap\gN$. Then, the following are equivalent:
\begin{enumerate}
\denselist
\item $\gM\Cap\gN$;
\item For each $x\in\gM\cap\gN$, there exists a submanifold $\gN'\subset\gZ$ containing $\gN$ near $x$, with $\mathrm{codim}(\gN') = \dim\gM - \dim(\tangent{x}{\gM}\cap\tangent{x}{\gN})$ such that
\[
\gM\pitchfork\gN'\qquad\text{and}\qquad \gM\cap\gN = \gM\cap\gN'\qquad\text{locally near $x$}\,.
\]
\end{enumerate}
In particular, clean intersection between $\gM$ and $\gN$ near $x$ is equivalent to transversality of $\gM$ with an auxiliary submanifold $\gN'$ that locally agrees with $\gN$ on the intersection.
\end{lemma}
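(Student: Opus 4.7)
}
Fix $x \in \gM \cap \gN$ and let $T_\gM = \tangent{x}{\gM}$, $T_\gN = \tangent{x}{\gN}$, $T_\gZ = \tangent{x}{\gZ}$, $I = T_\gM \cap T_\gN$, with dimensions $r$, $k$, $l$, and $d$ respectively. The plan is to prove the two implications separately, with the main technical content residing in the construction of the auxiliary submanifold $\gN'$ for the direction $(1)\Rightarrow(2)$.

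For the easier direction $(2)\Rightarrow(1)$, I would start from the existence of $\gN'$ with $\gM \pitchfork \gN'$ and $\gM \cap \gN = \gM \cap \gN'$ locally near $x$. Transversality immediately gives that $\gM \cap \gN'$ is a submanifold with tangent space $T_\gM \cap T_{\gN'}$ of dimension $r + \dim \gN' - l = d$ (using the prescribed codimension of $\gN'$). Since $\gN \subset \gN'$ locally implies $T_\gN \subset T_{\gN'}$, we get the inclusion $I \subset T_\gM \cap T_{\gN'} = \tangent{x}{(\gM \cap \gN')} = \tangent{x}{(\gM \cap \gN)}$, and since both sides have dimension $d$ (the latter by hypothesis of locally constant intersection dimension), they coincide, establishing clean intersection.

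The harder direction $(1)\Rightarrow(2)$ is where the real work lies. Given clean intersection, I would build $\gN'$ so that its tangent at $x$ is $T_{\gN'} = T_\gN \oplus V$, where $V$ is any vector-space complement of $T_\gM + T_\gN$ inside $T_\gZ$. A quick linear-algebra check shows this choice simultaneously gives (i) $T_\gM + T_{\gN'} = T_\gZ$ (transversality at the tangent level), (ii) $T_\gM \cap T_{\gN'} = I$ (the decomposition argument: any $v = u + w$ with $u\in T_\gN,\, w\in V$ lying in $T_\gM$ forces $w \in V \cap (T_\gM + T_\gN) = \{0\}$), and (iii) the required codimension $r - d$. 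To realise $\gN'$ as an actual submanifold, I would use a tubular neighbourhood of $\gN$ in $\gZ$: in adapted coordinates $(y,z)$ where $\gN = \{z = 0\}$, the complement $V$ corresponds to a coordinate subspace of the normal directions, and $\gN'$ is defined by setting only the complementary normal coordinates to zero. This is manifestly a smooth submanifold containing $\gN$ locally with the desired tangent space.

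The final step is verifying $\gM \cap \gN = \gM \cap \gN'$ locally near $x$. The inclusion $\subset$ is immediate. For the reverse, transversality yields that $\gM \cap \gN'$ is a $d$-dimensional submanifold near $x$, while clean intersection yields that $\gM \cap \gN$ is also $d$-dimensional. Since $\gM \cap \gN$ is a closed subset of $\gM \cap \gN'$ (as $\gN$ is closed in $\gN'$ in the chart) and of the same dimension, it is both open and closed in the connected component of $\gM \cap \gN'$ through $x$, hence agrees with it on a neighbourhood. The main obstacle I anticipate is making the tubular-neighbourhood construction rigorous enough that the codimension is exactly $r - d$ and the tangent-space identity at $x$ propagates to the needed local equality of sets; everything else reduces to linear algebra and dimension counting.
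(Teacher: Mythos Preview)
The paper does not supply its own proof of this lemma; it is stated with a citation to \cite[Lemma~2.6]{bao2024computable} and invoked as a black box in the proof of Lemma~\ref{lem:generic-clean-critical}. Your proposal is a correct and self-contained argument: the linear-algebraic choice $T_{\gN'}=T_\gN\oplus V$ with $V$ a complement of $T_\gM+T_\gN$ in $T_\gZ$ is the natural construction, its realisation via a tubular-neighbourhood/adapted-coordinate chart is standard, and the clopen argument for upgrading the tangent-space equality to local equality of the intersection sets is sound. One small remark on the $(2)\Rightarrow(1)$ direction: the parenthetical ``the latter by hypothesis of locally constant intersection dimension'' is unnecessary---the dimension of $\tangent{x}{(\gM\cap\gN)}$ is already forced to be $d$ by the identification $\gM\cap\gN=\gM\cap\gN'$ with a transverse (hence $d$-dimensional) intersection, so no appeal to the standing hypothesis is needed there.
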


\begin{lemma}[Generic Clean Intersection of the Critical Locus]
\label{lem:generic-clean-critical}
Let $\gS = \mathscr{C}^{-1}(0,0)$ be the Morse--Bott critical locus of a $C^2$ function $f:\R^n \to \R$ at level $c \in \R$ as in Definition~\ref{def:critical-locus}, and let $\gM_F \subset \R^n$ be a fixed embedded submanifold. Then, clean intersection between $\gS$ and $\gM_F$ is a generic property in the space of $C^2$ functions under the Whitney topology.
\end{lemma}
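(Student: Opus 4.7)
The plan is to apply a parametric transversality argument to the critical-point evaluation map and then convert the resulting generic transversality into clean intersection via Lemma~\ref{lem:clean-via-transversality}. Concretely, I would introduce the evaluation map
\[
\mathrm{ev} : C^2(\R^n) \times \gM_F \;\longrightarrow\; \R \times \R^n, \qquad \mathrm{ev}(f,x) \;=\; \mathscr{C}(x) \;=\; \bigl(f(x)-c,\; \nabla f(x)\bigr),
\]
and verify that it is a submersion in the functional direction. Given any base point $(f_0, x_0)$ and any target vector $(a,b) \in \R \times \R^n$, the local perturbation $g(x) = \phi(x)\bigl(a + \langle b,\, x-x_0 \rangle\bigr)$, where $\phi$ is a compactly supported bump function equal to $1$ near $x_0$, satisfies $g(x_0)=a$ and $\nabla g(x_0)=b$. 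Hence the Fréchet derivative of $\mathrm{ev}$ along $g$ realises $(a,b)$, so $\mathrm{ev}$ is a submersion and in particular transverse to the single-point submanifold $\{(0,0)\}$.

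Because $C^2(\R^n)$ equipped with the strong Whitney topology is a Baire space, Thom's parametric transversality theorem (equivalently, Abraham's transversality theorem) then yields a residual set $\gR \subset C^2(\R^n)$ such that for every $f \in \gR$ the restricted map $\mathscr{C}_f|_{\gM_F}$ is transverse to $\{(0,0)\}$. Next I would convert this transversality into the clean-intersection statement: for $f$ satisfying Morse--Bott, the map $\mathscr{C}_f$ has constant rank $n - \dim \gS_f$ in a neighbourhood of $\gS_f$, so its image locally lies in a smooth submanifold $\gZ \subset \R \times \R^n$ of that dimension, and the origin is a regular value of $\mathscr{C}_f$ viewed as a map into $\gZ$. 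Interpreting the parametric transversality inside this restricted codomain gives that $\gS_f \cap \gM_F = (\mathscr{C}_f|_{\gM_F})^{-1}(0)$ is a smooth submanifold of $\gM_F$ with tangent space $\ker\bigl(D\mathscr{C}_f(x)|_{\tangent{x}{\gM_F}}\bigr) = \tangent{x}{\gS_f} \cap \tangent{x}{\gM_F}$ at each intersection point, which is precisely the defining property of a clean intersection. In the dimensionally degenerate regime where no nontrivial intersection can persist, transversality forces the intersection to be empty, and the clean-intersection condition holds vacuously.

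The main obstacle is this rank-interpretation step: naive transversality of $\mathscr{C}_f|_{\gM_F}$ to $\{(0,0)\}$ in the full codomain $\R \times \R^n$ is usually impossible, because $\dim \gM_F$ is strictly smaller than the total codimension, so the clean-intersection condition has to be read off the intrinsic constant-rank structure of $\mathscr{C}_f$ near $\gS_f$ rather than from surjectivity of differentials. This is exactly where Lemma~\ref{lem:clean-via-transversality} enters: it furnishes an auxiliary submanifold $\gN' \supset \gM_F$ of reduced codimension against which transversality of $\gS_f$ is both achievable and equivalent to the cleanness of $\gS_f \cap \gM_F$. The delicate point is to check that $\gN'$ can be chosen consistently across the parametric family as $f$ varies within $\gR$, so that the residuality established for the evaluation map transfers to residuality of the set $\{f : \gS_f \Cap \gM_F\}$. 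Once this bookkeeping is in place, the conclusion that clean intersection is generic in the Whitney $C^2$ topology follows directly.
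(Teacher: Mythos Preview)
Your approach is essentially the same as the paper's: both reduce clean intersection to transversality of $\gS$ with an auxiliary thickening $\gN'\supset\gM_F$ via Lemma~\ref{lem:clean-via-transversality}, and then invoke a generic-transversality theorem (the paper cites Lemma~\ref{lem:generic-transversality-submanifolds} directly, while you unpack it through the evaluation map and Thom's theorem). Your write-up is in fact more careful than the paper's at the one genuinely delicate point---that $\gN'$ depends on $f$ through $\dim(\tangent{x}{\gS_f}\cap\tangent{x}{\gM_F})$, so one cannot literally apply ``transversality to a \emph{fixed} submanifold is residual''---which the paper's short proof glosses over; your plan to handle this by stratifying over the possible intersection dimensions (or equivalently, by exploiting the constant-rank structure of $\mathscr{C}_f$ near $\gS_f$) is the right way to close that gap.
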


\begin{proof}
By Lemma~\ref{lem:clean-via-transversality}, clean intersection between $\gS$ and $\gM_F$ near a point $x \in \gS \cap \gM_F$ is equivalent to the existence of a submanifold $\gN \subset \R^n$ containing $\gM_F$ near $x$ such that
\[
\gS \cap \gM_F = \gS \cap \gN \quad \text{locally}, \quad \text{and} \quad \gS \pitchfork \gN\,.
\]
That is, clean intersection is locally equivalent to transversality between $\gS$ and an auxiliary submanifold $\gN$.

Since $\gS$ is defined as the zero set of the critical-locus map $\mathscr{C}(x) = (f(x) - c, \nabla f(x))$, it varies smoothly with $f$ in the Whitney $C^2$ topology. By Lemma~\ref{lem:generic-transversality-submanifolds}, transversality $\gS \pitchfork \gN$ is a generic condition among $C^2$ functions. Therefore, clean intersection $\gS \Cap \gM_F$ holds generically in the space of $C^2$ functions.
\end{proof}

\begin{remark}[Density and Small Perturbations]
Since every residual set in a Baire space is dense, the generic set
\[
\{\,f\in C^2\left(\R^n\right)\;:\;\mathrm{Crit}_c(f)\Cap \gM_F\}\,,
\]
is not only large in the topological sense but also satisfies:
for any given $f$ and any $\epsilon>0$, there exists a 
$g\in C^2\left(\R^n\right)$ with $\|g-f\|_{C^2}<\epsilon$ such that
$\mathrm{Crit}_c(g)\Cap \gM_F$.  

In other words, any non‑generic $f$ can be made generically clean by an arbitrarily small $C^2$ perturbation, without making the intersection empty.
\end{remark}

\begin{figure}[htbp]
  \refstepcounter{figure}
    \centering
    \begin{minipage}[b]{0.48\textwidth}
      \centering
      \begin{tikzpicture}
        \begin{axis}[
           view={60}{20},
           domain=-1:1, y domain=-1:1,
           samples=30,
           width=7cm, height=7cm,
           axis lines=none,
           ticks=none
          ]
          \addplot3[
              surf,
              shader=interp,
              opacity=0.8,
              colormap name=myorange
          ] {x^2 + y^2};
          
          \addplot3[
              surf,
              shader=interp,
              opacity=0.6,
              colormap name=myblue
          ] {1 + x + y};
        \end{axis}
      \end{tikzpicture}
       \captionof{subfig}{Transverse Intersection}
    \end{minipage}
    \hfill
    \begin{minipage}[b]{0.48\textwidth}
      \centering
      \begin{tikzpicture}
        \begin{axis}[
            view={60}{30},
            axis lines=center,
            xlabel={$x$}, ylabel={$y$}, zlabel={$z$},
            domain=0:2*pi,
            y domain=0:0,
            samples=20,
            ticks=none
          ]
          \addplot3[
            domain=0:2*pi,
            samples=50,
            thick,
            blue
          ]
          ({0.5 + sqrt(3/2)*cos(deg(x))},
           {0.5 + sqrt(3/2)*sin(deg(x))},
           {1 + (0.5 + sqrt(3/2)*cos(deg(x))) + (0.5 + sqrt(3/2)*sin(deg(x)))});
        \end{axis}
      \end{tikzpicture}
      \captionof{subfig}{Corresponding Intersection Set}
    \end{minipage}
    
    \vspace{0.5cm}

    \begin{minipage}[b]{0.48\textwidth}
      \centering
      \begin{tikzpicture}
        \begin{axis}[
            view={80}{20},
            domain=-1.5:1.5, y domain=-1.5:1.5,
            samples=30,
            width=7cm, height=7cm,
            axis lines=none,
            ticks=none,
            zmin=-0.3, zmax=0.1
        ]
          
        \addplot3[
            thick,
            domain=1:1.42,
            y domain=0:0,
            samples=100,
            opacity=0.8,
            orange,
        ] (
          {0}, 
          {x}, 
          {
            -0.02*x^2 +
            (x > 1)  * -2 * (x - 1)^3
          }
        );
          \addplot3[
            surf,
            shader=interp,
            draw=none,         
            colormap name=myblue,         
            opacity=0.6
            ]
            {-0.02*x^2 - 0.02*y^2};
          \addplot3[
            thick,
            domain=-2.8:1,
            y domain=0:0,
            samples=100,
            opacity=0.8,
            orange,
        ] (
          {0}, 
          {x}, 
          {
            -0.02*x^2 + 
            (x < -1) * 0.04 * (-x - 1)^3
          }
        );
        \end{axis}
      \end{tikzpicture}
       \captionof{subfig}{Clean \& Non-Transverse}
    \end{minipage}   
    \hfill
    \begin{minipage}[b]{0.48\textwidth}
      \centering
      \begin{tikzpicture}
        \begin{axis}[
            view={60}{30},
            axis lines=center,
            xlabel={$x$}, ylabel={$y$}, zlabel={$z$},
            xmin=-1, xmax=1,
            ymin=-2, ymax=2,
            zmin=-0.025, zmax=0.01,
            ticks=none
        ]
          \addplot3[
            domain=-1:1,
            y domain=0:0,
            samples=20,
            thick,
            red
          ]
          ({0}, {x}, {-0.02*x^2});
        \end{axis}
      \end{tikzpicture}
      \captionof{subfig}{Corresponding Intersection Set}
    \end{minipage}

    \vspace{0.5cm}

     \begin{minipage}[b]{0.48\textwidth}
    \centering
    \begin{tikzpicture}
      \begin{axis}[
        view={60}{20},
        domain=-2:2,
        y domain=-2:2,
        samples=30,
        axis lines=none,
        width=7cm, height=7cm,
        ticks=none,
      ]
        \addplot3[
          surf,
          shader=interp,
          opacity=0.8,
          colormap name=myorange
        ] {x^2+y^2};
        
        \addplot3[
          surf,
          shader=interp,
          opacity=0.6,
          colormap name=myblue
        ] {x^2+y^2+x^2*y};
      \end{axis}
    \end{tikzpicture}
    \captionof{subfig}{Non-Clean \& Non-Transverse}
    \label{subfig:surfaces}
  \end{minipage}
  \hfill
  \begin{minipage}[b]{0.48\textwidth}
    \centering
    \begin{tikzpicture}
      \begin{axis}[
        view={60}{30},
        axis lines=center,
        xlabel={$x$}, ylabel={$y$}, zlabel={$z$},
        width=7cm, height=7cm,
        xmin=-2, xmax=2,
        ymin=-2, ymax=2,
        zmin=-1, zmax=4,
        samples=20,
        ticks=none,
      ]
        \addplot3[
          domain=-2:2,
          y domain=0:0,
          samples=50,
          variable=\t,
          thick,
          blue
        ]
        ({0}, {\t}, {\t*\t});
        
        \addplot3[
          domain=-2:2,
          y domain=0:0,
          samples=50,
          variable=\t,
          thick,
          red
        ]
        ({\t}, {0}, {\t*\t});
      \end{axis}
    \end{tikzpicture}
   \captionof{subfig}{Corresponding Intersection Set}
    \label{subfig:intersections}
  \end{minipage}
  
    \caption*{Figure \thefigure: Visualisations of three types of surface intersections in $\R^3$. Left column shows intersecting surfaces, right column shows corresponding intersection curves. \textbf{Top (Transverse)}: The paraboloid $z = x^2 + y^2$ intersects the plane $z = 1 + x + y$ transversely, with distinct tangent planes at every point of intersection. \textbf{Middle (Clean, Non-Transverse)}: The surface $z = -0.02x^2 - 0.02y^2$ is intersected by the curve $\gamma(t) = (0,\ t,\ -0.02t^2 + \varphi(t))$, where $\varphi(t) = 0$ for $|t| \leq 1$, $\varphi(t) = 0.04(-t - 1)^3$ for $t < -1$, and $\varphi(t) = -2(t - 1)^3$ for $t > 1$. The curve lies exactly on the surface for $t \in [-1, 1]$, so the intersection is a smooth 1D submanifold and hence clean. However, outside this interval, the curve deviates from the surface, and the tangent vectors do not span $\R^3$, so the intersection is not transverse. \textbf{Bottom (Non-Clean, Non-Transverse)}: The surfaces $z = x^2 + y^2$ and $z = x^2 + y^2 + x^2 y$ intersect along $\{x = 0\} \cup \{y = 0\}$, forming two parabolic curves that meet at the origin. At the origin, there is a singularity, so the intersection is not a smooth submanifold, hence neither clean nor transverse.}
    \label{fig:transversal-clean-intersections-examples}
\end{figure}

\section{Experimental Validation of Results}
\label{appdx:experimental-results}

To validate our theoretical findings, we present three illustrative examples. These examples demonstrate the application and performance of our proposed reduction mapping method in different optimisation scenarios, from a simple 2D quadratic case to a more complex high-dimensional nonlinear problem.

\subsection{Example 1: 2D Quadratic Problem}

We begin with a simple two-dimensional quadratic function to visualise the trajectory of the optimisation process. It follows the first example of the Appendix~\ref{appdx:a-gentle-start}, where the objective function is given by,
\[
G(x,y) = x^2 + 10 (y-x)^2\,.
\]
The reduction mapping, which constrains the optimisation to the manifold where $y=x$, is defined as,
\[
\Phi(x) = \begin{pmatrix}
x \\
x
\end{pmatrix}\,.
\]
The reduced function is $F(x) = x^2$, with Hessian $\nabla^2 F(x) = 2$. In this case, the pullback metric tensor $R = D\Phi^\top D\Phi = 2$ is identical to the Hessian. Consequently, our geometrically preconditioned gradient descent method recovers the full Newton method. As expected for a quadratic function, this achieves convergence in a single step. Figure \ref{fig:trajectories} illustrates the optimisation paths on the contour plot of the function. We can observe that the reduced method directly follows the valley of the function, leading to faster convergence. The learning rates for each method are set based on the smoothness constant of the function, \ie, $\eta = 1/\beta$.

\begin{figure}[h]
    \centering
    \includegraphics[width=0.7\linewidth]{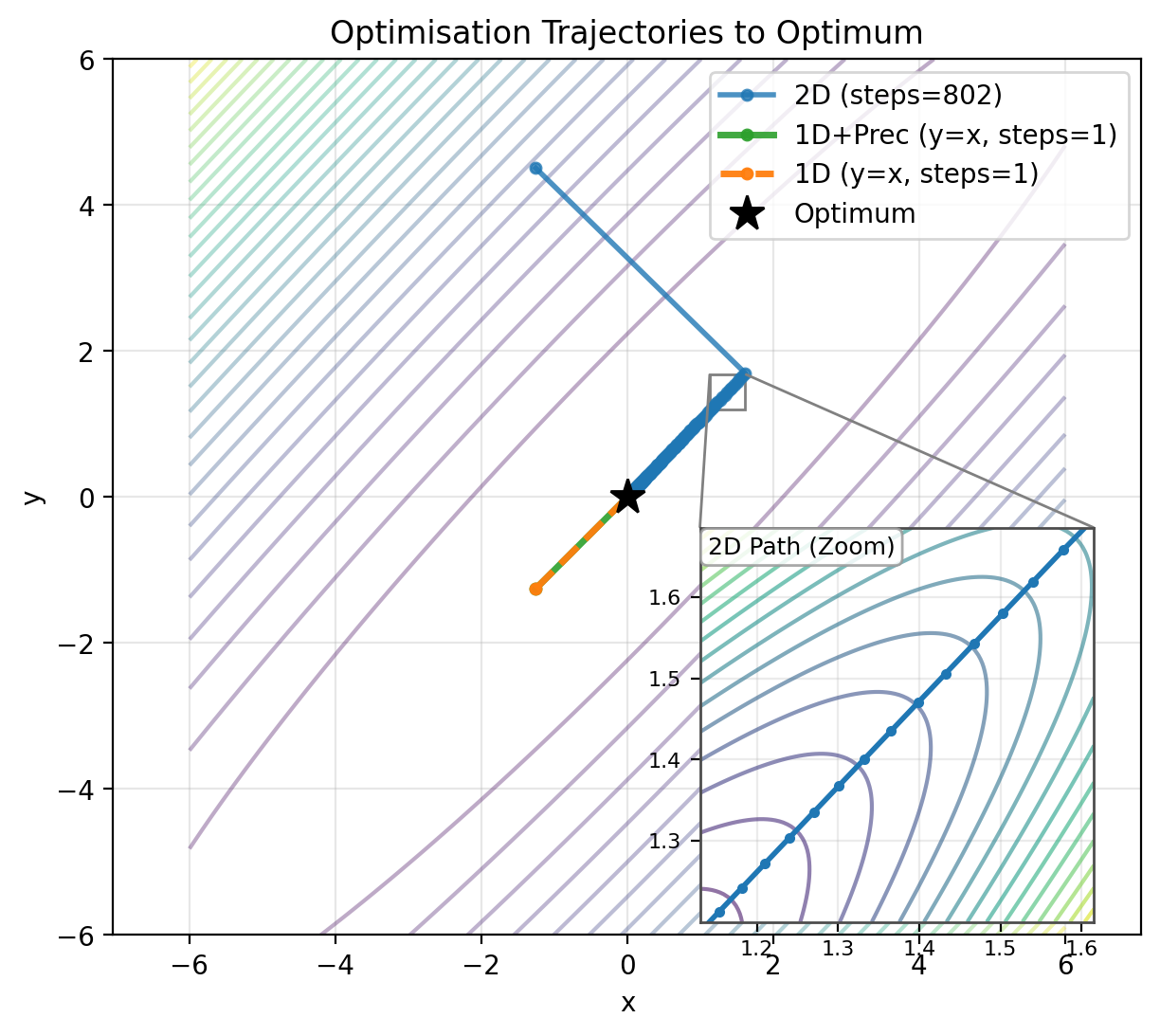}
    \caption{Optimisation trajectories for the 2D quadratic problem.}
    \label{fig:trajectories}
\end{figure}

\subsection{Example 2: High-Dimensional Quadratic Problem}

Next, we consider a high-dimensional quadratic problem ($n=40$). The objective function is,
\[
G(x,y) = \frac{1}{2}\|x\|^2 + \frac{1}{2} \|y\|^2 + \frac{\lambda}{2}\|y - Kx\|^2\,,
\]
where $x \in \mathbb{R}^n$, $y \in \mathbb{R}^n$, and $K$ is a linear operator. We also set the parameter $\lambda$ equal to ten and keep it fixed during training. The reduction mapping is given by the manifold where $y=Kx$:
\[
\Phi(x) = \begin{pmatrix}
x \\
Kx
\end{pmatrix}\,.
\]
The gradient and Hessian of the original function are,
\[
\nabla G(x,y) = \begin{bmatrix}
  x + \lambda K^\top(Kx - y) \\
  y + \lambda (y - Kx)
\end{bmatrix}, \quad \text{and} \quad \nabla^2 G(x,y) = \begin{bmatrix}
  I + \lambda K^\top K & -\lambda K^\top \\
  -\lambda K & (1 + \lambda)I
\end{bmatrix}\,.
\]
The reduced function $F(x) = G(\Phi(x))$ simplifies to,
\[
F(x) = \frac{1}{2}\|x\|^2 + \frac{1}{2} \|Kx\|^2\,.
\]
The gradient and Hessian of the reduced function are,
\[
\nabla F(x) = x + K^\top Kx, \quad \text{and} \quad \nabla^2 F(x) = I + K^\top K\,.
\]

For this linear least-squares problem, the Hessian of the reduced function is identical to the pullback metric $R = D\Phi^\top D\Phi = I + K^\top K$. As in the first example, our geometrically preconditioned gradient descent is equivalent to the full Newton method applied to the reduced function, which converges in a single iteration. The experimental results for this case are summarised in Figures \ref{fig:conv_quad}, \ref{fig:hess_quad}, and, \ref{fig:wall_quad}. The learning rates for each method are set based on Armijo's backtracking line search.

\begin{figure}[h!]
    \centering
    \includegraphics[width=\linewidth]{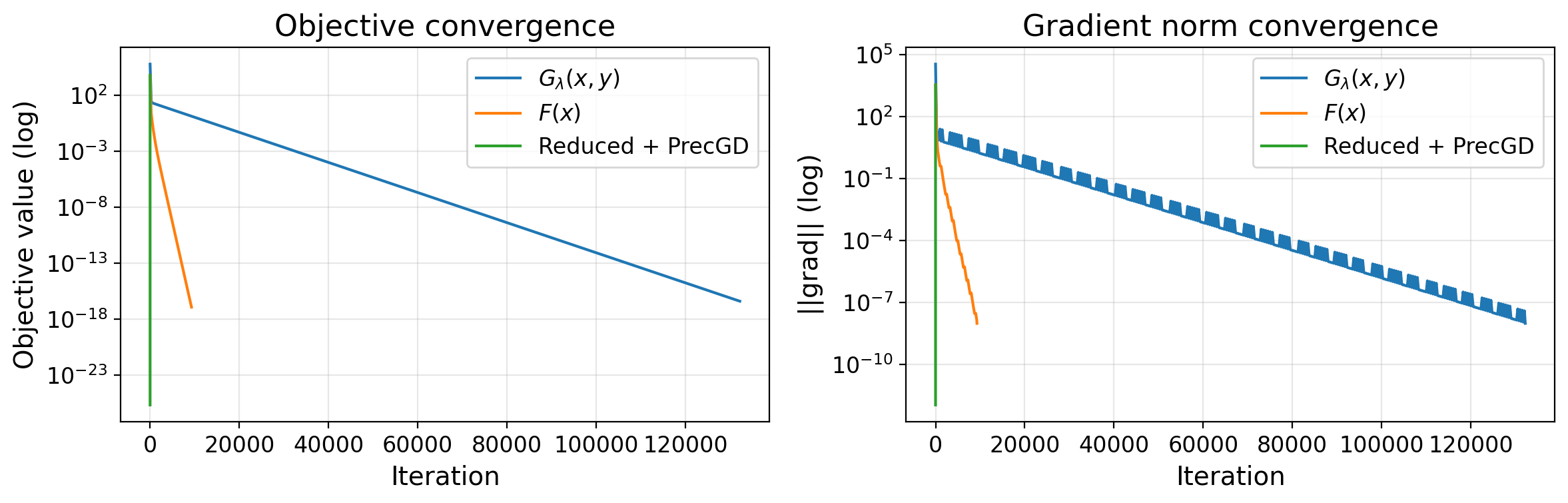}
    \caption{Convergence rate for the high-dimensional quadratic problem.}
    \label{fig:conv_quad}
\end{figure}

\begin{figure}[h!]
    \centering
    \includegraphics[width=\linewidth]{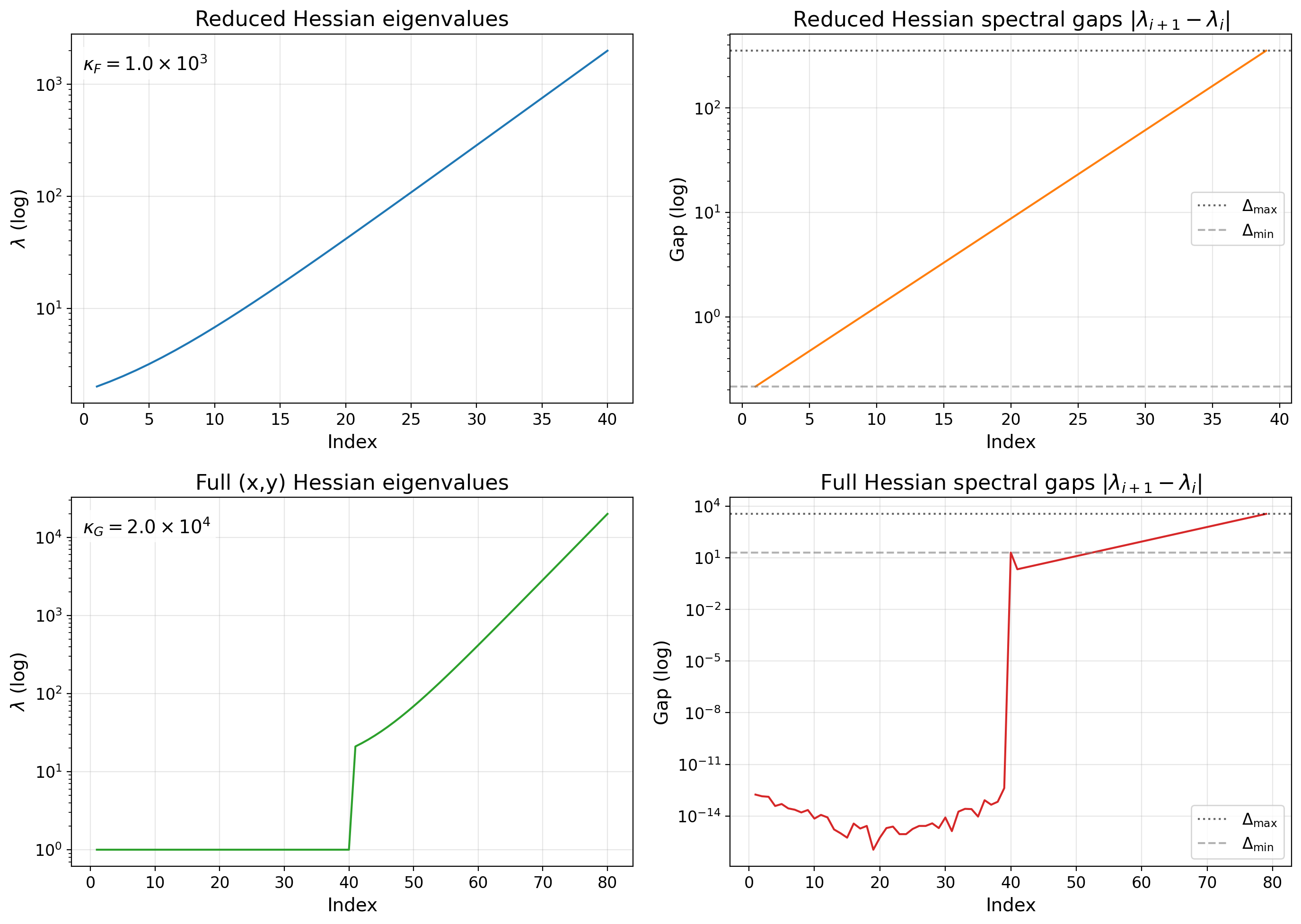}
    \caption{Hessian eigenspectrum for the high-dimensional quadratic problem.}
    \label{fig:hess_quad}
\end{figure}

\begin{figure}[h!]
    \centering
    \includegraphics[width=0.7\linewidth]{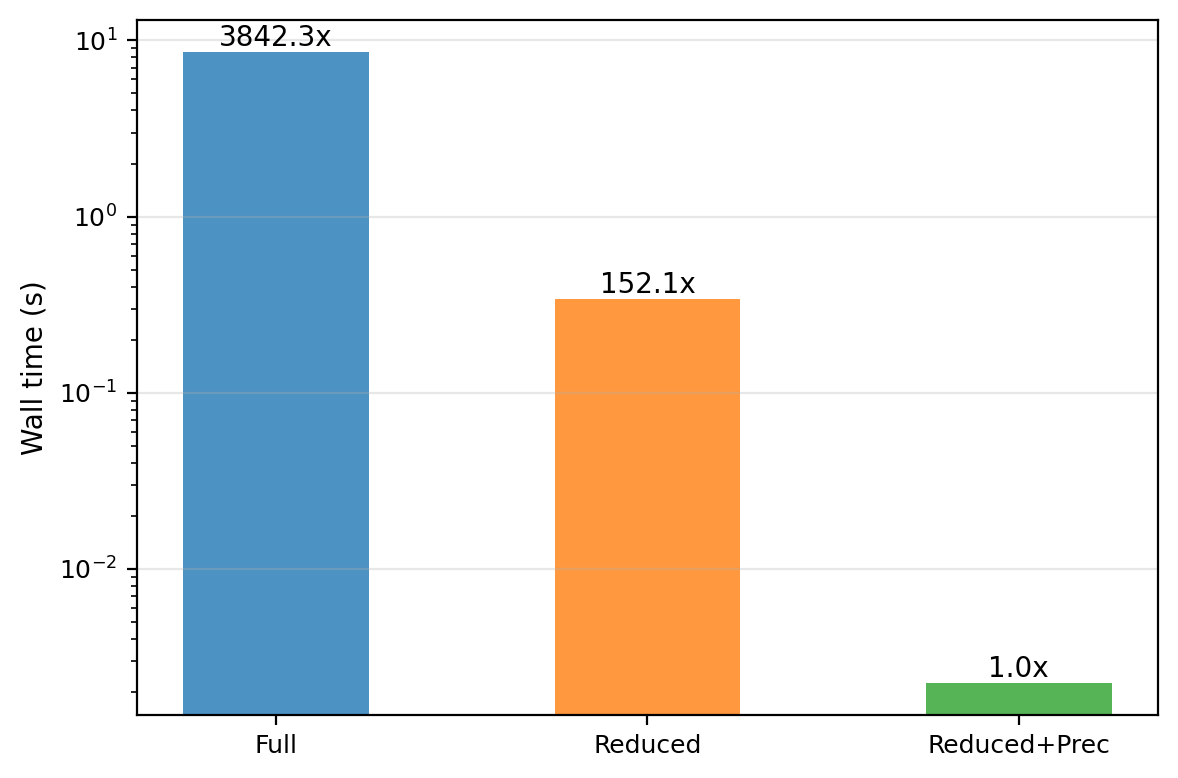}
    \caption{Wall-clock time for the high-dimensional quadratic problem. This figure compares the wall-clock time required to reach convergence.}
    \label{fig:wall_quad}
\end{figure}

\subsection{Example 3: High-Dimensional Nonlinear Problem}

Finally, we address a high-dimensional nonlinear problem ($n=40$), with the same settings as before. The objective function is,
\[
G(x,y) = \frac{1}{2}\|x\|^2 + \frac{1}{2} \|y\|^2 + \frac{\lambda}{2}\|y - \alpha\tanh(Kx)\|^2\,.
\]
First, let us define intermediate variables,
\[  
u = Kx, \quad t = \tanh(u), \quad s = \text{sech}^2(u) = 1 - t^2, \quad r = y - \alpha t\,.
\]
The gradient is,
\[
\nabla G(x,y) = \begin{bmatrix}
  x - \lambda \alpha K^\top (\text{Diag}(s) r) \\
  y + \lambda r
\end{bmatrix}\,.
\]
The Hessian is,
\[
\nabla^2 G(x,y) = \begin{bmatrix}
  \nabla^2_{xx} G(x,y) & \nabla^2_{xy} G(x,y) \\
  \nabla^2_{yx} G(x,y) & \nabla^2_{yy} G(x,y)
\end{bmatrix}\,,
\]
where,
\begin{align*}
  \nabla^2_{xx} G(x,y) &= I + \lambda \alpha^2 K^\top \text{Diag}(s)^2 K + \lambda\alpha K^\top \text{Diag}(g) K \\
  \nabla^2_{xy} G(x,y) &= -\lambda \alpha K^\top \text{Diag}(s) \\
  \nabla^2_{yx} G(x,y) &= -\lambda \alpha \text{Diag}(s) K \\
  \nabla^2_{yy} G(x,y) &= (1 + \lambda)I\,.
\end{align*}

In the $\nabla^2_{xx}G$ block, $g$ is a vector defined as $g = 2r \odot s \odot t$.

The reduction mapping is defined by the nonlinear manifold $y=\alpha\tanh(Kx)$,
\[
\Phi(x) = \begin{pmatrix}
x \\
\alpha\tanh(Kx)
\end{pmatrix}\,.
\]
The reduced function is,
\[
F(x) = \frac{1}{2}\|x\|^2 + \frac{1}{2} \|\alpha\tanh(Kx)\|^2\,.
\]
This problem structure, minimising $\frac{1}{2}\|\Phi(x)\|^2$, is a nonlinear least-squares problem. The gradient and Hessian of the reduced function are given below,
\[
\nabla F(x) = x + \alpha^2 K^\top\mathrm{Diag}(s)t \quad\text{and}\quad \nabla^2 F(x) = I + \alpha^2 K^\top\mathrm{Diag}(g) K\,,
\]
where $g = s \odot s - 2 t \odot t \odot s$.
The pullback metric $R = D\Phi^\top D\Phi$ is given by,
\[
R = I + \alpha^2 K^\top \mathrm{Diag}(s^2) K\,.
\]
In this setting, our geometrically preconditioned gradient descent, which uses $R$ as the preconditioner, is precisely equivalent to the Gauss-Newton method applied to the reduced function. This is a special case as the the function is a nonlinear least-squares problem. The experimental results for the nonlinear case are presented in Figures \ref{fig:conv_nonlinear}, \ref{fig:hess_nonlinear}, and, \ref{fig:wall_nonlinear}. The learning rates for each method are set based on Armijo's backtracking line search.

\begin{figure}[h!]
    \centering
    \includegraphics[width=\linewidth]{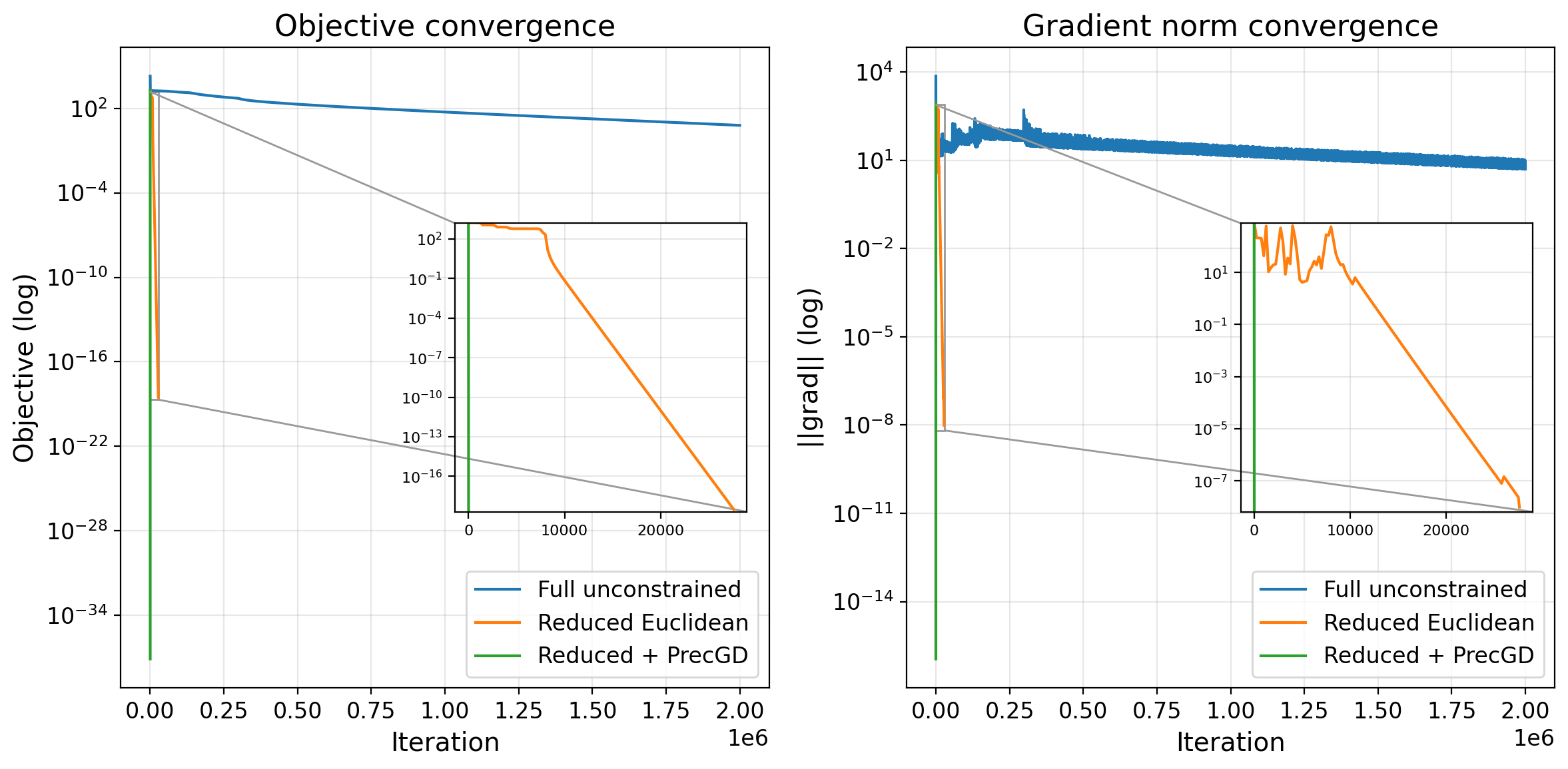}
    \caption{Convergence rate for the high-dimensional nonlinear problem.}
    \label{fig:conv_nonlinear}
\end{figure}

\begin{figure}[h!]
    \centering
    \includegraphics[width=\linewidth]{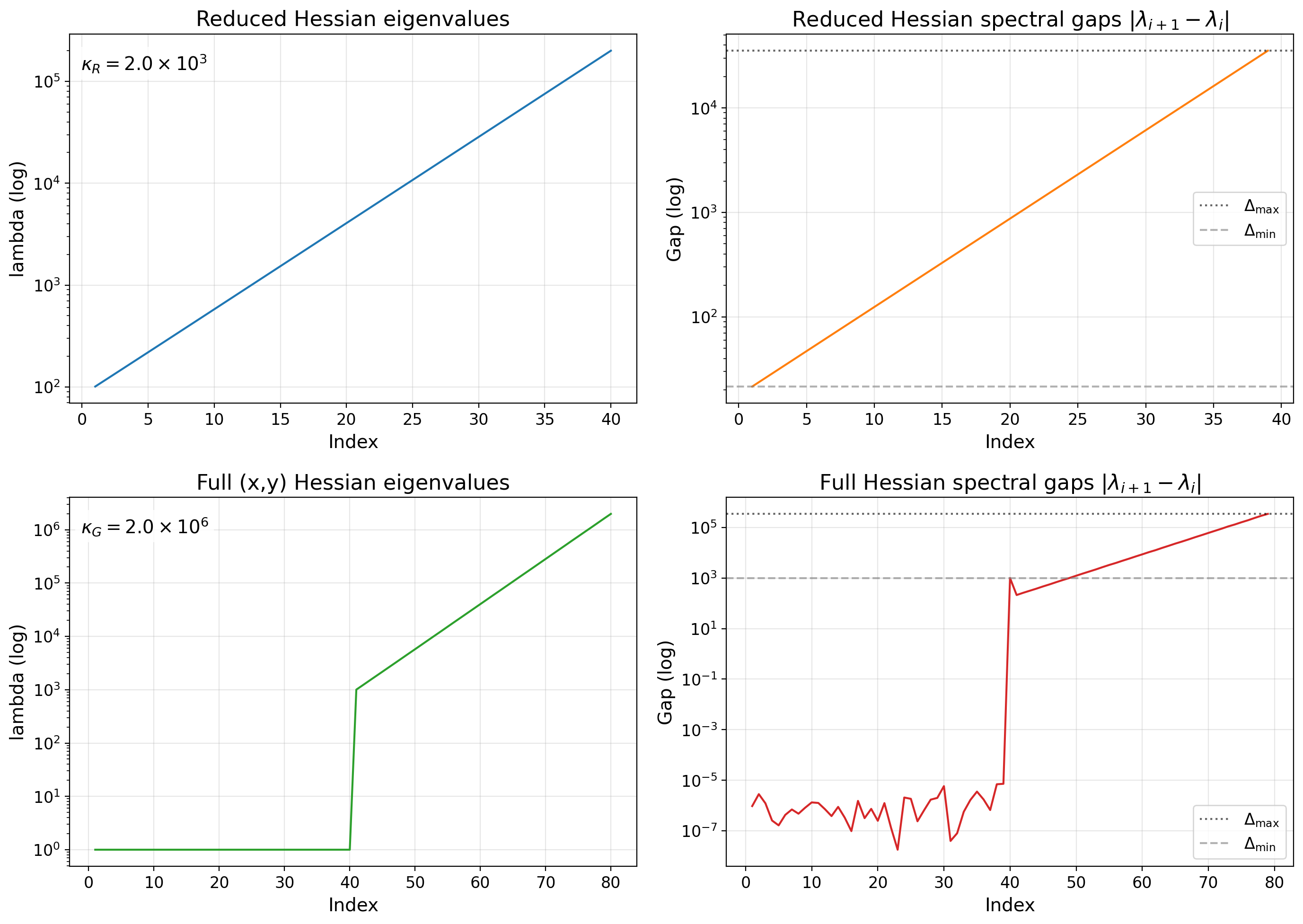}
    \caption{Hessian eigenspectrum for the high-dimensional nonlinear problem.}
    \label{fig:hess_nonlinear}
\end{figure}

\begin{figure}[h!]
    \centering
    \includegraphics[width=0.7\linewidth]{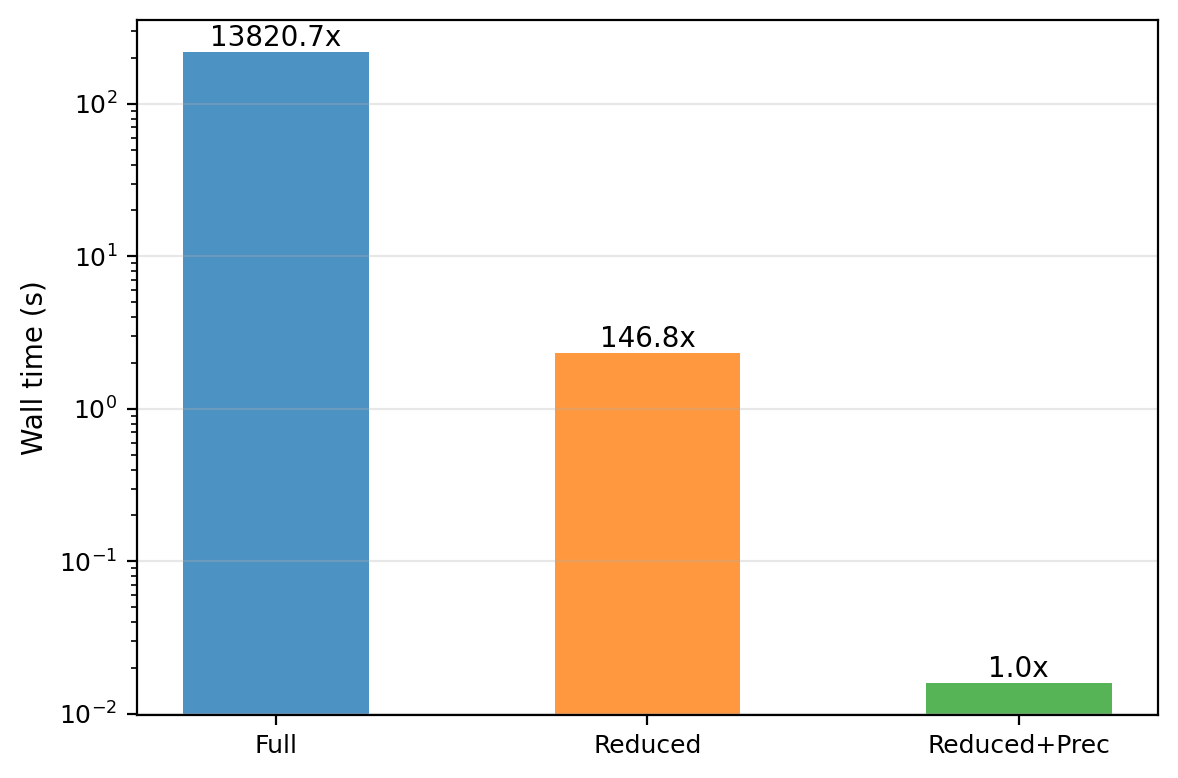}
    \caption{Wall-clock time for the high-dimensional nonlinear problem, comparing the time to convergence. Note that the Reduced+PrecGD wall-clock time also includes the conjugate gradient steps needed to solve for the preconditioning at each iteration.}
    \label{fig:wall_nonlinear}
\end{figure}

These examples demonstrate the flexibility of our proposed framework. While it is general enough to apply to any objective function $G$ and mapping $\Phi$, these experiments show that for common problem classes, it naturally recovers well-known, powerful optimisation algorithms such as the Newton and Gauss-Newton methods.


\clearpage
\section*{NeurIPS Paper Checklist}

\begin{enumerate}

\item {\bf Claims}
    \item[] Question: Do the main claims made in the abstract and introduction accurately reflect the paper's contributions and scope?
    \item[] Answer: \answerYes{} 
    \item[] Justification: We analysed the impact of reduction mappings on local convergence rates in nonconvex landscapes. To support our analysis, we derived a number of theoretical results.

\item {\bf Limitations}
    \item[] Question: Does the paper discuss the limitations of the work performed by the authors?
    \item[] Answer: \answerYes{} 
    \item[] Justification: We discuss our limitations of our analysis in Section 4.1 of the paper.

\item {\bf Theory assumptions and proofs}
    \item[] Question: For each theoretical result, does the paper provide the full set of assumptions and a complete (and correct) proof?
    \item[] Answer: \answerYes{} 
    \item[] Justification: For all our theoretical results, we have provide full proofs in the appendix and further proof sketches and analysis in the main paper. All assumptions are stated explicitly throughout the paper.

    \item {\bf Experimental result reproducibility}
    \item[] Question: Does the paper fully disclose all the information needed to reproduce the main experimental results of the paper to the extent that it affects the main claims and/or conclusions of the paper (regardless of whether the code and data are provided or not)?
    \item[] Answer: \answerYes{} 
    \item[] Justification: The paper is mainly a theoretical work but we have include some synthetic experiments for validation.

\item {\bf Open access to data and code}
    \item[] Question: Does the paper provide open access to the data and code, with sufficient instructions to faithfully reproduce the main experimental results, as described in supplemental material?
    \item[] Answer: \answerYes{} 
    \item[] Justification: We provide sufficient instructions to reproduce the results.

\item {\bf Experimental setting/details}
    \item[] Question: Does the paper specify all the training and test details (e.g., data splits, hyperparameters, how they were chosen, type of optimizer, etc.) necessary to understand the results?
    \item[] Answer: \answerYes{} 
    \item[] Justification: We outline our experimental setup in Appendix G.

\item {\bf Experiment statistical significance}
    \item[] Question: Does the paper report error bars suitably and correctly defined or other appropriate information about the statistical significance of the experiments?
    \item[] Answer: \answerYes{} 
    \item[] Justification: Experiments were run for multiple seeds.

\item {\bf Experiments compute resources}
    \item[] Question: For each experiment, does the paper provide sufficient information on the computer resources (type of compute workers, memory, time of execution) needed to reproduce the experiments?
    \item[] Answer: \answerYes{} 
    \item[] Justification: We outline our experimental setup in Appendix G.
    
\item {\bf Code of ethics}
    \item[] Question: Does the research conducted in the paper conform, in every respect, with the NeurIPS Code of Ethics \url{https://neurips.cc/public/EthicsGuidelines}?
    \item[] Answer: \answerYes{} 
    \item[] Justification: The paper conforms with the NeurIPS code of ethics.

\item {\bf Broader impacts}
    \item[] Question: Does the paper discuss both potential positive societal impacts and negative societal impacts of the work performed?
    \item[] Answer: \answerNA{} 
    \item[] Justification: There is no societal impact of the work performed in this paper.

\item {\bf Safeguards}
    \item[] Question: Does the paper describe safeguards that have been put in place for responsible release of data or models that have a high risk for misuse (e.g., pretrained language models, image generators, or scraped datasets)?
    \item[] Answer: \answerNA{} 
    \item[] Justification: The paper does not pose such risks.

\item {\bf Licenses for existing assets}
    \item[] Question: Are the creators or original owners of assets (e.g., code, data, models), used in the paper, properly credited and are the license and terms of use explicitly mentioned and properly respected?
    \item[] Answer: \answerNA{} 
    \item[] Justification: The paper does not use existing assets.

\item {\bf New assets}
    \item[] Question: Are new assets introduced in the paper well documented and is the documentation provided alongside the assets?
    \item[] Answer: \answerNA{} 
    \item[] Justification: The paper does not release new assets.

\item {\bf Crowdsourcing and research with human subjects}
    \item[] Question: For crowdsourcing experiments and research with human subjects, does the paper include the full text of instructions given to participants and screenshots, if applicable, as well as details about compensation (if any)? 
    \item[] Answer: \answerNA{} 
    \item[] Justification: The paper does not involve crowdsourcing nor research with human subjects.

\item {\bf Institutional review board (IRB) approvals or equivalent for research with human subjects}
    \item[] Question: Does the paper describe potential risks incurred by study participants, whether such risks were disclosed to the subjects, and whether Institutional Review Board (IRB) approvals (or an equivalent approval/review based on the requirements of your country or institution) were obtained?
    \item[] Answer: \answerNA{} 
    \item[] Justification: The paper does not involve crowdsourcing nor research with human subjects.

\item {\bf Declaration of LLM usage}
    \item[] Question: Does the paper describe the usage of LLMs if it is an important, original, or non-standard component of the core methods in this research? Note that if the LLM is used only for writing, editing, or formatting purposes and does not impact the core methodology, scientific rigorousness, or originality of the research, declaration is not required.
    \item[] Answer: \answerNA{} 
    \item[] Justification: The core method development in this paper does not involve LLM usage.

\end{enumerate}

\end{document}